\documentclass[reqno]{amsart}
\usepackage{amsmath, amsfonts, amssymb, amsthm, mathtools}

\usepackage{amscd}
\usepackage{bbm}
\usepackage{enumerate}
\usepackage{galois}
\usepackage{mathrsfs}
\usepackage{xypic}
\usepackage{geometry}
\usepackage{hyperref}
\usepackage{mathrsfs}
\usepackage{rotating}
\usepackage{cite}

\usepackage{color}

\theoremstyle{plain}
\newtheorem{thm}{Theorem}[section]
\newtheorem{cor}[thm]{Corollary}
\newtheorem{lem}[thm]{Lemma}
\newtheorem{prop}[thm]{Proposition}

\newtheorem{ex}{Example}[section]
\newtheorem{rem}{Remark}[section]

\def\pdf#1#2{\frac{\partial #1}{\partial #2}}

\def\tangentvector#1{\pdf{}{#1}}
\def\dif{\mathrm{d}}

\numberwithin{equation}{section}

\def\res{\mathop{\mathrm{Res\,}}}

\newcommand{\p}{\partial}

\newcommand{\al}{\alpha}

\newcommand{\dd}{\mathrm{d}}
\newcommand{\lm}{\lambda}

\newcommand{\bfx}{\mathbf{x}}

\begin{document}
	
	\title[Affine Weyl Groups and Generalized Frobenius Manifolds]{Generalized Frobenius Manifold Structures on the Orbit Spaces of Affine Weyl Groups II}
	
	\author{Lingrui Jiang, Si-qi Liu, Yingchao Tian, Youjin Zhang}
	\keywords{Generalized Frobenius manifolds, Affine Weyl groups, Flat coordinates, Root systems}
	
	\begin{abstract}
		This is a sequel to \cite{JTZ2025-1}, in which an approach to construct a class of generalized Frobenius manifold structures on the orbit spaces of affine Weyl groups is presented. In this paper we apply this construction to the affine Weyl groups of type $A_\ell, B_\ell, C_\ell$ and $D_\ell$.
	\end{abstract}

	\date{\today}
	
	\maketitle
	\tableofcontents
	
	\section{Introduction}\label{intro}
In \cite{JTZ2025-1}, we introduced an approach to construct a class of generalized Frobenius manifold structures on the orbit spaces of affine Weyl groups. As illustrative examples, we applied this construction to the affine Weyl groups of types $A_1, A_2, A_3, B_3, C_3, D_4$ and $G_2$. In the present paper, we extend this construction to the affine Weyl groups of types $A_\ell, B_\ell, C_\ell$ and $D_\ell$, thereby obtaining further examples of generalized Frobenius manifold structures. We begin by recalling the basic framework of this construction.

Let $R$ be an irreducible reduced root system in an $\ell$-dimensional Euclidean space with inner product $(\cdot\,,\cdot)$, and $\al_1,\dots,\al_\ell$ be a basis of simple roots. Denote by $\alpha_1^\vee,\dots, \alpha_\ell^\vee$ and $\omega_1,\dots,\omega_\ell$ the corresponding coroots and fundamental weights of $R$. We fix a weight 
\begin{align}
		\label{omega = sum mr omegar}
		\omega = \sum_{j=1}^{\ell} m_j \omega_j,\quad  m_j \in \mathbb Z_{\ge 0}
	\end{align}
of $R$, and introduce affine coordinates $(x^1,\dots,x^\ell; c)$ on $V$ by
	\begin{align}
		\label{x=c omega+}
		\bfx = c\omega + x^1\alpha_1^\vee + \dots + x^\ell\alpha_\ell^\vee,
	\end{align}
	where $c\in\mathbb{R}$ is a fixed parameter. Denote by $W(R)$ 
	and $W_a(R)$ respectively the Weyl group and the affine Weyl group associated with the root system $R$. The action of $W_a(R)$ on $V$ given by
\begin{align}
		\label{linear trans}
		\sigma\left(c\omega + x^1\alpha_1^\vee + \dots + x^\ell\alpha_\ell^\vee\right) =c\omega + \tilde x^1\alpha_1^\vee + \dots + \tilde x^\ell\alpha_\ell^\vee,\quad \sigma\in W_a(R)
	\end{align}	
yields the change of affine coordinates
\[\sigma(x^1,\dots,x^\ell; c)=(\tilde x^1,\dots, \tilde x^\ell; c),\quad \sigma\in W_a(R).\] 
This change of coordinates induces a right action on the Fourier function ring     
\[\mathscr F=\mathrm{span}_\mathbb{C}\left\{\mathrm{e}^{2\pi i(t_0c+t_1x^1+\dots+t_\ell x^\ell)}\mid
    t_0,t_1,\dots,t_\ell\in\mathbb{R}\right\},\]
which has a gradation defined by
\begin{align}
		\deg \mathrm{e}^{2\pi ix^j} = \theta_j,\quad  \deg \mathrm{e}^{2\pi i c} = -1,
	\end{align}
where $\theta_j=(\omega_j, \omega)$. It is shown in \cite{JTZ2025-1} that the $W_a(R)$ action on $\mathscr F$ preserves the degrees of monomials. We denote by $\mathscr F^W(R)$ the invariant subring of $\mathscr F$ \textit{w.r.t.} this action.

We introduce a parameter 
\begin{align}\label{zzh-2}
		\lambda = \mathrm{e}^{-2\pi i\kappa c}\ \textrm{with}\  \kappa = \gcd\{(\omega, \alpha_r)\mid r = 1,\dots,\ell\},
	\end{align}
and define the $\lambda$-Fourier polynomial ring $\mathscr A$ as a subring of $\mathscr F$ by
	\begin{align*}
		\mathscr A = \mathbb C[\lambda] \otimes \mathbb C\bigl[\mathrm{e}^{2\pi i x^j}, \mathrm{e}^{-2\pi i x^j}\mid j = 1,\dots,\ell\bigr] \subset\mathscr F.
	\end{align*}
Denote by $\mathscr A^W = \mathscr A \cap \mathscr F^W$ the $W_a(R)$-invariant $\lambda$-Fourier polynomial ring, then we know from \cite{JTZ2025-1}
that $\mathscr A^W = \mathbb C[y^1,\dots,y^\ell; \lambda]$,
where 
\begin{align}
		\label{yj definition}
		y^j(\bfx) = \mathrm{e}^{-2\pi i\theta_j c}Y_j(\bfx)
		= \frac 1{N_j} \mathrm{e}^{-2\pi i\theta_j c}\sum_{w\in W(R)} e^{2\pi i(\omega_j,  w(\bfx))}, 
		~j = 1,\dots,\ell,
	\end{align}
and $N_j = \#\{w\in W(R)\mid w(\omega_j) = \omega_j\}$.
The $W_a(R)$-invariant $\lambda$-Fourier polynomials $y^1,\dots, y^\ell$ are called basic generators of $\mathscr A^W$, and they are quasi-homogeneous of $\deg y^j=\theta_j$. 
 
In order to construct generalized Frobenius manifold structures on the orbit space of the affine Weyl group $W_a(R)$, we introduce in \cite{JTZ2025-1} the notion of proper generators of $\mathscr A^W$. 
Let $z^1,\dots,z^\ell \in \mathscr A^W$. 
We call $\{z^1,\dots,z^\ell\}$ a set of proper generators of $\mathscr A^W$ if $z^j\in \mathscr A^W$ with $\deg z^j = \theta_j$ for $j=1,\dots,\ell$, and
		\begin{align}
			\label{proper gen defn}
			z^j\big|_{\lambda=0}=y^j\big|_{\lambda=0},\quad j=1,\dots,\ell.
		\end{align}
In other words, the quasi-homogeneous $W_a(R)$-invariant $\lambda$-Fourier polynomials 
$z^1,\dots,z^\ell$ form a set of  proper generators of $\mathscr A^W$ if and only if for any $1\le j\le \ell$, there exists a polynomial $s^j$ of $\lambda$ and elements of $\{y^r\mid 1\le r\le\ell,\,\theta_r < \theta_j\}$, such that either $s^j=0$ or $\deg s^j = \theta_j - \kappa$, and 
		\begin{align}
			\label{proper gen cond}
			z^j = y^j + \lambda s^j.
		\end{align}
Note that the parameter $\lambda$ of the $W_a(R)$-invariant $\lambda$-Fourier polynomials $z^1,\dots,z^\ell$ is defined by \eqref{zzh-2} and it lies on the unit circle. However, since $z^j$ depend polynomially on $\lambda$, the definition of $z^j$ can be extended naturally to any $\lambda \in \mathbb C$, including $\lambda = 0$.

Given a set $\{z^1,\dots,z^\ell\}$ of proper generators of $\mathscr A^W$, we consider the orbit space 
\[\mathcal{M}:=\mathcal{M}(R,\omega)\cong\mathbb{C}^\ell\] 
of the affine Weyl group $W_a(R)$. For each parameter $\lambda\in\mathbb{C}$, the $\varphi_\lambda$-transformation 
\[\varphi_\lambda\colon (x^1,\dots,x^\ell)\mapsto (z^1,\dots,z^\ell)\] 
induces a pushing forward $(\varphi_\lambda)_*$,  which transforms the contravariant metric 
\begin{equation}\label{zh-07-17-5}
a(\dif x^i,\dif x^j)=a^{ij}\ \mathrm{with}\ (a^{ij})=\big((\alpha_i^\vee,\alpha_j^\vee)\big)^{-1},\quad i,j=1,\dots,\ell
\end{equation}
on $V\otimes\mathbb{C}$ to a contravariant metric $(g_\lambda^{ij})$ on $\mathcal M$.  By definition, 
	\begin{align}
		g_\lambda^{ij} = \frac 1{4\pi^2}\sum_{r,s=1}^\ell \pdf{z^i}{x^r}a^{rs}\pdf{z^j}{x^s}, \quad i, j = 1, \dots, \ell. \label{zh-07-11-1}
	\end{align}
	Here a factor $1/4\pi^2$ is introduced to simplify the expressions of $g_\lambda^{ij}$. 
	
We say that $\{z^1,\dots,z^\ell\}$ is a set of \emph{pencil generators} of $\mathscr A^W$ if the associated metric $g_\lambda=\bigl(g^{ij}_\lambda\bigr)$ depends	linearly on $\lambda$, i.e., $g^{ij}_\lambda$ can be represented in the form	
	\[g^{ij}_\lambda = g^{ij} + \lambda \eta^{ij},\] 
where $\eta=(\eta^{ij})$ is non-degenerate at generic points of $\mathcal{M}$,  and the 
Christoﬀel symbols $\Gamma^{ij}_{g_\lambda,k}=-g_{\lambda}^{ir}\Gamma_{g_{\lambda},rk}^j$ of the Levi-Civita connection of the contravariant metric $g_\lambda$ can be represented in terms of that of $g$ and $\eta$ as follows:
\begin{equation}\label{zzh-7}
\Gamma^{ij}_{g_{\lambda},k}=\Gamma^{ij}_{g,k}+\lambda \Gamma^{ij}_{\eta,k}.
\end{equation}
We know from \cite{JTZ2025-1} that for any set of pencil generators, the 
functions $g_\lambda^{ij}$ and $\Gamma^{ij}_{g_\lambda,k}$ are quasi-homogeneous polynomials of $z^1,\dots, z^\ell$ and $\lambda$ of degree
$\theta_i+\theta_j$ and $\theta_i+\theta_j-\theta_k$ respectively.

For a given set $\{z^1,\dots,z^\ell\}$ of pencil generators of $\mathscr A^W$, we consider the vector field
\begin{align}
		\label{Euler}
		E = \sum_{r=1}^{\ell}\theta_r z^r \pdf{}{z^r}.
	\end{align}
Denote
\begin{equation}\label{zh-05-25a}
	D_\eta = \{z \in \mathcal M~\big|~\det\eta(z) = 0\}.
	\end{equation}
Then $\eta=(\eta^{ij})$ induces a flat metric $(\eta_{ij})=(\eta^{ij})^{-1}$
on $\mathcal M\setminus D_\eta$, which we also denote by $\eta$.
We assume that we can choose a system of flat coordinates $t^1,\dots, t^\ell$ of the flat metric $\eta$, such that in these coordinates the vector field $E$ has the form
\begin{align}
		E = \sum_{\al, r=1}^\ell \theta_r z^r \pdf{t^\alpha}{z^r}\tangentvector{t^\alpha} = \sum_{\alpha=1}^\ell d_\alpha t^\alpha\tangentvector{t^\alpha},
	\end{align}
where $d_1,\dots,d_\ell$ are some real numbers. In such a case we call the vector field $E$ is diagonalizable. It is shown in  \cite{JTZ2025-1} that $d_\al$ must be positive numbers. 

Let us denote
\begin{align}\label{zh-07-20-1}
	D_0 = \{z \in \mathcal M \, |\, \exists\, i\in S,\, z^i = 0\},\quad D = D_\eta \cup D_0,
\end{align}
and $\mathcal M_D = \mathcal M \setminus D$, where 
	\begin{equation}\label{zzh-4}
		S=\{r\in\{1,\dots,\ell\}\mid m_r=(\omega, \alpha_r^\vee) > 0\},
	\end{equation}
then $\mathcal M_D$ is a dense open subset of $\mathcal M$. Let  $\Gamma^{\alpha\beta}_\gamma$ be the contravariant components of the Levi-Civita connection of $g$ in the flat coordinates $t^1,\dots, t^\ell$ of $\eta$. It is shown in 
\cite{JTZ2025-1} that one can define a Frobenius algebra structure 
on $T\left(\mathcal M_D\right)$ with the bilinear form $\langle\cdot\,, \cdot\rangle$ defined by the flat metric $\eta$, the multiplication
\begin{equation}\label{zh-07-17-1}
	\frac{\p}{\p t^\al}\cdot\frac{\p}{\p t^\beta}=c_{\al\beta}^\gamma\frac{\p}{\p t^\gamma},
\end{equation}
and the unit vector field
\begin{align}
	\label{unit}
	e=\eta^\sharp(\omega_e) = -\frac 1\kappa\mathrm{grad}_\eta \sum_{r=1}^{\ell}m_r \log z^r.
\end{align}
Here $m_r$ are defined in \eqref{omega = sum mr omegar},
\begin{equation}\label{zh-06-02e}
	c_{\al\beta}^\gamma=\frac{\kappa}{d_\rho}\eta_{\al\nu}
	\eta_{\beta\rho}\eta^{\gamma\zeta}\Gamma^{\nu\rho}_\zeta,\quad \omega_e = -\sum_{r=1}^{\ell}\frac1{\kappa}m_r \dif\log z^r,
\end{equation}
and $\eta^\sharp$ is the isomorphism
\begin{align}
	\eta^\sharp\colon T^*\mathcal M_D \to T^{**}\mathcal M_D = T\mathcal M_D,\quad \omega \mapsto \eta(\omega,\cdot).
\end{align}
We also assume here and in what follows summations over repeated upper and lower Greek indices.

\begin{thm}[\cite{JTZ2025-1}]
		\label{main}
		Suppose $\{z^1,\dots,z^\ell\}$ be a set of pencil generators associated with an irreducible reduced root system $R$ and a fixed weight $\omega$, and the vector field $E$ given by \eqref{Euler} is diagonalizable, then there exists a generalized Frobenius manifold structure of charge $d=1$ on $\mathcal M_D$, of which the flat metric is given by $\eta$ and the multiplication is defined by \eqref{zh-07-17-1}; moreover, 
the unit vector field $e$ is defined by\eqref{unit}, the Euler vector field $\tilde E=\frac1{\kappa} E$, and the intersection form coincides with $g$.
	\end{thm}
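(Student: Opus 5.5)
The plan follows Dubrovin's argument for Frobenius structures on orbit spaces of Coxeter groups, adapted to the flat pencil $g_\lambda=g+\lambda\eta$.

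\textbf{Step 1: flat pencil and the potential.} By the pencil-generator hypothesis, $g_\lambda$ is the push-forward of the constant metric $a$ for every $\lambda$, so it is flat. Together with \eqref{zzh-7}, this makes $(g,\eta)$ a flat pencil on $\mathcal M\setminus D_\eta$. In the flat coordinates $t^\alpha$ of $\eta$, the vanishing of the curvature of $g+\lambda\eta$ for all $\lambda$ gives the standard consequence $\Gamma^{\alpha\beta}_\gamma=\eta^{\alpha\sigma}\partial_\sigma\partial_\gamma f^\beta$ for some vector potential $f^\beta$. Quasi-homogeneity then fixes the scaling. Since $g^{ij}_\lambda$ and $\Gamma^{ij}_{g_\lambda,k}$ are quasi-homogeneous of degrees $\theta_i+\theta_j$ and $\theta_i+\theta_j-\theta_k$, with $\deg\lambda=\kappa$, we get
\[\mathcal L_E\eta=-\kappa\,\eta,\qquad \mathcal L_E g=0 \ \text{(in the contravariant sense, up to degree bookkeeping)}.\]
Because $E=\sum d_\alpha t^\alpha\partial_{t^\alpha}$ is diagonal, this forces $\eta^{\alpha\beta}\neq0$ only when $d_\alpha+d_\beta=\kappa$. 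It also makes $\Gamma^{\alpha\beta}_\gamma$ homogeneous of degree $d_\alpha+d_\beta-d_\gamma$.

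\textbf{Step 2: the multiplication.} Define $c^\gamma_{\alpha\beta}$ by \eqref{zh-06-02e}, and raise indices to get $c^{\alpha\beta}_\gamma$. Dubrovin's lemma says that homogeneity of $g$ yields $g^{\alpha\beta}=(d-1+d_\alpha+d_\beta)\,\eta^{\alpha\sigma}\eta^{\beta\rho}\partial_\sigma\partial_\rho F$-type relations, and that $\Gamma^{\alpha\beta}_\gamma=(\tfrac{d-1}{2}+d_\beta)c^{\alpha\beta}_\gamma$ after normalization. Here, since $\tilde E=E/\kappa$ has $\tilde d_\alpha+\tilde d_\beta=1$ on the support of $\eta$, the charge is $d=1$, and the factor $\kappa/d_\rho$ is exactly the renormalized $1/\tilde d_\rho$. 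I would prove symmetry of $c_{\alpha\beta\gamma}$ and the existence of a potential $F$ with $c_{\alpha\beta\gamma}=\partial^3F$ from the symmetries $\Gamma^{\alpha\beta}_\gamma\eta_{\beta\delta}$ and the flatness equations. I would prove associativity from the quadratic part of the zero-curvature equations, following Dubrovin's "flat pencil $\Rightarrow$ Frobenius" theorem in its generalized, charge-$1$ form. I also need $d_\rho>0$, which was established earlier.

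\textbf{Step 3: unit, Euler field, intersection form.} Check that $e$ from \eqref{unit} acts as the identity, i.e.\ $c^\gamma_{\alpha\beta}e^\alpha=\delta^\gamma_\beta$. Using the formula for $\Gamma$ in terms of $c$, this is equivalent to
\[g^{\alpha\beta}\,\omega_{e,\alpha}\text{-contraction}=\eta^{\alpha\beta}\text{-type identity},\qquad \text{e.g. } \mathcal L_e g=\eta .\]
I would verify this via the explicit form of $g$ near $\lambda$-derivatives. Indeed, $\partial_\lambda g_\lambda=\eta$, and $\lambda=\mathrm e^{-2\pi i\kappa c}$ corresponds to shifting $c$, i.e.\ the direction $\omega$. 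So $\eta$ is the Lie derivative of $g$ along the vector field generated by translating $\mathbf x$ by $\omega$. In $z$-coordinates this field is $\propto\sum_r m_r\,\mathrm{grad}\log z^r$-like, and it yields $e$ with the factor $-1/\kappa$. This identification is the main obstacle: one must show the translation field equals $\eta^\sharp(\omega_e)$, and that it is $\eta$-flat, i.e.\ $\nabla^\eta e=0$. I would handle it by comparing $\deg$ relations and the leading $\lambda=0$ terms of $z^r=y^r+\lambda s^r$. After that, $\mathcal L_{\tilde E}c=c$ and $\mathcal L_{\tilde E}\eta=(2-d)\eta$ with $d=1$ follow from Step 1. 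The intersection form $\tilde E\cdot$ raised by $\eta$ reproduces $g^{\alpha\beta}$ by the homogeneity identity of Step 2.
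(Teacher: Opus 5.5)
\textbf{There is a genuine gap.} Your Steps 1--2 are broadly on track: the flat-pencil form $\Gamma^{\alpha\beta}_\gamma=\eta^{\alpha\sigma}\partial_\sigma\partial_\gamma f^\beta$, the homogeneity bookkeeping, and associativity from the $\lambda^0$ part of the curvature via $\Gamma^{\alpha\beta}_\gamma=\tilde d_\beta c^{\alpha\beta}_\gamma$ with $\tilde d_\beta=d_\beta/\kappa$. Step 3, however, rests on statements that are false here, and Step 2 leans on them too.

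The structure has a \emph{non-flat} unity. In the $(A_1,\omega_1)$ example of this paper, $\eta^{11}=2$ is constant in $t^1$ while $e=-\frac{2}{t^1}\partial_{t^1}$, so $\nabla^\eta e\neq 0$. A direct computation there also gives $\mathcal L_e g=2\eta$, not $\eta$. So you cannot prove $\nabla^\eta e=0$ or $\mathcal L_e g=\eta$. For the same reason you cannot invoke Dubrovin's flat-pencil theorem, whose quasihomogeneity hypotheses include exactly $\mathcal L_e g=\eta$ and $\mathcal L_e\eta=0$.

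Your translation heuristic also picks the wrong field. Translation of $\mathbf x$ by $\omega$ (with $c$ fixed) is $\sum_r\theta_r\partial_{x^r}$ and preserves $a$. Because $z^i|_{\lambda=0}$ has $x$-degree $\theta_i$, $\varphi_0$ maps this field to $2\pi i E$. So it produces the Euler field, along which $g$ has zero Lie derivative. The $\lambda$-dependence only encodes the homogeneity $\mathcal L_E g_\lambda=-\kappa\lambda\eta$.

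\textbf{The missing idea} is that $\omega_e$ is flat for the intersection form $g$, not for $\eta$.
\begin{itemize}
\item For $r\in S$ one has $z^r|_{\lambda=0}=y^r|_{\lambda=0}=e^{2\pi i x^r}$. Indeed, any $w$ with $w^{-1}\omega_r\neq\omega_r$ has $(\omega_r-w^{-1}\omega_r,\omega)>0$, so its term carries a positive power of $\lambda$.
\item Hence $\tau=-\frac1\kappa\sum_r m_r\log z^r$ satisfies $\tau\circ\varphi_0=-\frac{2\pi i}{\kappa}\sum_r m_r x^r$, which is linear. So $d\tau$ is $g$-flat, and since $\sum_s a^{rs}m_s=\theta_r$, you get $g^\sharp(d\tau)=\tilde E$ exactly.
\item In flat coordinates of $\eta$, differentiate $\tilde E^\alpha=g^{\alpha\sigma}\partial_\sigma\tau$. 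Use $\partial_\gamma g^{\alpha\sigma}=\Gamma^{\alpha\sigma}_\gamma+\Gamma^{\sigma\alpha}_\gamma$ and $g^{\alpha\sigma}\partial_\sigma\partial_\gamma\tau+\Gamma^{\alpha\sigma}_\gamma\partial_\sigma\tau=0$. This gives $\partial_\sigma\tau\,\Gamma^{\sigma\beta}_\gamma=\tilde d_\beta\delta^\beta_\gamma$, which is precisely $e^\alpha c^\gamma_{\alpha\beta}=\delta^\gamma_\beta$ for $e=\eta^\sharp(d\tau)$.
\item Contract with $g$ and use $g^{\alpha\sigma}\Gamma^{\beta\gamma}_\sigma=g^{\beta\sigma}\Gamma^{\alpha\gamma}_\sigma$. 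This yields $\tilde E^\epsilon\Gamma^{\alpha\beta}_\epsilon=\tilde d_\beta g^{\alpha\beta}$, i.e.\ $g^{\alpha\beta}=\tilde E^\epsilon c^{\alpha\beta}_\epsilon$, which is the intersection-form claim.
\end{itemize}

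This same relation is also the input your Step 2 is missing. Lower indices with $\eta$ and set $f_\beta=\eta_{\beta\mu}f^\mu$. Write it as $g_{\alpha\beta}=\tilde E^\epsilon c_{\alpha\beta\epsilon}$ and combine it with $g_{\alpha\beta}=\partial_\alpha f_\beta+\partial_\beta f_\alpha$, quasi-homogeneity, and $0<\tilde d_\alpha<1$. You obtain $\partial_\alpha\bigl(f_\beta/(1-\tilde d_\beta)\bigr)-\partial_\beta\bigl(f_\alpha/(1-\tilde d_\alpha)\bigr)=\mathrm{const}$. That gives the total symmetry of $c_{\alpha\beta\gamma}$ and the potential $F$.
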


In this paper, we are to prove the following theorem.
\begin{thm}[Main Theorem]
For each $(R, \omega)=(A_\ell, \omega_\ell), ~(B_\ell, \omega_1),~ (C_\ell, \omega_1),~ (D_\ell, \omega_1)$, one can construct a set of pencil generators of $\mathscr A^W$, and a generalized Frobenius manifold structure on $\mathcal M_D(R, \omega)$ by using the approach proposed in Theorem \ref{main}. 
\end{thm}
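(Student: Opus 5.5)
The plan is to treat the four cases $(A_\ell,\omega_\ell)$, $(B_\ell,\omega_1)$, $(C_\ell,\omega_1)$ and $(D_\ell,\omega_1)$ one at a time, each with the same three steps. Once the three steps are done in a given case, Theorem \ref{main} gives the generalized Frobenius manifold structure on $\mathcal M_D(R,\omega)$ directly.

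\emph{Step 1: explicit generators and proper generators.} In each case I pass to the standard orthonormal coordinates. For $A_\ell$ these are $u_k=(\epsilon_k,\bfx)$ with $\sum u_k$ fixed; for $B_\ell,C_\ell,D_\ell$ they are $u_k=(\epsilon_k,\bfx)$ for $k=1,\dots,\ell$. In these coordinates:
\begin{itemize}
\item the basic generators $y^j$ of \eqref{yj definition} become, up to normalisation, elementary symmetric functions of the $\mathrm{e}^{2\pi i u_k}$ (for $A_\ell$), or of the $\mathrm{e}^{2\pi i u_k}+\mathrm{e}^{-2\pi i u_k}$ (for the other types);
\item the $D_\ell$ case needs the usual modification for the two spinor weights;
\item each generator carries the prefactor $\mathrm{e}^{-2\pi i\theta_j c}$.
\end{itemize}
I then compute $\theta_j=(\omega_j,\omega)$ and $\kappa$ in each case, so that I know which degrees $\theta_j-\kappa$ are available for the corrections $s^j$ in \eqref{proper gen cond}. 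Since $\lambda=\mathrm{e}^{-2\pi i\kappa c}$, these corrections are very constrained. For $(A_\ell,\omega_\ell)$ one has $\kappa=1$ and $\theta_j=j/(\ell+1)$, so no correction is possible and $z^j=y^j$. In the other cases only a few low-order $\lambda$-terms can be added; for example, constants times $\lambda$ in the generators of degree $\kappa$. I will package the candidate $z^j$ into a generating (superpotential-type) function $P(p)$, a polynomial or Laurent polynomial in an auxiliary variable $p$ whose coefficients are the $z^j$ and $\lambda$. The free constants in the $s^j$ appear as free coefficients of $P$.

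\emph{Step 2: the pencil property.} I compute the generating form of the metric \eqref{zh-07-11-1}, that is, $g_\lambda(\dif P(p),\dif P(q))$, using the fact that $a^{ij}$ is the standard Euclidean inverse metric in the $u_k$. Because $P$ is a product of linear factors in $\mathrm{e}^{\pm2\pi iu_k}$, this gives a closed formula of the form
\[
\frac{\text{(expression in } P(p),P(q),P'(p),P'(q))}{p-q}.
\]
The $\lambda$-dependence of this formula enters only through the explicit $\lambda$-terms of $P$. I then fix the free constants so that $g_\lambda^{ij}=g^{ij}+\lambda\eta^{ij}$, and check that $\eta$ is nondegenerate; typically $\eta$ is anti-triangular in the $z$'s with constant anti-diagonal, or has determinant a power of one $z^r$ with $r\in S$. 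Linearity of the Christoffel symbols \eqref{zzh-7} is verified in the same generating-function language. I would compute
\[
\Gamma^{ij}_{g_\lambda,k}\,\dif z^k=\frac1{4\pi^2}\sum_{r,s}\frac{\p z^i}{\p x^r}\,a^{rs}\,\frac{\p^2 z^j}{\p x^s\,\p x^t}\,\dif x^t
\]
for the generating function and observe that it is again affine in $\lambda$.

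\emph{Step 3: flat coordinates and the main obstacle.} I expect the main obstacle to be producing flat coordinates $t^\alpha$ of $\eta$ in which $E$ is diagonal. My first attempt will follow the Landau--Ginzburg approach: $\eta$ is realised as a residue pairing for $P$, and the $t^\alpha$ are taken as the coefficients of the expansion of a suitable fractional power $P^{1/n}$ (or of $\log P$) at $p=\infty$, and at $p=0$ for the Laurent cases. If these coefficients are quasi-homogeneous with positive degrees $d_\alpha$, then $E$ takes the form $\sum d_\alpha t^\alpha\p_{t^\alpha}$. I will check flatness either by showing $\eta(\dif t^\alpha,\dif t^\beta)$ is constant directly from the residue formula, or, if that fails, by solving the flatness equations degree by degree for low $\ell$ and then guessing a closed form. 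The delicate cases are $B_\ell$ and $D_\ell$, where the spinor-type generators spoil the polynomial structure of $P$. There I would first change variables, for instance to $w=p+p^{-1}$ or to squares of the spinor generators, to restore a polynomial generating function before taking roots.
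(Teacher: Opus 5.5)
Your outline has the right architecture: a generating function for $g_\lambda$, constant $\lambda$-shifts, a residue description of $\eta$, and squaring the spinor generators for $B_\ell$, $D_\ell$. But it postpones exactly the facts that make up the proof, and for the hardest one your fallback cannot work.

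First, in the $C_\ell$ case, ``fix the free constants'' comes with no argument that suitable constants exist. The missing observation is this. Since every $y^j$ has degree $\kappa=1$, one has $P(u)=Q(u)+\lambda P_1(u)$ with $Q=\sum_j z^ju^{\ell-j}$ and $P_1=u^\ell-P_0$. The $\lambda^2$-part of \eqref{generating of g lambda ij} is the same bilinear expression evaluated on $P_1$. Dividing by $P_1(u)P_1(v)$, its vanishing says that $(u^2-4)P_1'(u)/P_1(u)-\ell u$ is constant, which forces $P_1=(u+2)^m(u-2)^{\ell-m}$ with $0\le m\le\ell$. The generating function \eqref{generating of Gamma lambda k ij} is linear in $P$, so the Christoffel symbols are then automatically affine in $\lambda$.

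Second, nondegeneracy of $\eta$ for all $\ell,m$ is only announced as a check, and your expectation about $\det\eta$ is not what happens. The paper passes to the coordinates $\tau$ of \eqref{generating function of tau j}, adapted to the roots $\pm2$ of $P_1$. In these, $\eta$ is block anti-triangular with $\det\eta\propto(\tau^{\ell-m})^{\ell-m}(\tau^\ell)^m$, which is not a power of $z^1$ (here $S=\{1\}$).

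Third, and most seriously, Step~3 has no general argument. In the $C_\ell$ case (hence also for $B_\ell$, $D_\ell$) all $z^j$ have degree $1$, while the flat coordinates have the non-integer degrees $\frac{2(\ell-m-\alpha)+1}{2(\ell-m)}$ and $\frac{2(\ell-\beta)+1}{2m}$. They are algebraic, not polynomial, in $z$, and $\eta$ degenerates along $\tau^\ell\tau^{\ell-m}=0$. So a degree-by-degree polynomial ansatz fails, and guessing from small $\ell$ would not be a proof anyway.

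The Landau--Ginzburg route does succeed, but not with your $\lambda$-dependent $P$. The superpotential is $\widetilde\Lambda(s)=Q(s)/\bigl((s-2)^m(s+2)^{\ell-m}\bigr)$, which under $s=2\frac{1+p^2}{1-p^2}$ becomes $\Lambda(p)=\frac{p^2-1}{p^{2m}}\sum_j a_jp^{2(\ell-j)}$, paired with the measure $\frac{\dd p}{(p^2-1)^2}$. One must still prove that this residue metric is a multiple of $\eta$, as the paper does via \eqref{zh-02-09-2}. The paper's main argument instead imports the explicit chain $\tau\to w\to v\to t$ from \cite{Advance}.

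For $A_\ell$ the paper's route is much shorter than yours. Since $\deg g^{ij}_\lambda=\frac{i+j}{\ell+1}<2$ and $\deg\Gamma^{ij}_{\lambda,k}<2$ with $\deg\lambda=1$, linearity in $\lambda$ is immediate. The constant anti-diagonal $\ell+1$ gives $\det\eta\ne0$. The flatness system for $\dd t$ then has quasi-homogeneous polynomial coefficients, so its solutions analytic at $0$ are polynomials of positive degree, with no guessing required.

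Finally, for $B_\ell$, $D_\ell$ the squaring is needed already in Step~2, because the spinor generators are not coefficients of $P$. One has $(y^\ell)^2=P(2)$, and for $D_\ell$ also $y^{\ell-1}y^\ell=\frac14\bigl(P(2)-P(-2)\bigr)$ and $(y^{\ell-1})^2+(y^\ell)^2=\frac12\bigl(P(2)+P(-2)\bigr)$. Hence \eqref{B->C} and \eqref{D->C} identify $g_\lambda$ with the $C_\ell$ metric and reduce everything to that case. The degree-$\frac12$ generators cannot be shifted by $\lambda$, so this identification of pencil coordinates is $\lambda$-independent only when $P_1(2)=0$ (and also $P_1(-2)=0$ for $D_\ell$), which restricts the admissible $m$.
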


We organize the paper as follows. In Sect.\,\ref{A-type} and Sect.\,\ref{C-type}, we prove the Main Theorem for the cases $(R, \omega)=(A_\ell, \omega_\ell)$ and $(R, \omega)=(C_\ell, \omega_1)$ respectively. In Sect.\,\ref{B, D-type} we show that that for the cases $(R, \omega)=(B_\ell, \omega_1)$ and $(D_\ell, \omega_1)$, the generalized Frobenius manifold structures that are constructed by using the approach of Theorem \ref{main}
are isomorphic to the ones obtained for the cases $(R, \omega)=(C_\ell, \omega_1)$.

\section{The Case of $(A_\ell, \omega_\ell)$ }\label{A-type}
	
	\subsection{The invariant $\lambda$-Fourier polynomial ring}\label{zh-07-12-1}
	Let $e_1,\dots,e_{\ell+1}$ be an orthonormal basis of $\mathbb{R}^{\ell+1}$, and $R$ be the root system of type $A_{\ell}$ in the hyperplane $V$ of $\mathbb{R}^{\ell+1}$ spanned the simple roots
\[\al_1=e_1-e_2,\dots,\al_\ell=e_\ell-e_{\ell+1}.\]
The coroots and the fundamental weights of $R$ are given by $\alpha_i^\vee =\alpha_i$ and
\begin{align*}
\omega_i =&\,\frac1{\ell+1}\left((\ell-i+1)\al_1+2(\ell-i+1)\al_2+\dots+(i-1)(\ell-i+1)\al_{i-1}\right.\\
&\left.+i(\ell-i+1)\al_i+i(\ell-i)\al_{i+1}+\dots+i\al_\ell\right),\quad
i=1,\dots,\ell.
\end{align*}
	
	Take $\omega=\omega_\ell$, then we have
%
\[\theta_j = (\omega_j,  \omega_\ell)=\frac{j}{\ell+1},\quad j=1,\dots,\ell,
    \]
and $\kappa=1$. We define $\xi^1,\dots, \xi^{\ell+1}$ by the relation
\[ c\omega + x^1\alpha_1^\vee + \dots + x^\ell\alpha_\ell^\vee=\xi^1 e_1+\dots+\xi^{\ell+1} e_{\ell+1},\]
then the basic generators of the $W_a(R)$-invariant $\lambda$-Fourier polynomial ring $\mathscr A^W$ can be represented in the form
\begin{align}\label{y j for A-type}
    y^j(\textbf{x})&=\lambda^{\frac{j}{\ell+1}}Y_j(\textbf{x})
    =\lambda^{\frac{j}{\ell+1}}\sigma_j(e^{2\pi i\xi^1}, \dots, e^{2\pi i\xi^{\ell+1}}), \quad j=1, \dots, \ell,
\end{align}
with 
\[\deg y^j=\theta_j=\frac{j}{\ell+1},\quad \deg\lambda=1,\quad j=1,\dots,\ell.\]
Here and in what follows we denote by $\sigma_j(u_1, \dots, u_{\ell+1})$ the $j$-th elementary symmetric polynomial of $u_1, \dots, u_{\ell+1}$ defined by
\begin{align}
    \prod_{i=1}^{\ell+1}(z+u_i)=\sum_{j=0}^{\ell+1} \sigma_j(u_1, \dots, u_{\ell+1})z^{\ell+1-j}.
\end{align}
We will also denote by $\sigma_j(u_1,\dots, \widehat{u}_k,\dots,u_{\ell+1})$, or simply by  $\sigma_j(\hat{u}_k)$, the 
$j$-th elementary symmetric polynomial of the $\ell$ variables $u_1, \dots, u_{k-1}, u_{k+1},\dots, u_{\ell+1}$, and by $\sigma_j(\widehat{u}_k, \widehat{u}_m)$ the 
$j$-th elementary symmetric polynomial of the $\ell-1$ variables $u_1, \dots, u_{k-1}, u_{k+1},\dots, u_{m-1}, u_{m+1},\dots, u_{\ell+1}$. 

\subsection{The pencil generators}
We are to show in this subsection that $\{y^1, \dots, y^\ell\}$ is a set of pencil generators of $\mathscr A^{W}$. 
To this end, let us consider the components $g_\lambda^{ij}$ of the metric $g_\lambda$ defined by \eqref{zh-07-11-1} with $z^i=y^i$. They can be represented in the form
\begin{align}\label{g expressed in xi}
        g_\lambda^{ij}=&\frac{1}{4\pi^2}\sum_{r,s=1}^\ell \frac{\p y^i}{\p x^r}a^{rs}\frac{\p y^j}{\p x^s}
        =\frac{1}{4\pi^2}\sum_{r,s=1}^{\ell+1} \frac{\p y^i}{\p \xi^r}b^{rs}\frac{\p y^j}{\p \xi^s},
    \end{align}
 where the matrices $(a^{rs})$ and $(b^{rs})$ are defined by
 \[(a^{ij})=\left((\alpha_i^\vee, \alpha_j^\vee)\right)^{-1}=\left(\frac{i(\ell+1-j)}{\ell+1}\right),\quad
        (b^{rs})=\frac 1{\ell+1}\begin{pmatrix}
			\ell & -1 & \cdots & -1\\
			-1 & \ell & \cdots & -1\\
			\vdots & \vdots & \ddots & \vdots\\
			-1 & -1 & \cdots & \ell
		\end{pmatrix}. 
    \]
\begin{lem}\label{g_lambda depends linearly on lambda}
    The quasi-homogeneous polynomials $g^{ij}_\lambda$ of $y^1,\dots, y^\ell, \lambda$ depend at most linearly on $\lambda$ when $\ell+1\le i+j\le2\ell$, and do not depend on $\lambda$ when $2\le i+j\le\ell$. 
\end{lem}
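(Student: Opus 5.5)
Write $u_k=e^{2\pi i\xi^k}$ and $Y_j=\sigma_j(u_1,\dots,u_{\ell+1})$. Note that $\prod_k u_k=e^{2\pi i\sum_k\xi^k}$. Since $c\omega_\ell+\sum x^j\alpha_j$ lies in $V$, we have $\sum_k\xi^k=0$, so $\prod_k u_k=1$. The coefficient $\lambda^{1/(\ell+1)}$ plays the role of $e^{-2\pi i c/(\ell+1)}$, so it is natural to introduce $v_k=\lambda^{1/(\ell+1)}u_k$. Then $y^j=\sigma_j(v_1,\dots,v_{\ell+1})$ for $1\le j\le\ell$, and $\sigma_{\ell+1}(v)=\lambda$. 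The plan is to compute $g^{ij}_\lambda$ explicitly as a symmetric polynomial in the $v_k$ and re-express it through $\sigma_1(v),\dots,\sigma_{\ell+1}(v)=\lambda$.

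First I compute the contraction with $(b^{rs})$. We have $\partial y^i/\partial\xi^r=2\pi i\,v_r\,\sigma_{i-1}(\hat v_r)$. Since $b^{rs}=\delta^{rs}-\frac1{\ell+1}$, this gives
\begin{align*}
g^{ij}_\lambda=-\sum_{r=1}^{\ell+1}v_r^2\sigma_{i-1}(\hat v_r)\sigma_{j-1}(\hat v_r)+\frac{1}{\ell+1}\Bigl(\sum_r v_r\sigma_{i-1}(\hat v_r)\Bigr)\Bigl(\sum_s v_s\sigma_{j-1}(\hat v_s)\Bigr).
\end{align*}
The second term equals $\frac{ij}{\ell+1}\sigma_i\sigma_j$. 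For the first term I use the standard identity $v_r\sigma_{i-1}(\hat v_r)=\sum_{a\ge1}(-1)^{a-1}v_r^{a}\sigma_{i-a}$, so it becomes $\sum_{a,b\ge1}(-1)^{a+b}\sigma_{i-a}\sigma_{j-b}\,p_{a+b}$, where $p_m=\sum_r v_r^m$ are power sums. A cleaner route is to use the well-known closed formula for the $A_\ell$ intersection form (Dubrovin--Zhang, trigonometric case):
\begin{align*}
-\sum_r v_r^2\sigma_{i-1}(\hat v_r)\sigma_{j-1}(\hat v_r)=-\sum_{k\ge0}(i-k)\,\sigma_{i-k}\sigma_{j+k}+\sum_{k\ge1}(j-k)\ldots,
\end{align*}
which I would rederive by the generating-function method. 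Consider $\sum_r \frac{v_r^2}{(1+zv_r)(1+wv_r)}$ written through $P(z)=\prod_r(1+zv_r)$, and extract coefficients. The outcome should be that $g^{ij}_\lambda$ is a bilinear expression $\sum_{k}c_k\,\sigma_{a_k}\sigma_{b_k}$ with $a_k+b_k=i+j$ and $0\le a_k,b_k\le\ell+1$, where $\sigma_0=1$.

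The degree claim then follows from counting. Each term is a product of two $\sigma$'s whose indices sum to $i+j$, and $\lambda$ can occur only as $\sigma_{\ell+1}$. Since indices are at most $\ell+1$, at most one factor can equal $\sigma_{\ell+1}$ whenever $i+j<2\ell+2$. Moreover, a factor $\sigma_{\ell+1}$ requires the other index to be $i+j-\ell-1\ge0$, i.e.\ $i+j\ge\ell+1$. Hence $g^{ij}_\lambda$ is independent of $\lambda$ for $i+j\le\ell$ and at most linear in $\lambda$ for $\ell+1\le i+j\le 2\ell$. Alternatively, the same conclusion follows from quasi-homogeneity alone: $g^{ij}_\lambda$ is a polynomial in $y^1,\dots,y^\ell,\lambda$ of degree $(i+j)/(\ell+1)$ with $\deg\lambda=1$, so $\lambda^2$ cannot appear when $i+j<2\ell+2$. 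This holds provided $g^{ij}_\lambda$ is known to be polynomial in $(y,\lambda)$, which was stated from \cite{JTZ2025-1}. By the same degree count, $\lambda$ cannot appear at all when $i+j<\ell+1$.

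I therefore expect the argument to be short. The degree count is the whole proof, given polynomiality. If polynomiality is only asserted for pencil generators (which is what we are trying to establish), I instead rely on the explicit symmetric-function expression above. That expression makes the polynomial dependence on $\sigma_1,\dots,\sigma_\ell,\sigma_{\ell+1}=\lambda$ manifest, since every symmetric polynomial in $v$ is a polynomial in the $\sigma$'s, and the degree count then applies verbatim. The only real obstacle is checking that the $v$-rescaling is consistent with $\sum\xi^k=0$.
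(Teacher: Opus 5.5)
Your proposal is correct, and its core is exactly the paper's argument: $g^{ij}_\lambda$ is a quasi-homogeneous polynomial in $y^1,\dots,y^\ell,\lambda$ of degree $\frac{i+j}{\ell+1}$ with $\deg y^j>0$ and $\deg\lambda=1$, so $\lambda^2$ cannot occur when $i+j\le 2\ell$, and $\lambda$ cannot occur at all when $i+j\le\ell$. Your extra route through $v_k=\lambda^{1/(\ell+1)}e^{2\pi i\xi^k}$, with $\sigma_{\ell+1}(v)=\lambda$, validly proves the polynomiality that the lemma's statement presupposes, and it does not need the unfinished bilinear formula: homogeneity of $v$-degree $i+j$ with $\deg\sigma_k=k$ already gives the same count.
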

\begin{proof}
    Since 
$\deg g_\lambda^{ij}=\theta_i+\theta_j=\frac{i+j}{\ell+1}$, we have $1\le\deg g_\lambda^{ij}<2$ when $\ell+1\le i+j\le 2\ell$, and $\deg g_\lambda^{ij}<1$
when $2\le i+j\le \ell$. Thus from the fact that
$\deg y^j=\theta_j>0, \deg\lambda=1$, it follows that
$g^{ij}_\lambda$ depends at most linearly on $\lambda$ when $\ell+1\le i+j\le 2\ell$, and they do not depend on $\lambda$ when $2\le i+j\le \ell$. The lemma is proved.
\end{proof}
By using this lemma, we can represent $g^{ij}_\lambda$ in the form
\begin{align}
    g_\lambda^{ij}=g^{ij}+\lambda \eta^{ij},
\end{align}
where $\eta^{ij}=0$ when $2\le i+j\le \ell$, and the anti-diagonal elements of $\eta^{i, \ell+1-i}$ of $\eta$ are constants. 

\begin{prop}\label{counter-diagonal of eta}
    All the anti-diagonal elements of $\eta$ are equal to $\ell+1$. 
\end{prop}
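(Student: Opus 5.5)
The plan is to compute $g^{ij}_\lambda$ explicitly for $i+j=\ell+1$ in the variables $u_k=\mathrm{e}^{2\pi i\xi^k}$, $k=1,\dots,\ell+1$, using \eqref{g expressed in xi}, and then read off the $\lambda$-coefficient. Since $\omega=\omega_\ell\in V$ and $\alpha_j^\vee\in V$, we have $\sum_k\xi^k=0$, i.e. $\sigma_{\ell+1}(u)=u_1\cdots u_{\ell+1}=1$. From $\partial y^i/\partial\xi^r=2\pi i\,\lambda^{\frac{i}{\ell+1}}u_r\,\sigma_{i-1}(\widehat u_r)$ and the identity $\sum_r u_r\sigma_{i-1}(\widehat u_r)=i\sigma_i$, the form of $(b^{rs})$ gives
\[
g^{ij}_\lambda=-\lambda^{\frac{i+j}{\ell+1}}\Bigl(\sum_{r=1}^{\ell+1}u_r^2\sigma_{i-1}(\widehat u_r)\sigma_{j-1}(\widehat u_r)-\frac{ij}{\ell+1}\sigma_i\sigma_j\Bigr)=:-\lambda^{\frac{i+j}{\ell+1}}P_{ij}(u).
\]
For $i+j=\ell+1$ the prefactor is $\lambda$. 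The quantity $P_{ij}$ is a symmetric polynomial, homogeneous of degree $\ell+1$ in $u$. We therefore write it as a polynomial in $\sigma_1,\dots,\sigma_{\ell+1}$:
\[
P_{ij}=c\,\sigma_{\ell+1}+Q(\sigma_1,\dots,\sigma_\ell),
\]
where $Q$ is weighted homogeneous of weight $\ell+1$ and each of its monomials $\sigma_{k_1}\cdots\sigma_{k_p}$ has $p\ge2$. Then
\[
\lambda\,\sigma_{k_1}\cdots\sigma_{k_p}=y^{k_1}\cdots y^{k_p}\quad\text{because } \textstyle\sum k_a=\ell+1,
\]
while $\lambda\sigma_{\ell+1}=\lambda$. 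Hence $g^{ij}_\lambda=-Q(y)-c\lambda$, and so $\eta^{ij}=-c$. The claim reduces to showing $c=-(\ell+1)$.

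To extract $c$, I would specialize $u$ to the locus where $\sigma_1=\dots=\sigma_\ell=0$. This is legitimate as an identity in the polynomial ring $\mathbb C[u_1,\dots,u_{\ell+1}]$, before imposing $\sigma_{\ell+1}=1$. On this locus $Q$ and the $\sigma_i\sigma_j$ term vanish, and the $u_r$ are the roots of $(-z)^{\ell+1}$-type equation $\prod_s(z+u_s)=z^{\ell+1}+\sigma_{\ell+1}$. Dividing by $z+u_r$ gives
\[
\prod_{s\ne r}(z+u_s)=\frac{z^{\ell+1}+\sigma_{\ell+1}}{z+u_r}=\sum_{k=0}^{\ell}(-u_r)^kz^{\ell-k},
\]
so $\sigma_k(\widehat u_r)=(-u_r)^k$. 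Consequently each summand equals $u_r^2(-u_r)^{i+j-2}=(-1)^{\ell-1}u_r^{\ell+1}$. Using $(-u_r)^{\ell+1}=-\sigma_{\ell+1}$, i.e. $u_r^{\ell+1}=(-1)^\ell\sigma_{\ell+1}$, each summand equals $-\sigma_{\ell+1}$. Summing over the $\ell+1$ values of $r$ yields $P_{ij}=-(\ell+1)\sigma_{\ell+1}$ there. Thus $c=-(\ell+1)$ and $\eta^{i,\ell+1-i}=\ell+1$.

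The main obstacle is justifying the specialization argument cleanly: the coefficient $c$ must be well defined and detectable on the locus $\sigma_1=\dots=\sigma_\ell=0$. This follows from the algebraic independence of $\sigma_1,\dots,\sigma_{\ell+1}$ and the fact that every other monomial of weight $\ell+1$ contains some $\sigma_k$ with $k\le\ell$. The remaining sign bookkeeping in $\sigma_k(\widehat u_r)=(-u_r)^k$ is routine but should be double-checked.
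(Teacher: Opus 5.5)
Your proof is correct and follows essentially the same route as the paper. The paper evaluates at the point where the $u_k$ are the roots of $z^{\ell+1}+(-1)^{\ell+1}$, which is your locus $\sigma_1=\dots=\sigma_\ell=0$ with $\sigma_{\ell+1}=1$; there it uses $\sigma_k(\widehat u_r)=(-u_r)^k$ and sums $u_r^{\ell+1}$ over $r$, exactly as you do. The only difference is that the paper gets the constancy of $\eta^{i,\ell+1-i}$ and the vanishing of $g^{i,\ell+1-i}$ at $y=0$ from the degree-counting Lemma~\ref{g_lambda depends linearly on lambda}, whereas your decomposition $P_{ij}=c\,\sigma_{\ell+1}+Q(\sigma_1,\dots,\sigma_\ell)$ re-derives this directly, which is equally valid.
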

\begin{proof}
Since the anti-diagonal elements of $\eta$ are constant, we can determine them by calculating $g^{ij}_\lambda$ for special values of $y^1,\dots, y^\ell$.

Let $e^{2\pi i\xi_0^1}, \dots, e^{2\pi i\xi_0^{\ell+1}}$ be the $\ell+1$ roots of $z^{\ell+1} + (-1)^{\ell+1}$. Then we have
\begin{align*}
\sigma_j(e^{2\pi i\xi_0^1}, \dots, e^{2\pi i\xi_0^{\ell+1}})=0,\quad  \sigma_{\ell+1}(e^{2\pi i\xi_0^1}, \dots, e^{2\pi i\xi_0^{\ell+1}})=1,\quad j=1,\dots,\ell.
\end{align*}
By using the identity
\begin{align*}
			\frac{z^{\ell+1} + (-1)^{\ell+1}}{z - e^{2\pi i\xi_0^k}} = z^\ell + e^{2\pi i\xi_0^k}z^{\ell-1} + \dots + e^{2\pi i\ell\xi_0^k}
			\end{align*}
we also have
\[\sigma_j(e^{2\pi i\xi_0^1},\dots,\widehat{e^{2\pi i\xi_0^k}},\dots,e^{2\pi i\xi_0^{\ell+1}}) = (-1)^je^{2\pi ij\xi_0^k},\quad j=1,\dots,\ell+1,\]
thus we obtain
\begin{align*}
				\left.\pdf{y^r}{\xi^k}\right|_{\xi=\xi_0} &= \left.2\pi i\mu^r e^{2\pi i\xi^k}\sigma_{r-1}(e^{2\pi i\xi^1},\dots,\widehat{e^{2\pi i\xi^k}},\dots,e^{2\pi i\xi^{\ell+1}})\right|_{\xi=\xi_0}
				= 2\pi i(-1)^{r-1}\mu^re^{2\pi ir\xi_0^k},
			\end{align*}
where $\mu=\lambda^{\frac{1}{\ell+1}}$. 
Now from \eqref{g expressed in xi} it follows that
           \begin{align*}
			g_\lambda^{rs} &=\left. \frac 1{4\pi^2(\ell+1)}\left((\ell+1)\sum_{k=1}^{\ell+1}\pdf{y^r}{\xi^k}\pdf{y^s}{\xi^k} - \sum_{j,  k=1}^{\ell+1}\pdf{y^r}{\xi^j}\pdf{y^s}{\xi^k}\right)\right|_{\xi=\xi_0}\\
			&= -(-1)^{\ell+1}\mu^{\ell+1}\left(\sum_{k=1}^{\ell+1}e^{2\pi i(r+s)\xi_0^k}
			- \frac 1{\ell+1}\sum_{j=1}^{\ell+1}e^{2\pi ir\xi_0^j}\sum_{k=1}^{\ell+1}e^{2\pi is\xi_0^k}\right)\\
			&= (\ell+1)\lambda
		\end{align*}
when $r+s=\ell+1$. The proposition is proved.
\end{proof}

From Proposition \ref{counter-diagonal of eta} we know that
\begin{align}\label{det of eta}
    \det(\eta^{ij})=(-1)^{\frac{\ell(\ell-1)}{2}}(\ell+1)^\ell, 
\end{align}
so $\eta$ is non-degenerate on $\mathcal{M}({A_\ell},\omega_\ell)$. 

\begin{thm}\label{zh-07-17-2}
The basic generators $y^1,\dots, y^\ell$ form a set of pencil generators of $\mathscr A^W$. 
\end{thm}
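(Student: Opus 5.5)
The plan is to check the two requirements in the definition of pencil generators for $z^j=y^j$. Lemma \ref{g_lambda depends linearly on lambda} and \eqref{det of eta} already give most of the first one. The real work is the linearity \eqref{zzh-7} of the Christoffel symbols, which I will get from a degree count plus a uniqueness argument for Levi-Civita connections.

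\emph{Step 1 (the metric).} Since $i+j\le 2\ell$ for all $i,j$, Lemma \ref{g_lambda depends linearly on lambda} shows that every $g^{ij}_\lambda$ is at most linear in $\lambda$. Hence $g^{ij}_\lambda=g^{ij}+\lambda\eta^{ij}$ with $g^{ij},\eta^{ij}$ polynomial in $y^1,\dots,y^\ell$. By \eqref{det of eta}, $\det\eta$ is a nonzero constant, so $\eta$ is nondegenerate everywhere on $\mathcal M$.

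\emph{Step 2 (the Christoffel symbols are polynomial and at most linear in $\lambda$).} The contravariant Christoffel symbols of the pushed-forward metric are
\[
\Gamma^{ij}_{g_\lambda,k}\,\dif y^k=\frac1{4\pi^2}\sum_{r,s}a^{rs}\pdf{y^i}{x^r}\,\dif\Bigl(\pdf{y^j}{x^s}\Bigr).
\]
I must show these are quasi-homogeneous polynomials in $y^1,\dots,y^\ell,\lambda$ of degree $\theta_i+\theta_j-\theta_k$.

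\begin{itemize}
\item The quantities $\sum_{r,s} a^{rs}\,\partial_{x^r}y^i\,\partial_{x^s}\partial_{x^m}y^j\,\dif x^m$ are $W_a(R)$-invariant and lie in the $\lambda$-Fourier ring. Their $W(R)$-invariance follows from that of $a^{rs}$ and $y^i$.
\item The polynomial dependence on $y$ then follows from the argument of \cite{JTZ2025-1} (Dubrovin--Zhang type), which was used there for $g_\lambda$ and its connection. Concretely, I would check that the argument only uses that the $z^j$ are proper generators, so it applies here. If needed, this can be verified directly in the $\xi$-coordinates through \eqref{g expressed in xi}, by expressing everything via the elementary symmetric functions $\sigma_j$.
\end{itemize}

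Granting this, $\deg\Gamma^{ij}_{g_\lambda,k}=\frac{i+j-k}{\ell+1}\le\frac{2\ell-1}{\ell+1}<2$. Since $\deg y^j>0$ and $\deg\lambda=1$, each symbol has the form $\Gamma^{ij}_{g_\lambda,k}=\Gamma^{ij}_{0,k}+\lambda\Gamma^{ij}_{1,k}$. This polynomiality check is where I expect the main obstacle, if \cite{JTZ2025-1} does not already cover it.

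\emph{Step 3 (identifying the coefficients).} For every $\lambda$ on the unit circle, $\Gamma_{g_\lambda}$ is the Levi-Civita connection of $g_\lambda$. It is therefore characterized by metric compatibility, $\partial_k g^{ij}_\lambda=\Gamma^{ij}_{g_\lambda,k}+\Gamma^{ji}_{g_\lambda,k}$, and by the torsion-free condition $g^{is}_\lambda\Gamma^{jk}_{g_\lambda,s}=g^{js}_\lambda\Gamma^{ik}_{g_\lambda,s}$. Both sides are polynomial in $\lambda$ and hold on the unit circle, so I compare coefficients of powers of $\lambda$.

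\begin{itemize}
\item The $\lambda^0$ terms show that $\Gamma_0$ is the Levi-Civita connection of $g$, where $g$ is nondegenerate.
\item The $\lambda^2$ term of the torsion condition gives $\eta^{is}\Gamma^{jk}_{1,s}=\eta^{js}\Gamma^{ik}_{1,s}$.
\item The $\lambda^1$ term of compatibility gives $\partial_k\eta^{ij}=\Gamma^{ij}_{1,k}+\Gamma^{ji}_{1,k}$.
\end{itemize}

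Since $\eta$ is nondegenerate, the Levi-Civita connection of $\eta$ is unique, so $\Gamma_1=\Gamma_\eta$. This gives \eqref{zzh-7} and proves the theorem.
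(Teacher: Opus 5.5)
Your proposal is correct and follows the paper's proof. The paper likewise takes the quasi-homogeneous polynomiality of $\Gamma^{ij}_{g_\lambda,k}$ in $y^1,\dots,y^\ell,\lambda$ from \cite{JTZ2025-1} and gets linearity in $\lambda$ from $\deg\Gamma^{ij}_{g_\lambda,k}=\frac{i+j-k}{\ell+1}<2$; your Step 3, which compares powers of $\lambda$ in the compatibility and torsion-free identities to identify the two coefficients with $\Gamma_g$ and $\Gamma_\eta$, spells out the standard step the paper leaves implicit.
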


\begin{proof}
We only need to show that the 
Christoﬀel symbols $\Gamma^{ij}_{g_\lambda,k}$ of the Levi-Civita connection of the contravariant metric $g_\lambda$ depend linearly on $\lambda$. Indeed, this follows from the fact that they are quasi-homogeneous polynomials of $y^1,\dots, y^\ell$ and $\lambda$, and
\[\deg \Gamma_{\lambda, k}^{ij}=\theta_i+\theta_j-\theta_k=\frac{i+j-k}{\ell+1}<2.\]
The theorem is proved.
\end{proof}

\subsection{Flat coordinates of the metric $\eta$}\label{2.3}
We are to show in this subsection that one can choose a system of flat coordinates $t^1, \dots, t^\ell$ of the metric $\eta$ which are quasi-homogeneous polynomials in $y^1, \dots, y^\ell$. To this end, we first give the explicit expression of the components of the metric $\eta$.

\begin{prop}\label{zh-07-14-1}
In the coordinates $y^1,\dots, y^\ell$, the metric $\eta=(\eta^{ij})$ is given by
    \begin{align}\label{eta ab}
 \eta^{ij}=\begin{cases}0,& 2\le i+j\le \ell,\\
    (2\ell+2-i-j)y^{i+j-1-\ell}, &\ell+1\le i+j\le 2\ell,
    \end{cases}
\end{align}
and the contravariant components of the Levi-Civita connection of $\eta$
have the expressions
\begin{align}\label{Gamma lambda}
			\Gamma^{ij}_{\eta, k} = (\ell+1-j) \delta_{i+j-k,  \ell+1}. 
		\end{align}
\end{prop}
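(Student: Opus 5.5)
The plan is to reduce everything to elementary symmetric functions of $\ell+1$ rescaled variables, compute the pencil $g_\lambda$ by a generating function, and read off $\eta$ as the $\lambda$-derivative.

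\textbf{Change of variables.} Since $\bfx\in V$, we have $\sum_k\xi^k=0$. Put $u_k=e^{2\pi i\xi^k}$, so that $\prod_k u_k=1$. Set $\mu=\lambda^{\frac1{\ell+1}}$ and $U_k=\mu u_k$. By \eqref{y j for A-type},
\[y^j=\sigma_j(U_1,\dots,U_{\ell+1}),\quad j=1,\dots,\ell,\qquad \lambda=\sigma_{\ell+1}(U_1,\dots,U_{\ell+1}).\]
So $y^1,\dots,y^\ell,\lambda$ are exactly the coefficients of
\[P(z)=\prod_{k=1}^{\ell+1}(z+U_k)=z^{\ell+1}+y^1z^\ell+\dots+y^\ell z+\lambda.\]
We also set $y^0=1$. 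Since $\partial/\partial\xi^k=2\pi i\,U_k\,\partial/\partial U_k$ and $\sum_k U_k\partial_{U_k}\sigma_i=i\sigma_i$, formula \eqref{g expressed in xi} with the explicit matrix $(b^{rs})$ gives
\[g^{ij}_\lambda=\frac{ij}{\ell+1}\,y^iy^j-\sum_{k=1}^{\ell+1}U_k^2\,\frac{\partial\sigma_i}{\partial U_k}\frac{\partial\sigma_j}{\partial U_k}.\]

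\textbf{Generating function.} Using $\partial_{U_k}P(z)=P(z)/(z+U_k)$, the partial fraction identity
\[\frac{U_k^2}{(z+U_k)(w+U_k)}=1+\frac{1}{w-z}\Bigl(\frac{z^2}{z+U_k}-\frac{w^2}{w+U_k}\Bigr),\]
and $\sum_k 1/(z+U_k)=P'(z)/P(z)$, we obtain
\[\sum_k U_k^2\,\partial_{U_k}P(z)\,\partial_{U_k}P(w)=(\ell+1)P(z)P(w)+\frac{z^2P'(z)P(w)-w^2P'(w)P(z)}{w-z}.\]
This is a polynomial expression in $y^1,\dots,y^\ell,\lambda$. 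The term $\frac{ij}{\ell+1}y^iy^j$ is generated similarly from $(zP'-(\ell+1)P)(z)$ and the same expression in $w$. Extracting the coefficient of $z^{\ell+1-i}w^{\ell+1-j}$ gives a closed formula for $g^{ij}_\lambda$ as a bilinear sum in the $y$'s, with $y^{\ell+1}=\lambda$.

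By Lemma \ref{g_lambda depends linearly on lambda}, $\eta^{ij}=\partial g^{ij}_\lambda/\partial\lambda$. Differentiating the closed formula picks out the terms containing $y^{\ell+1}$. After the telescoping from the divided difference $\frac{z^a w^b-w^a z^b}{w-z}$, these should collapse to $(2\ell+2-i-j)\,y^{i+j-1-\ell}$ for $i+j\ge\ell+1$, and to $0$ otherwise. The $\lambda y$ cross terms from $\frac{ij}{\ell+1}y^iy^j$ are of degree $\ge 1+\theta$, and they must cancel against part of the divided-difference contribution.

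I expect this index bookkeeping to be the main obstacle. I will check it against Proposition \ref{counter-diagonal of eta}, which predicts the value $\ell+1$ at $y^0$ on the anti-diagonal, and against the case $\ell=1,2$ by hand.

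\textbf{Christoffel symbols.} Since $\eta^{ij}$ is linear in the $y$'s, the contravariant Levi-Civita symbols $\Gamma^{ij}_{\eta,k}$ are constants. For a nondegenerate contravariant metric they are uniquely determined by the two conditions
\[\partial_k\eta^{ij}=\Gamma^{ij}_{\eta,k}+\Gamma^{ji}_{\eta,k},\qquad \eta^{is}\Gamma^{jk}_{\eta,s}=\eta^{js}\Gamma^{ik}_{\eta,s}.\]
The first is metric compatibility and the second is torsion-freeness. I would verify both directly for $\Gamma^{ij}_{\eta,k}=(\ell+1-j)\delta_{i+j-k,\ell+1}$:
\begin{itemize}
\item The first condition holds because $(\ell+1-j)+(\ell+1-i)=2\ell+2-i-j$ and $\partial_k y^{i+j-1-\ell}=\delta_{k,i+j-1-\ell}$.
\item For the second, both sides reduce to $(\ell+1-k)(2\ell+2-i-j-k)\,y^{i+j+k-2\ell-2}$, which is symmetric in $i,j$. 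Here one uses that $s=j+k-\ell-1$ on the left, with the boundary cases $y^0=1$ and vanishing for negative indices handled in the same way on both sides.
\end{itemize}
Uniqueness of the Levi-Civita connection then gives \eqref{Gamma lambda}.
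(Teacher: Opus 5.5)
Your argument is correct. For $\eta$ it takes a different route from the paper; for the Christoffel symbols it is the same.

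\textbf{Comparison with the paper.} The paper never writes $g_\lambda$ out as a polynomial. It regards $u_j=\mu e^{2\pi i\xi^j}$ as the roots of $f(u)$ and differentiates \eqref{g lambda ab} through them, using $\partial_\lambda u_j=(-1)^\ell/f'(u_j)$. It then has to evaluate sums such as $\sum_j\sigma_a(\widehat u_j)\sigma_b(\widehat u_j)/f'(u_j)$ with the residue identities of Lemma~\ref{1 term}. It also needs a recursive rewriting of the double sums $\sum_{r\ne j}\sigma_a(\widehat u_j,\widehat u_r)\sigma_{b+1}(\widehat u_r)$.

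You instead obtain all of $g_\lambda$ at once from the generating function, as an explicit bilinear form in $(1,y^1,\dots,y^\ell,\lambda)$. This is the same device the paper uses for $(C_\ell,\omega_1)$. It gives the at-most-linear dependence on $\lambda$ and the formula for $g=g_\lambda|_{\lambda=0}$ for free, and it reduces $\eta$ to a coefficient extraction. The paper's route avoids computing $g$, but it is considerably longer. For the Christoffel symbols you do exactly what the paper does: verify the two defining relations and invoke uniqueness of the Levi-Civita connection.

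\textbf{The deferred extraction.} The step you leave as ``should collapse'' is a one-liner, and no cancellation is involved, contrary to what you anticipate. For $1\le i,j\le\ell$, the contributions $\frac{ij}{\ell+1}y^iy^j$ and $-(\ell+1)y^iy^j$ from the other two terms contain no $\lambda$. Since $P'$ is free of $\lambda$, the entire $\lambda$-part comes from the constant terms of $P(z)$ and $P(w)$ in the divided difference:
\[
\lambda\,\frac{w^2P'(w)-z^2P'(z)}{w-z}=\lambda\sum_{a=0}^{\ell}(\ell+1-a)\,y^a\sum_{r=0}^{\ell+1-a}z^r w^{\ell+1-a-r}.
\]
The coefficient of $z^{\ell+1-i}w^{\ell+1-j}$ here is $(\ell+1-a)y^a$ with $a=i+j-\ell-1$. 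That is, it equals $(2\ell+2-i-j)y^{i+j-\ell-1}$ for $i+j\ge\ell+1$ and $0$ otherwise.

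\textbf{Two minor slips.}
\begin{enumerate}
\item With $s=j+k-\ell-1$ one gets $\eta^{is}\Gamma^{jk}_{\eta,s}=(\ell+1-k)(3\ell+3-i-j-k)\,y^{i+j+k-2\ell-2}$, not a factor $2\ell+2-i-j-k$. This term is present exactly when $i+j+k\ge 2\ell+2$, and it is still symmetric in $i,j$, so your conclusion stands.
\item Linearity of $\eta^{ij}$ alone does not force the $\Gamma^{ij}_{\eta,k}$ to be constant. You do not need that claim, since constancy comes out of the verification-plus-uniqueness argument.
\end{enumerate}
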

 Let us make some preparations for the proof of this proposition.

\begin{lem}\label{some basic properties}
   The symmetric polynomials $\sigma_a(u_1,\dots,u_{\ell+1})$ have the following properties: 
\begin{align*}
&1.\ \frac{\p}{\p u_i}\left(u_r\sigma_a(\widehat u_r)\right) = \begin{cases}
		u_r\sigma_{a-1}(\widehat u_r,  \widehat u_i), & r \not= i, \\
			\sigma_a(\widehat  u_r), & r=i,
		\end{cases} \ \mbox{for}\ a=1, \dots, \ell. \\
&2.\ \sum_{r=1}^{\ell+1} \sigma_a(\widehat{u}_r) = (\ell+1-a)\sigma_a(u_1,\dots,u_{\ell+1}),\quad a=1, \dots, \ell. \\
&3.\ \sum_{r=1, r\neq i}^{\ell+1} \sigma_a(\widehat{u}_r,  \widehat{u}_i) = (\ell-a)\sigma_a({\widehat{u}_i}),\quad a=1, \dots, \ell-1.\\
&4.\ \sum_{r=1}^{\ell+1} u_r\sigma_a(\widehat u_r) = (a+1)\sigma_{a+1}(u_1,\dots,u_{\ell+1}),\quad  a=1, \dots, \ell. \\
&5.\ \sum_{r=1, r\neq i}^{n} u_r\sigma_a(\widehat u_r, \widehat u_i) = (a+1)\sigma_{a+1}(\widehat u_i),\quad a=1, \dots, \ell-1.
\end{align*} 
Moreover, suppose $A$ is a subset of $\{u_1, \dots, u_{\ell+1}\}$ and $u_r \in A$, then we have
		\[u_r\sigma_a(\widehat A) = \sigma_{a+1}(\widehat{A\setminus u_r}) - \sigma_{a+1}(\widehat{A}).\] 
\end{lem}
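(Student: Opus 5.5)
All six identities follow by comparing monomials. The elementary symmetric polynomial $\sigma_a$ of a set of variables is the sum of the squarefree monomials $u_B=\prod_{s\in B}u_s$, where $B$ runs over the $a$-element subsets of that set of variables. The convention $\sigma_0=1$ is needed so that the cases $a=1$ make sense. In item 5 the upper limit $n$ should be read as $\ell+1$.

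\emph{Property 1 and the final identity.} For $r\neq i$, the polynomial $\sigma_a(\widehat u_r)$ is affine in $u_i$:
\[
\sigma_a(\widehat u_r)=\sigma_a(\widehat u_r,\widehat u_i)+u_i\,\sigma_{a-1}(\widehat u_r,\widehat u_i).
\]
This holds because each monomial either avoids $u_i$ or contains it exactly once. Differentiating $u_r\sigma_a(\widehat u_r)$ in $u_i$ then gives $u_r\sigma_{a-1}(\widehat u_r,\widehat u_i)$. For $r=i$, the factor $\sigma_a(\widehat u_r)$ does not involve $u_r$, so the derivative is $\sigma_a(\widehat u_r)$.

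The last identity uses the same splitting. Let $A$ be a subset with $u_r\in A$. The variables remaining after deleting $A\setminus\{u_r\}$ are those remaining after deleting $A$, together with $u_r$. Splitting the $(a+1)$-subsets according to whether they contain $u_r$ gives
\[
\sigma_{a+1}(\widehat{A\setminus u_r})=\sigma_{a+1}(\widehat A)+u_r\,\sigma_a(\widehat A),
\]
which is the claimed formula.

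\emph{Properties 2 and 3.} In $\sum_r\sigma_a(\widehat u_r)$, a fixed monomial $u_B$ with $|B|=a$ occurs once for each index $r\notin B$. There are $\ell+1-a$ such indices, which gives property 2. Property 3 is the same count inside the $\ell$ variables other than $u_i$: now $r$ ranges over the $\ell-a$ indices different from $i$ and not in $B$.

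\emph{Properties 4 and 5.} Multiplying out, $u_r\sigma_a(\widehat u_r)$ is the sum of the monomials $u_C$ with $|C|=a+1$ and $r\in C$. Summing over $r$ therefore counts each $(a+1)$-subset $C$ once for each of its $a+1$ elements, which gives $(a+1)\sigma_{a+1}$. Property 5 is the identical argument inside the variables other than $u_i$.

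There is no real obstacle here. The only points needing care are the index ranges at the endpoints, namely $a=\ell$ in items 2 and 4 and $a=\ell-1$ in items 3 and 5, where $\sigma_{\ell+1}$ and $\sigma_\ell(\widehat u_i)$ are single monomials. The counting argument covers these cases without change.
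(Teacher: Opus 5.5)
Your proposal is correct and complete: the splitting of each elementary symmetric polynomial according to whether a monomial contains a given variable proves property 1 and the final identity, and the counting of indices proves properties 2–5. You also correctly read the upper limit $n$ in item 5 as $\ell+1$ and adopt the convention $\sigma_0=1$. The paper states this lemma without proof, treating the identities as standard, so your argument supplies the routine verification the authors omit rather than following or departing from a given route.
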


Now take $u_j=\mu e^{2\pi i\xi^j}$ for $j=1, \dots, \ell+1$, where $\mu=\lambda^{\frac1{\ell+1}}$, then we have
\begin{align*}
    y^j&=\sigma_j(u_1, \dots, u_{\ell+1}),\quad  j=1, \dots, \ell;\\
    \lambda&=\sigma_{\ell+1}(u_1, \dots, u_{\ell+1}). 
\end{align*}
Thus $u_1,\dots,u_{\ell+1}$ are roots of the polynomial
\begin{align*}
f(u)=u^{\ell+1}-y^1 u^\ell+\dots+(-1)^\ell y^\ell u+(-1)^{\ell+1}\lambda,
\end{align*}
and as functions of $y^1,\dots, y^\ell, \lambda$, they satisfy the relations
\begin{align*}
    \frac{\partial u_j}{\partial\lambda}=\frac{(-1)^\ell}{f'(u_j)},\quad j=1,\dots,\ell+1.
\end{align*}
From \eqref{g expressed in xi} we know that
\begin{align}\label{g lambda ab}
    g_\lambda^{ab}=-\sum_{r, s=1}^{\ell+1}\frac{(\ell+1)\delta_{rs}-1}{\ell+1}u_r u_s\sigma_{a-1}(\widehat u_r)\sigma_{b-1}(\widehat u_s),
\end{align}
where $\delta_{rs}$ is the Kronecker-Delta function. Thus by using Lemma \ref{some basic properties} we obtain
\begin{align}
    &\eta^{a+1, b+1}=\frac{\partial}{\partial\lambda}g_\lambda^{a+1, b+1}\notag\\
    =&\,\frac{(-1)^{\ell+1}}{\ell+1}\sum_{j=1}^{\ell+1}\frac{1}{f'(u_j)}\frac{\partial}{\partial u_j}\left(\sum_{r, s=1}^{\ell+1}((\ell+1)\delta_{rs}-1)u_r u_s\sigma_{a}(\widehat u_r)\sigma_{b}(\widehat u_s)\right)\notag\\
=&\,\sum_{j=1}^{\ell+1}\frac{(-1)^{\ell+1}(F_j+G_j)}{(\ell+1)f'(u_j)},\label{eta a+1, b+1}
\end{align}
where $1\le a, b\le \ell-1$, and
\begin{align*}
F_j=&\,-(\ell-a)(\ell-b)\left[\sigma_{a+1}(u_1,\dots,u_{\ell+1})\sigma_b(\widehat u_j) \right.\\
&\left.+ \sigma_a(\widehat u_j)\sigma_{b+1}(u_1,\dots,u_{\ell+1}) \right],\\
G_j=&\, (\ell+1)\sum_{r \neq j}\left[\sigma_a(\widehat u_j, \widehat u_r)\sigma_{b+1}(\widehat u_r) + \sigma_{a+1}(\widehat u_r)\sigma_b(\widehat u_j, \widehat u_r)\right].
\end{align*}
In order to simplify the expression of \eqref{eta a+1, b+1} we need the following lemma.

\begin{lem}\label{1 term}
 For $1 \le a, b \le \ell$,  $a+b \ge \ell$,  we have
    \begin{align}
        \sum_{j=1}^{\ell+1}\frac{\sigma_a(\widehat u_j)}{f'(u_j)} =(-1)^{\ell}\delta_{a\ell}, \quad \sum_{j=1}^{\ell+1} \frac{\sigma_a(\widehat u_j)\sigma_b(\widehat u_j)}{f'(u_j)} =(-1)^{\ell}\sigma_{a+b-\ell}(u_1,\dots,u_{\ell+1}). 
    \end{align}
    	\end{lem}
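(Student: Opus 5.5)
The approach is Lagrange interpolation, i.e. the residue identity for the polynomial $f(u)=\prod_{k=1}^{\ell+1}(u-u_k)$. For any polynomial $P$ of degree at most $\ell$, the sum $\sum_j P(u_j)/f'(u_j)$ equals the coefficient of $u^\ell$ in $P$. More generally, for $\deg P\le 2\ell$, the sum equals the coefficient of $u^{-1}$ in the Laurent expansion of $P(u)/f(u)$ at infinity.

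The first step is to express $\sigma_a(\widehat u_j)$ as a polynomial in $u_j$ whose coefficients depend only on the full symmetric functions. Dividing $f(u)$ by $u-u_j$, or iterating the last identity of Lemma \ref{some basic properties} with $A=\{u_j\}$, gives
\[
\sigma_a(\widehat u_j)=\sum_{k=0}^{a}(-1)^k\sigma_{a-k}\,u_j^k ,
\]
where $\sigma_m=\sigma_m(u_1,\dots,u_{\ell+1})$ and $\sigma_0=1$. This is a polynomial of degree $a$ in $u_j$ with leading coefficient $(-1)^a$.

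For the first identity, apply the residue formula with $P(u)=\sum_{k=0}^{a}(-1)^k\sigma_{a-k}u^k$. Since $a\le\ell$, the sum $\sum_j\sigma_a(\widehat u_j)/f'(u_j)$ is the coefficient of $u^\ell$ in $P$. That coefficient is $0$ unless $a=\ell$, in which case it is $(-1)^\ell$. This proves the first formula.

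For the second identity, an alternative to expanding $P_aP_b$ is the generating-function form
\[
\sum_{a}\sigma_a(\widehat u_j)z^{\ell-a}=\frac{\prod_k(z+u_k)}{z+u_j}=\frac{f(-z)(-1)^{\ell+1}}{z+u_j}.
\]
With it, the sum $\sum_j\sigma_a(\widehat u_j)\sigma_b(\widehat u_j)/f'(u_j)$ becomes a partial-fraction computation in two auxiliary variables. I would instead proceed directly. Set $P_a(u)=\sum_k(-1)^k\sigma_{a-k}u^k$, so that $\sigma_a(\widehat u_j)=P_a(u_j)$. The sum equals $\mathrm{Res}_{u=\infty}$-type data: $-\mathrm{Res}_{u=\infty}\,P_a(u)P_b(u)/f(u)$, i.e. the coefficient of $u^{-1}$ in $P_aP_b/f$. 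This is valid because $\deg(P_aP_b)\le 2\ell<\ell+1+\ell$ and there are no other poles.

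The key simplification is that $P_a(u)$ is the truncation, in degree $\le a$, of the power series of $(-1)^a u^{a}\cdot\bigl(\text{coefficient part of }f(u)/u^{\ell+1}\bigr)$. More precisely, $f(u)=u^{a+1}Q(u)+(-1)^{a+1}P_a(u)\cdot(\ldots)$. Equivalently, $(u-u_j)P_a(u)$-type division gives
\[
P_a(u)=(-1)^a\bigl[u^{a-\ell-1}f(u)\bigr]_{+},
\]
where $[\cdot]_+$ keeps the nonnegative powers of $u$. Hence
\[
P_a=(-1)^a u^{a-\ell-1}f-(-1)^a\bigl[u^{a-\ell-1}f\bigr]_-,
\]
and the negative part is $O(u^{-1})$ with coefficients built from $\sigma_{a+1},\dots,\sigma_{\ell+1}$.

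Substitute this for $P_a$ only. Then
\[
\frac{P_aP_b}{f}=(-1)^a u^{a-\ell-1}P_b-(-1)^a\frac{[u^{a-\ell-1}f]_-\,P_b}{f}.
\]
In the second term, $P_b/f=O(u^{b-\ell-1})$ and $[\cdot]_-=O(u^{-1})$, so that term is $O(u^{b-\ell-2})$. Because $b\le\ell$, this is $O(u^{-2})$ and it does not contribute to the residue. The first term gives the coefficient of $u^{\ell-a}$ in $(-1)^aP_b$, namely $(-1)^a(-1)^{\ell-a}\sigma_{b-(\ell-a)}=(-1)^\ell\sigma_{a+b-\ell}$. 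This coefficient is meaningful precisely because $a+b\ge\ell$, which forces $\ell-a\le b$.

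The main obstacle is keeping track of signs and orientation conventions: the residue at infinity versus the sum of residues at the $u_j$, and the alternating signs in $f$. I would check both identities on the case $\ell=1$ before writing the argument out.
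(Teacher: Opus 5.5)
Your argument is correct. For the first identity it is the paper's Lagrange-interpolation fact written in coefficient form rather than as a residue at $0$. For the second identity you take a genuinely different route.

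The paper handles the second identity with two auxiliary variables:
\begin{itemize}
\item It writes each factor as a separate coefficient extraction, $\sigma_a(\widehat u_j)=(-1)^a\res_{z=0}\frac{f(z)}{(z-u_j)z^{\ell+1-a}}$, and likewise for $\sigma_b(\widehat u_j)$ in a second variable $u$.
\item It splits $\frac{1}{(z-u_j)(u-u_j)}=\frac{1}{u-z}\bigl[\frac{1}{z-u_j}-\frac{1}{u-u_j}\bigr]$ and sums over $j$ using $\sum_j\frac{1}{f'(u_j)(u-u_j)}=\frac{1}{f(u)}$.
\item Everything collapses to the divided difference $\frac{f(u)-f(z)}{u-z}$, and a double coefficient of it gives $(-1)^\ell\sigma_{a+b-\ell}$.
\end{itemize}

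You instead stay in one variable:
\begin{itemize}
\item You write $\sigma_a(\widehat u_j)=P_a(u_j)$, where $P_a$ has fully symmetric coefficients.
\item You turn the sum into the $u^{-1}$-coefficient of $P_aP_b/f$ at infinity.
\item You cancel $f$ against one factor using $P_a=(-1)^a[u^{a-\ell-1}f]_+$.
\end{itemize}

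Your version makes it transparent where each hypothesis enters:
\begin{itemize}
\item $b\le\ell$ makes the remainder term $O(u^{-2})$;
\item $a\le\ell$ and $a+b\ge\ell$ place $u^{\ell-a}$ inside the support of $P_b$.
\end{itemize}
The paper's symmetric two-variable version gives instead a single generating function for all pairs $(a,b)$ at once. That partial-fraction device is what the paper reuses later, for $\tilde\eta(\dd a_\alpha,\dd a_\beta)$ in the $A_\ell$ Landau--Ginzburg section and for $\tilde\eta(\dd z^j,\dd z^k)$ in the $C_\ell$ case.

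Two cosmetic points. First, the residue-at-infinity identity holds for every polynomial $P$, not only for $\deg P\le 2\ell$. The degree bound plays no role; what matters is that the simple poles at the distinct $u_j$ are the only finite poles. Second, replace the garbled sentence about $f(u)=u^{a+1}Q(u)+\dots$ with the truncation formula, which is correct as you state it.
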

\begin{proof}
 The first set of identities follows easily from the fact that
\[\sigma_a(\widehat u_j)=(-1)^a \res_{u=0}\frac{f(u)}{(u-u_j) u^{\ell+1-a}},\quad
\sum_{j=1}^{\ell+1}\frac{1}{f'(u_j) (u-u_j)}=\frac1{f(u)}.\]
The second set of identities also hold true, since
\begin{align*}
&\sum_{j=1}^{\ell+1} \frac{\sigma_a(\widehat u_j)\sigma_b(\widehat u_j)}{f'(u_j)}=\sum_{j=1}^{\ell+1}\frac1{f'(u_j)}\res_{z=0}\res_{u=0} (-1)^{a+b}\frac{f(z) f(u)}{(z-u_j)(u-u_j) z^{\ell+1-a} u^{\ell+1-b}}\\
=&\,\res_{z=0}\res_{u=0}(-1)^{a+b}\frac{f(z)}{z^{\ell+1-a}}\frac{f(u)}{u^{\ell+1-b}}\sum_{j=1}^{\ell+1}\frac1{f'(u_j)}\left[\frac1{z-u_j}-\frac1{u-u_j}\right]\frac1{u-z}\\
=&\,\res_{z=0}\res_{u=0}(-1)^{a+b}\frac{1}{z^{\ell+1-a}}\frac{1}{u^{\ell+1-b}}\frac1{u-z}\left[f(u)-f(z)\right]\\
=&\,\res_{z=0}\res_{u=0}(-1)^{a+b}\frac{1}{z^{\ell+1-a}}\frac{1}{u^{\ell+1-b}}\sum_{k=0}^\ell (-1)^k \sigma_k(u_1,\dots,u_{\ell+1})\sum_{p=0}^{\ell-k} u^{\ell-k-p} z^p\\
=&\,\sum_{k=0}^\ell\sum_{p=0}^{\ell-k}(-1)^{a+b+k}\sigma_k(u_1,\dots,u_{\ell+1})\delta_{\ell-a,p}\delta_{\ell-b,\ell-k-p}\\
=&\,(-1)^\ell\sigma_{a+b-\ell}(u_1,\dots,u_{\ell+1}).
\end{align*}
 The lemma is proved.
\end{proof}
\begin{proof}[Proof of Proposition \ref{zh-07-14-1}] It follows from Lemma \ref{1 term} that 
\begin{align*}
		\sum_{j=1}^{\ell+1} \frac{(-1)^{\ell+1}F_j}{f'(u_j)(\ell+1)} = 0, 
	\end{align*}
so we only need to compute
\begin{align*}
    \sum_{j=1}^{\ell+1} \frac{(-1)^{\ell+1}G_j}{f'(u_j)(\ell+1)}
		=\sum_{j=1}^{\ell+1}\frac{(-1)^{\ell+1}}{f'(u_j)}\sum_{r \not= j}\left(\sigma_a(\widehat u_j, \widehat u_r)\sigma_{b+1}(\widehat u_r) + \sigma_{a+1}(\widehat u_r)\sigma_b(\widehat u_j, \widehat u_r)\right)
\end{align*}
for $a, b$ satisfying the conditions $1\le a, b\le \ell-1$ and $\ell\le a+b\le 2\ell-2$.
By using the identities
\begin{align*}
    &\sigma_{a}(\widehat u_j, \widehat u_r)\sigma_{b+1}(\widehat u_r)=\left(\sigma_{a}(\widehat u_j) - w^r\sigma_{a-1}(\widehat u_j, \widehat u_r)\right)\sigma_{b+1}(\widehat u_r)\\
		=&\,\sigma_{a}(\widehat u_j)\sigma_{b+1}(\widehat u_r) - \sigma_{a-1}(\widehat u_j, \widehat u_r)\left(\sigma_{b+2}(u_1,\dots, u_{\ell+1}) - \sigma_{b+2}(\widehat u_r)\right)\\
		=&\,\sigma_{a}(\widehat u_j)\sigma_{b+1}(\widehat u_r) +\sigma_{a-1}(\widehat u_j, \widehat u_r) \sigma_{b+2}(\widehat u_r)  - \sigma_{a-1}(\widehat u_j, \widehat u_r)\sigma_{b+2}(u_1,\dots,u_{\ell+1}), 
\end{align*}
and by using Lemmas \ref{some basic properties}, \ref{1 term} we have
\begin{align*}
    &\sum_{j=1}^{\ell+1}\frac{(-1)^{\ell+1}}{f'(u_j)}\sum_{r \not= j}\sigma_a(\widehat u_j, \widehat u_r)\sigma_{b+1}(\widehat u_r) \\
    =&\,\sum_{j=1}^{\ell+1}\frac{(-1)^{\ell+1}}{f'(u_j)}\sum_{r \not= j}\sigma_{a-1}(\widehat u_j, \widehat u_r) \sigma_{b+2}(\widehat u_r)\\
    &+ \sum_{j=1}^{\ell+1}\frac{(-1)^{\ell+1}}{f'(u_j)}\sigma_{a}(\widehat u_j)\left((\ell-b)\sigma_{b+1}(u_1,\dots,u_{\ell+1}) - \sigma_{b+1}(\widehat u_j)\right)\\
		&-\sum_{j=1}^{\ell+1}\frac{(-1)^{\ell+1}}{f'(u_j)} (\ell-a+1)\sigma_{a-1}(\widehat u_j)\sigma_{b+2}(u_1,\dots,u_{\ell+1})\\
		=&\,\sum_{j=1}^{\ell+1}\frac{(-1)^{\ell+1}}{f'(u_j)}\sum_{r \not= j}\sigma_{a-1}(\widehat u_j, \widehat u_r) \sigma_{b+2}(\widehat u_r)
	+\sigma_{a+b+1-\ell}(u_1,\dots,u_{\ell+1})\\
=&\,\sum_{j=1}^{\ell+1}\frac{(-1)^{\ell+1}}{f'(u_j)}\sum_{r \neq j}\sigma_{a+b+1-\ell}(\widehat u_j, \widehat u_r) \sigma_{\ell}(\widehat u_r)
	+(\ell-b-1)\sigma_{a+b+1-\ell}(u_1,\dots,u_{\ell+1})\\
=&\,	(\ell-b)\sigma_{a+b+1-\ell}(u_1,\dots,u_{\ell+1}).
\end{align*}
Thus we arrive at
\[\eta^{a+1,b+1}=(2\ell-a-b)\sigma_{a+b+1-\ell}(u_1,\dots,u_{\ell+1})
=(2\ell-a-b) y^{a+b+1-\ell},\]
which yields the formula \eqref{eta ab}. 

Finally, The formulae \eqref{Gamma lambda} for the contravariant components of $\eta$ follow from the relations
\begin{align*}
    \frac{\p\eta^{ij}}{\p y^k} = \Gamma_{\eta,k}^{ij} + \Gamma_{\eta,k}^{ji}, \quad
				\eta^{is}\Gamma_{\eta,s}^{jk}=\eta^{js}\Gamma_{\eta,s}^{ik}. 
\end{align*}
The proposition is proved.
\end{proof}

\begin{thm}\label{flat metric of A}
    There exist quasi-homogeneous polynomials
    \begin{align*}
        t^\alpha=t^\alpha(y^1, \dots, y^\alpha), \quad \alpha=1, \dots, \ell
    \end{align*}
    of degrees $d_\alpha=\frac{\alpha}{\ell+1}$ such that $t^1, \dots, t^\ell$ are flat coordinates of $\eta$. Moreover,  the linear part of $t^\alpha$ is given by $y^\alpha$.  
    \begin{proof}
        The flat coordinates of $\eta$ are solutions to the system of equations
        \begin{align}\label{zh-07-16-2}
            \eta^{ik}\frac{\partial^2t}{\partial y^k\partial y^j}+\Gamma_{\eta, j}^{ik}\frac{\partial t}{\partial y^k}=0.
        \end{align}
Let $\psi_i=\frac{\p t}{\p y^i}$, then \eqref{zh-07-16-2} can be written as
the following system of equations for $\Psi=(\psi_1,\dots,\psi_\ell)$:\begin{equation}\label{zh-07-16-1}
\frac{\p\psi_i}{\p y^j}-\gamma^k_{ij}\psi_k=0,\quad i,j=1,\dots,\ell,
\end{equation}
where $\gamma^k_{ij}=-\eta_{ir}\Gamma_{\eta, j}^{rk}$.
From \eqref{eta ab} it follows that $\gamma^k_{ij}$ are quasi-homogeneous polynomials of $y^1,\dots, y^\ell$ of degree $\frac{k-i-j}{\ell+1}$, so we can find a fundamental system of solutions 
\[\Psi^\alpha=(\psi^\al_1,\dots, \psi^\al_\ell),\quad \al=1,\dots\ell,\]
of \eqref{zh-07-16-1} which satisfy the initial condition
\[\psi^i_j(0)=\delta^i_j,\quad i, j=1,\dots,\ell,\]
and are analytic at $(y^1,\dots, y^\ell)=(0,\dots,0)$.
Since the system of equations \eqref{zh-07-16-1} is invariant $\textit{w.r.t.}$ the transformation
        \begin{align*}
            y^j\to c^{\frac{j}{\ell+1}}y^j, \quad \psi_j\to c^{-\frac{j}{\ell+1}}\psi_j,\quad j=1, \dots, \ell
        \end{align*}
        for any nonzero constant $c$ and $\deg y^i=\frac{i}{\ell+1}>0$, the functions $\psi^\al_j$ must be quasi-homogeneous polynomials in $y^1, \dots, y^\ell$ of degrees $\frac{\al-j}{\ell+1}$. Thus we can choose the desired system of flat coordinates $t^1,\dots, t^\ell$ of the metric $\eta$
by using the relations $\frac{\p t^\al}{\p y^j}=\psi^\al_j$. The theorem is proved.
    \end{proof}
\end{thm}
\begin{cor}
    In the flat coordinates $t^1,\dots, t^\ell$, the components of the metric $\eta$ are given by
    \begin{align*}
        \eta^{\alpha\beta}=(\ell+1)\delta_{\beta, \ell+1-\alpha};
    \end{align*}
and the components of the metric $g$ and the Christoﬀel symbols of its Levi-Civita connection are quasi-homogeneous polynomials in $t^1, \dots, t^\ell$ with
\[\deg g^{\al\beta}(t)=\frac{\alpha+\beta}{\ell+1},\quad
\deg \Gamma^{\al\beta}_\gamma(t)=\frac{\alpha+\beta-\gamma}{\ell+1};\] 
moreover, the vector field define by \eqref{Euler} has the expression
\begin{equation}\label{zh-07-17-3}
E=\sum_{\al=1}^\ell d_\al t^\al \frac{\p}{\p t^\al}=\sum_{\al=1}^\ell \frac{\al}{\ell+1}t^\al \frac{\p}{\p t^\al}.
\end{equation}
\end{cor}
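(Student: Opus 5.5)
The corollary has three parts: the form of $\eta$, the polynomiality and degrees of $g$ and its Christoffel symbols, and the diagonal form of $E$. I would treat them in that order.

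\textbf{Components of $\eta$.} Since $t^1,\dots,t^\ell$ are flat coordinates of $\eta$ by Theorem \ref{flat metric of A}, the components $\eta^{\al\beta}(t)$ are constants. They are also quasi-homogeneous: $t^\al$ is a quasi-homogeneous polynomial in $y$ of degree $\frac{\al}{\ell+1}$, and $\eta^{ij}(y)$ has degree $\frac{i+j}{\ell+1}-1$ by \eqref{eta ab}. Hence
\[
\eta^{\al\beta}=\frac{\p t^\al}{\p y^i}\frac{\p t^\beta}{\p y^j}\eta^{ij}
\]
is quasi-homogeneous of degree $\frac{\al+\beta}{\ell+1}-1$. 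A nonzero constant must have degree $0$, so $\eta^{\al\beta}=0$ unless $\al+\beta=\ell+1$.

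To find the anti-diagonal constants, I would evaluate at $y=0$. By Theorem \ref{flat metric of A}, the linear part of $t^\al$ is $y^\al$, so $\frac{\p t^\al}{\p y^i}(0)=\delta^\al_i$. This gives $\eta^{\al,\ell+1-\al}=\eta^{\al,\ell+1-\al}(y)|_{y=0}$. By Proposition \ref{counter-diagonal of eta} (equivalently by \eqref{eta ab} with $i+j=\ell+1$), this value is $\ell+1$.

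\textbf{The Euler field.} The map $y\mapsto t$ is triangular: $t^\al=y^\al+(\text{polynomial in }y^1,\dots,y^{\al-1})$. It is therefore polynomially invertible, and each $y^j$ is a quasi-homogeneous polynomial in $t$ of the same degree. In the $y$ coordinates, $E=\sum_r\theta_r y^r\p_{y^r}$ is the generator of the quasi-homogeneous scaling. Euler's identity applied to each $t^\al$ gives $E(t^\al)=\frac{\al}{\ell+1}t^\al$, which is \eqref{zh-07-17-3}.

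\textbf{Degrees of $g$ and its connection.} We know that $g^{ij}(y)$ and $\Gamma^{ij}_{g,k}(y)$ are quasi-homogeneous polynomials in $y$ (set $\lambda=0$ in $g_\lambda$) of degrees $\theta_i+\theta_j$ and $\theta_i+\theta_j-\theta_k$. For the metric, the tensor transformation law gives
\[
g^{\al\beta}=\frac{\p t^\al}{\p y^i}\frac{\p t^\beta}{\p y^j}g^{ij},
\]
which is polynomial in $t$ of degree $\frac{\al+\beta}{\ell+1}$. For the Christoffel symbols,
\[
\Gamma^{\al\beta}_\gamma=\left(\frac{\p t^\al}{\p y^i}\frac{\p t^\beta}{\p y^j}\Gamma^{ij}_{g,k}+g^{ij}\frac{\p t^\al}{\p y^i}\frac{\p^2 t^\beta}{\p y^j\p y^k}\right)\frac{\p y^k}{\p t^\gamma}.
\]
Every factor here is a polynomial: the Jacobian $\p y/\p t$ is polynomial because the inverse map is polynomial. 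Counting degrees gives $\frac{\al+\beta-\gamma}{\ell+1}$.

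The only mild obstacle is making sure the inverse map and the Jacobian $\p y/\p t$ are polynomial. This is exactly what the triangularity with unit linear part provides.
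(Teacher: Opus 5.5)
Your proposal is correct and follows the same route as the paper. The paper gives no separate proof: it states the corollary as an immediate consequence of Theorem~\ref{flat metric of A} (triangular, quasi-homogeneous flat coordinates with linear part $y^\alpha$), Proposition~\ref{counter-diagonal of eta}, and the quasi-homogeneity of $g^{ij}$ and $\Gamma^{ij}_{g,k}$; your degree count, evaluation at $y=0$, Euler-identity argument and polynomial-inverse transformation laws supply exactly the details it leaves implicit.
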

Note that if we define the involution 
\begin{align*}
    *\colon\{1, \dots, \ell\} \to \{1, \dots, \ell\}, \quad\alpha \mapsto \alpha^*:=\ell+1-\alpha, 
\end{align*}
then the degrees of $t^1, \dots, t^\ell$ satisfy the duality relation
\begin{align*}
    \deg t^\alpha+\deg t^{\alpha^*}=1. 
\end{align*}

\subsection{The generalized Frobenius manifold sturcture on $\mathcal{M}_D({A_\ell},\omega_\ell)$ and monodromy group}
From Theorems \ref{main}, \ref{zh-07-17-2} it follows that there is a generalized Frobenius manifold structure of charge $d=1$ on $\mathcal{M}_D({A_\ell},\omega_\ell)$ with flat metric $\eta$ and multiplication defined by \eqref{zh-07-17-1}. The unit vector field and the Euler vector field are given by
  \begin{align}\label{the unity for A-type1}
e=\eta^{\sharp}(\omega_e)=-\eta^\sharp(\mathrm{d}\log y^\ell)
    \end{align}
and by \eqref{zh-07-17-3}, and the intersection form is given by $g$. From Theorem \ref{flat metric of A} we also know that the contravariant components $\Gamma^{\al\beta}_\gamma(t)$ of the Levi-Civita connection of the intersection form $g$ are quasi-homogeneous polynomials of the flat coordinates $t^1,\dots, t^\ell$, so the structure constants $c^{\al\beta}_\gamma$ of the generalized Frobenius algebra defined by
\eqref{zh-06-02e} are also quasi-homogeneous polynomials of the flat coordinates.

The generalized Frobenius manifold structure can also be characterized by its potential $F(t)$, which is a quasi-homogeneous polynomial of degree 2 defined by
\begin{align*}
    \frac{\partial^3 F}{\partial t^\alpha\partial t^\beta\partial t^\gamma}=\eta_{\gamma\xi}c_{\alpha\beta}^\xi, \quad \al, \beta, \gamma=1,\dots,\ell.
\end{align*}
It can also be determined by using the intersection form $g$ as follows:
\[\frac{\p^2 F}{\p t^\al\p t^\beta}(t)=\frac1{2-d_\al-d_\beta}\,\eta_{\al\xi}\eta_{\beta\zeta}g^{\xi\zeta}(t)=\frac{\ell+1}{2\ell+2-\al-\beta}\, \eta_{\al\xi}\eta_{\beta\zeta}g^{\xi\zeta}(t),\quad \al,\beta=1,\dots,\ell.\]
From 
\cite{JTZ2025-1} we know that the monodromy group of $\mathcal{M}_D({A_\ell},\omega_\ell)$ is given by $\mathrm{Stab}_W(\omega_\ell)\ltimes\mathbb Z^\ell$, where
\begin{align}
	\mathrm{Stab}_W(\omega)=\{\sigma \in W(R)\mid\sigma(\omega) = \omega\}
\end{align}
 is a parabolic subgroup of $W(R)$ which can be represented by
 \begin{align}
	\mathrm{Stab}_W(\omega)=\langle\sigma_i\mid i=1,\dots,\ell-1\rangle\cong S_\ell.
\end{align}

\subsection{Examples}
Let us give some examples to illustrate the above construction of generalized Frobenius manifold structures on $\mathcal{M}_D(A_\ell, \omega_\ell)$. 

\begin{ex}
    Let $(R, \omega)=(A_1,\omega_1)$. We have the $W_a(R)$-invariant $\lambda$-Fourier polynomial 
		\[y^1=\lambda^{\frac{1}{2}}Y_1=e^{2\pi i x^1}+\lambda e^{-2\pi i x^1}. \]
The contravariant metric on $V\otimes\mathbb{C}$ is given by
		\[(\dif x^1, \dif x^1)=(\alpha^\vee_1, \alpha^\vee_1)^{-1}=\frac12,\]
which induces the metric 
		\[g^{11}_\lambda=-\frac12 (y^1)^2+2\lambda\]
on $\mathcal{M}$. In the flat coordinate $t^1=y^1$ of $\eta$, the flat pencil $\eta, g$ has the form
		\[\eta^{11}=2, \quad g^{11}=-\frac12 (t^1)^2. \]
		Thus we obtain a generalized Frobenius manifold structure with potential 
		\[F=-\frac1{96} (t^1)^4. \]
		The Euler vector field and the unity are given by
		\[E=\frac12 t^1\frac{\partial}{\partial t^1}, \quad
		e=-\frac2{t^1}\frac{\partial}{\partial t^1}. \]
\end{ex}
\begin{ex} Let $(R, \omega)=(A_2, \omega_2)$.
We have the $W_a(R)$-invariant $\lambda$-Fourier polynomials
		\begin{align*}
			y^1&=\lambda^{\frac13} Y_1=e^{2\pi i x^1}+e^{2\pi i (x^2-x^1)}+\lambda e^{-2\pi i x^2}, \\
			y^2&=\lambda^{\frac23} Y_2=e^{2\pi i x^2}+\lambda e^{-2\pi i x^1}+\lambda e^{2\pi i (x^1-x^2)}. 
		\end{align*}
The contravariant metric on $V\otimes\mathbb{C}$ is given by		
		\[\left((\alpha^\vee_i,  \alpha^\vee_j)\right)=\begin{pmatrix}2 &-1\\ -1 &2\end{pmatrix}, \quad
		\left(\dif x^i, \dif x^j\right)={\begin{pmatrix}2 &-1\\ -1 &2\end{pmatrix}}^{-1}=\frac13\begin{pmatrix}2 &1\\ 1 &2\end{pmatrix},\]
which induces the metric 
		\[\left(g_\lambda^{ij}\right)=\begin{pmatrix}-\frac23 (y^1)^2+2 y^2 & 3\lambda -\frac13 y^1 y^2\\[3pt] 3\lambda -\frac13 y^1 y^2& 2 \lambda y^1-\frac23 (y^2)^2\end{pmatrix}\]
on $\mathcal{M}$. We have the flat pencil 
		\[
		\left(\eta^{ij}\right)=\begin{pmatrix}0 &3\\ 3&2 y^1\end{pmatrix}, \quad
		\left(g^{ij}\right)=\begin{pmatrix}-\frac23 (y^1)^2+2 y^2 & -\frac13 y^1 y^2\\[3pt] -\frac13 y^1 y^2& -\frac23 (y^2)^2\end{pmatrix}. 
		\]
        The metric $\eta$ has flat coordinates
		\[t^1= y^1, \quad t^2=y^2-\frac16 (y^1)^2, \quad \]
		in which the metrics $\eta$ and $g$ have the form
		\[
		\left(\eta^{\alpha\beta}\right)=\begin{pmatrix}0 &3\\ 3&0\end{pmatrix}, \quad
		\left(g^{\alpha\beta}\right)=\begin{pmatrix} 2t^2-\frac13(t^1)^2& -t^1 t^2+\frac 1{18} (t^1)^3\\[4pt]-t^1 t^2+\frac 1{18} (t^1)^3&-\frac1{54}(6t^2-(t^1)^2)^2\end{pmatrix}. 
		\]
		Thus we obtain a generalized Frobenius manifold $\mathcal M_D(A_2, \omega_2)$ with potential
		\[
			F=\frac1{18} (t^2)^3-\frac1{36} (t^1)^2 (t^2)^2+\frac1{648}  (t^1)^4t^2-\frac1{19440} (t^1)^6.
		\]
The Euler vector field and the unity are given by
		\[E=\frac13 t^1\frac{\partial}{\partial t^1}+\frac23 t^2\frac{\partial}{\partial t^2}, \quad
		e=-\frac{18}{(t^1)^2+6 t^2}\frac{\partial}{\partial t^1}-\frac{6 t^1}{(t^1)^2+6 t^2 }\frac{\partial}{\partial t^2}. \]
	
\end{ex}
\begin{ex}
Let $(R, \omega)=(A_3, \omega_3)$.
We have the $W_a(R)$-invariant $\lambda$-Fourier polynomials
\begin{align*}
    y^1=&\,e^{2\pi i x^1}+e^{-2\pi i (x^1-x^2)}+\lambda e^{-2\pi i x^3}+e^{-2\pi i (x^2-x^3)}, \\
y^2=&\,\lambda^{\frac12} Y_2=e^{2\pi i x^2}+\lambda e^{-2\pi i x^2}+\lambda e^{2\pi i (x^1-x^3)}+\lambda e^{-2\pi i (x^1-x^2+x^3)}\\
&\,+e^{-2\pi i (x^1-x^3)}+e^{2\pi i (x^1-x^2+x^3)}, \\
y^3=&\,e^{2\pi i x^3}+\lambda e^{2\pi i (x^1-x^2)}+\lambda e^{-2\pi i x^1}+\lambda e^{2\pi i (x^2-x^3)}. 
\end{align*}
The contravariant metric on $V\otimes\mathbb{C}$ is given by	
\[ \left(\dif x_i, \dif x_j\right)={\left((\alpha^\vee_i,  \alpha^\vee_j)\right)}^{-1}
=\frac14\begin{pmatrix}3 &2&1\\ 2 &4&2\\1&2&3\end{pmatrix},\]
which induces the metric 
\begin{align*}
\left(g_\lambda^{ij}\right)
&=\begin{pmatrix}-\frac34 (y^1)^2+2 y^2 & -\frac12 y^1 y^2+3 y^3&-\frac14 y^1 y^3+4\lambda \\[3pt] -\frac12 y^1 y^2+3 y^3&-(y^2)^2+2 y^1 y^3+4\lambda&-\frac12 y^2y^3+3\lambda y^1\\[3pt]-\frac14 y^1 y^3+4\lambda&-\frac12 y^2y^3+3\lambda y^1&-\frac34 (y^3)^2+2 \lambda y^2
\end{pmatrix}. 
\end{align*}
on $\mathcal{M}$. We have the flat pencil 
\begin{align*}
\left(\eta^{ij}\right)=\begin{pmatrix}0&0 &4\\ 0&4&3 y^1\\4&3 y^1&2 y^2\end{pmatrix}, \quad \left(g^{ij}\right)=\begin{pmatrix}-\frac34 (y^1)^2+2 y^2 & -\frac12 y^1 y^2+3 y^3&-\frac14 y^1 y^3 \\[3pt] -\frac12 y^1 y^2+3 y^3&-(y^2)^2+2 y^1 y^3&-\frac12 y^2y^3\\[3pt]-\frac14 y^1 y^3&-\frac12 y^2y^3&-\frac34 (y^3)^2\end{pmatrix}. 
\end{align*}
The metric $\eta$ has flat coordinates
\[t^1= y^1, \quad t^2=y^2-\frac14 (y^1)^2, \quad 
t^3=y^3-\frac14 y^1 y^2+\frac5{96} (y^1)^3, \]
in which the metric $\eta$ has the form
\[
\left(\eta^{\alpha\beta}\right)=\begin{pmatrix}0&0 &4\\ 0&4&0\\ 4&0&0\end{pmatrix}, 
\]
and the intersection form is given by
\begin{align*}
    &g^{11}(t)=-\frac{1}{4}(t^1)^2+2t^2, ~g^{21}(t)=\frac{1}{32}((t^1)^3-24t^1t^2+96t^3), \\
    &g^{13}(t)=-\frac{1}{384}(t^1)^4+\frac{1}{8}(t^1)^2t^2-\frac{1}{2}(t^2)^2-t^1t^3, \\
    &g^{22}(t)=-\frac{1}{96}(t^1)^4+\frac{1}{4}(t^1)^2t^2-(t^2)^2-t^1t^3, \\
    &g^{23}(t)=\frac{5((t^1)^2-8t^2)((t^1)^3-24t^1t^2+96t^3)}{3072}, \\
&g^{33}(t)=\frac{-(t^1)^6+30(t^1)^4t^2-288(t^1)^2(t^2)^2-48(t^1)^3t^3+1152t^1t^2t^3+384(t^2)^3-2304(t^3)^2}{3072}. 
\end{align*}
We have the potential 
\begin{align*}F=&-\frac{(t^1)^8}{4128768}+\frac{(t^1)^6t^2}{73728}-\frac{(t^1)^5t^3}{30720}-\frac{(t^1)^4(t^2)^2}{3072}+\frac{(t^1)^3t^2t^3}{384}+\frac{(t^1)^2(t^2)^3}{384}\\
&-\frac{(t^1)^2(t^3)^2}{64}-\frac{t^1(t^2)^2t^3}{32}-\frac{(t^2)^4}{192}+\frac{t^2(t^3)^2}{8}. \end{align*}
of the generalized Frobenius manifold.
The Euler vector field is given by 
\[E=\frac14 t^1\frac{\partial}{\partial t^1}+\frac12 t^2\frac{\partial}{\partial t^2}+\frac34 t^3\frac{\partial}{\partial t^3}, \]
and the unity has the expression
\[e=-\frac{12}{(t^1)^3+24t^1t^2+96t^3}\left(32\tangentvector{t^1}+8t^1\tangentvector{t^2}+((t^1)^2+8 t^2)\tangentvector{t^3}\right). \]
\end{ex}

\subsection{The Landau-Ginzburg superpotential}
In this subsection, we will show that the above generalized Frobenius manifold structures on $\mathcal{M}_D({A_\ell},\omega_\ell)$ can be realized by using the following LG superpotentials:
\begin{equation}
    \Lambda(p)=p^{\ell+1}+a_1p^\ell+a_2p^{\ell-1}+\dots+a_\ell p,\quad a_1,\dots,a_\ell\in\mathbb{C},
\end{equation}
following the approach of constructing Frobenius manifold structures on Hurwitz spaces given in \cite{2d-tft}.

Let $\mathcal{M}_\ell$ be the space
\[\mathcal{M}_\ell=\left\{(a_1,\dots,a_\ell)\in\mathbb{C}^\ell\right\}.\]
We define on $\mathcal{M}_\ell$ the following $(0,2)$-type tensor
\begin{equation}\label{LG_eta}
    \tilde{\eta}(\p',\p'')=\sum_{q:\Lambda'(q)=0}\mathop{\mathrm{Res}}_{p=q}\frac{\p'(\Lambda)\p''(\Lambda)}{p^2\Lambda'(p)}\,\dd p,
\end{equation}
where the summation runs over the critical points of $\Lambda$. We also define a $(0,3)$-type tensor as follows:
\begin{equation}\label{LG_c}
    \tilde{c}(\p',\p'',\p''')=\sum_{q:\Lambda'(q)=0}\mathop{\mathrm{Res}}_{p=q}\frac{\p'(\Lambda)\p''(\Lambda)\p'''(\Lambda)}{p^2\Lambda'(p)}\,\dd p.
\end{equation}

\begin{lem}The tensor $\tilde{\eta}$ is a flat metric on $\mathcal{M}_\ell$, and it has flat coordinates
\begin{align}\label{flat coor. of tilde eta}
   \tilde{t}^\al=-\frac{\ell+1}{\alpha}\mathop{\mathrm{Res}}_{p=\infty}\left(\Lambda^{\frac{\alpha}{\ell+1}}(p)\frac{\mathrm{d}p}{p}\right)=a_\alpha+f_\alpha(a_1,\dots,a_{\alpha-1}),\quad\alpha=1,\dots,\ell.
    \end{align}
    Here $f_\alpha$ are homogeneous polynomials of $a_1,\dots,a_{\alpha-1}$ with 
    \[\deg a_\al=\frac{\al}{\ell+1},\quad \al=1,\dots,\ell.\]
\end{lem}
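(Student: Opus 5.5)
The plan is to recognise $\tilde\eta$ as a Hurwitz-type metric with primary differential $\dd p/p$ and to compute it through the residue at $p=\infty$ in a local parameter $k$ with $\Lambda=k^{\ell+1}$. This is the ``thermodynamic identity'' argument of \cite{2d-tft}.

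\textbf{Reduction to a residue at infinity.} Every tangent vector to $\mathcal M_\ell$ acts on $\Lambda$ by
\[\p\Lambda=\sum_{\al=1}^\ell \p(a_\al)\,p^{\ell+1-\al}.\]
The lowest power of $p$ that occurs is $p^1$, because the constant term of $\Lambda$ is frozen at $0$. Hence $\p'(\Lambda)\p''(\Lambda)/p^2$ is a polynomial in $p$. Consequently the $1$-form $\frac{\p'(\Lambda)\p''(\Lambda)}{p^2\Lambda'(p)}\dd p$ has poles only at the critical points of $\Lambda$ and at $p=\infty$; there is no pole at $p=0$. By the residue theorem,
\[\tilde\eta(\p',\p'')=-\res_{p=\infty}\frac{\p'(\Lambda)\p''(\Lambda)}{p^2\Lambda'(p)}\dd p.\]
The same reasoning applies to $\tilde c$, though it is not needed here.

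\textbf{Change to the parameter $k$.} Near $p=\infty$ we introduce $k=\Lambda(p)^{1/(\ell+1)}=p+O(1)$ and invert to get $p=p(k;a)$. Differentiating $\Lambda(p(k;a);a)=k^{\ell+1}$ at fixed $k$ gives the identity $\p\Lambda|_{p}=-\Lambda'(p)\,\p p|_{k}$. We also have $\Lambda'(p)\dd p=(\ell+1)k^\ell\dd k$. Substituting these,
\[\tilde\eta(\p',\p'')=-(\ell+1)\res_{k=\infty}\p'(\log p)\,\p''(\log p)\,k^{\ell}\dd k.\]
Next we expand
\[\log p(k)=\log k+\sum_{m\ge1}b_m(a)\,k^{-m},\]
where $b_m$ is quasi-homogeneous of degree $\frac m{\ell+1}$. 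Using $\res_{k=\infty}k^{\ell-m-n}\dd k=-\delta_{m+n,\ell+1}$, we obtain
\[\tilde\eta(\p',\p'')=(\ell+1)\sum_{m+n=\ell+1}\p'(b_m)\,\p''(b_n).\]
This is a constant anti-diagonal matrix in the functions $b_1,\dots,b_\ell$. Therefore, once $b_1,\dots,b_\ell$ are shown to be coordinates, $\tilde\eta$ is a flat nondegenerate metric and the $b_\al$ are flat coordinates.

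\textbf{Identification with $\tilde t^\al$ and triangularity.} We compute
\[\res_{p=\infty}\Lambda^{\frac{\al}{\ell+1}}\frac{\dd p}{p}=\res_{k=\infty}k^\al\,\dd\log p(k).\]
The only term of $\dd\log p=\dd k/k-\sum_m m\,b_mk^{-m-1}\dd k$ that contributes is the one with $m=\al$, giving the value $\al b_\al$. Hence $\tilde t^\al=-(\ell+1)b_\al$, and these are flat coordinates.

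It remains to show that $\tilde t^\al=a_\al+f_\al(a_1,\dots,a_{\al-1})$. For this we expand
\[\Lambda^{\frac{\al}{\ell+1}}p^{-1}=p^{\al-1}\Bigl(1+\sum_j a_jp^{-j}\Bigr)^{\frac{\al}{\ell+1}}\]
binomially. The coefficient of $p^{-1}$ collects the monomials $a_{j_1}\cdots a_{j_r}$ with $j_1+\dots+j_r=\al$. This coefficient is therefore quasi-homogeneous of degree $\frac{\al}{\ell+1}$, and its only term involving $a_\al$ is the linear term $\frac{\al}{\ell+1}a_\al$. Multiplying by $-\frac{\ell+1}{\al}$ and keeping track of the residue-at-infinity sign convention fixes the normalisation $\tilde t^\al=a_\al+f_\al(a_1,\dots,a_{\al-1})$.

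This triangular form shows that $(\tilde t^1,\dots,\tilde t^\ell)$ is a polynomial change of coordinates on $\mathcal M_\ell$. In particular $\tilde\eta$ is nondegenerate everywhere.

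The main obstacle is to justify the $k$-expansion and the interchange of $\p$ with the residue. In particular one must check that $\p\Lambda|_p=-\Lambda'\p p|_k$ holds as an identity of Laurent series at $\infty$, and that no contribution arises from $p=0$. The latter relies on $\Lambda(0)=0$ being fixed, which is the key difference from the standard $\dd p$ case. Finally, the signs in the two residue computations must be tracked consistently with the normalisation claimed in \eqref{flat coor. of tilde eta}.
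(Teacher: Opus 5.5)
Your proof is correct and takes essentially the same route as the paper. Both arguments pass to the parameter $k=\Lambda^{1/(\ell+1)}$ near $p=\infty$, use $\p\Lambda|_p=-\Lambda'(p)\,\p p|_k$ and $\Lambda'(p)\,\dd p=(\ell+1)k^{\ell}\dd k$, and read off the constant anti-diagonal matrix from $\log(p/k)=-\frac{1}{\ell+1}\sum_{\al}\tilde t^{\al}k^{-\al}+O(k^{-\ell-1})$, which is your identification $\tilde t^\al=-(\ell+1)b_\al$. You additionally make explicit two points the paper leaves implicit: the sum over critical points equals $-\res_{p=\infty}$ because $\Lambda(0)=0$ makes $\p'(\Lambda)\p''(\Lambda)/p^2$ a polynomial, and the triangular form $\tilde t^\al=a_\al+f_\al(a_1,\dots,a_{\al-1})$ follows from the binomial expansion.
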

\begin{proof}
    Let $k$ be the following $(\ell+1)$-th root of $\Lambda(p)$:
    \[k=\Lambda^{\frac{1}{\ell+1}}=p+\frac{a_1}{\ell+1}+O\left(\frac 1p\right),\quad p\to\infty.\]
    By using the definition \eqref{flat coor. of tilde eta}, one can obtain
       \begin{equation}
        \log \frac{p}{k}=-\frac{1}{\ell+1}\sum_{\alpha=1}^\ell \frac{\tilde{t^\alpha}}{k^\alpha}+O\left(\frac{1}{k^{\ell+1}}\right),\quad k\to\infty,
    \end{equation}
    which implies that
    \[-\frac 1p\frac{\p p(k,\tilde{t})}{\p \tilde{t}^\al}=\frac{1}{\ell+1}\frac{1}{k^\al}+O\left(\frac{1}{k^{\ell+1}}\right),\quad k\to\infty.\]
    According to the implicit function theorem, we have
    \[
    \p_{\tilde{t}^\al}(\Lambda(p(k,\tilde{t}))=-\Lambda'(p)\frac{\p p(k,\tilde{t})}{\p \tilde{t}^\al},\quad\dd p=(\ell+1)k^\ell\dd k.
    \]
    So we have 
    \begin{equation}
        \tilde{\eta}(\p_{\tilde{t}^\al},\p_{\tilde{t}^\beta})=-\mathop{\mathrm{Res}}_{k=\infty}\frac{1}{p^2}\frac{\p p}{\p\tilde{t}^\al}\frac{\p p}{\p\tilde{t}^\beta}(\ell+1)k^\ell\dd k=\frac{1}{\ell+1}\delta_{\al+\beta,\ell+1}.
    \end{equation}The lemma is proved.
\end{proof}

Let us define an operation of multiplication on the tangent spaces of $\mathcal M_\ell$ by the following relation:
\begin{equation}\label{zh-01-07-1}\tilde{\eta}(\p'\cdot\p'',\p''')=\tilde{c}(\p',\p'',\p''').\end{equation}
We are to show that this operation yields a Frobenius manifold structure on $\mathcal{M}_\ell$. Suppose $q_1,\dots,q_\ell$ are the distinct critical points of $\Lambda(p)$, then we have
\[\Lambda'(p)=(\ell+1)\prod_{i=1}^\ell(p-q_i).\]
Let $u_i=\Lambda(q_i)$ be the critical values of $\Lambda(p)$. Note that $\p_{u_i}\Lambda(p)|_{p=q_j}=\delta_{ij}$, by using the Lagrange interpolation formula, we have
\begin{equation}
    \label{p alpha lambda}\p_{u_i}\Lambda(p)=\frac{p\Lambda'(p)}{(p-q_i)q_i\Lambda''(q_i)},
\end{equation}
which implies that
\begin{equation}\label{g(papb),eta(papb)}
    \tilde{\eta}(\p_{u_i},\p_{u_j})=\frac{\delta_{ij}}{q_{i}^2\Lambda''(q_i)},\quad\tilde{c}(\p_{u_i},\p_{u_j},\p_{u_k})=\frac{\delta_{ij}\delta_{ik}}{q_i^2\Lambda''(q_i)}.
\end{equation}
From \eqref{zh-01-07-1} it follows that \[\p_{u_i}\cdot\p_{u_j}=\delta_{ij}\p_{u_i},\]
thus $u_1,\dots,u_\ell$ are canonical coordinates of the multiplication. The unity vector field and the Euler vector field are given by
\[\tilde{e}=\sum_{i=1}^\ell\p_{u_i},\quad \tilde{E}=\sum_{i=1}^\ell u_i\p_{u_i}.\]
\begin{lem}
The unity vector field and the Euler vector field can also be represented in the form
\begin{equation}\label{e and E}
    \tilde{e}=-\mathrm{grad}_{\tilde{\eta}}\log a_\ell,\quad \tilde{E}=\sum_{\al=1}^\ell\frac{\al}{\ell+1}a_\al\p_{a_\al}=\sum_{\al=1}^\ell\frac{\al}{\ell+1}\tilde{t}^\al\p_{\tilde{t}^\al}.
\end{equation}
\end{lem}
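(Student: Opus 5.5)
The plan is to prove both identities in \eqref{e and E} by working in the canonical coordinates $u_1,\dots,u_\ell$, where $\tilde e=\sum_i\p_{u_i}$ and $\tilde E=\sum_i u_i\p_{u_i}$. Throughout I will work at generic points of $\mathcal M_\ell$, where the critical points $q_1,\dots,q_\ell$ are distinct and nonzero and $a_\ell\neq0$; the identities then extend by continuity and analyticity.

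\emph{The unity.} It suffices to show that $\tilde\eta(\tilde e,\p')=-\p'(\log a_\ell)$ for every tangent vector $\p'$. By \eqref{p alpha lambda}, the function $\p_{u_i}\Lambda(p)/\Lambda'(p)$ has no pole at $q_j$ for $j\ne i$ and has a simple pole at $q_i$. Hence in \eqref{LG_eta} only the residue at $q_i$ survives, and
\[
\tilde\eta(\p_{u_i},\p')=\frac{\p'\Lambda(q_i)}{q_i^2\Lambda''(q_i)}=\mathop{\mathrm{Res}}_{p=q_i}\frac{\p'\Lambda(p)}{p^2\Lambda'(p)}\,\dd p .
\]
Summing over $i$ gives $\tilde\eta(\tilde e,\p')$ as the sum of the residues of $\omega=\p'\Lambda(p)\,\dd p/(p^2\Lambda'(p))$ at all critical points. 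I then apply the residue theorem to $\omega$ on $\mathbb P^1$.

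\begin{enumerate}[(i)]
\item At $p=\infty$: $\p'\Lambda$ is a polynomial of degree $\le\ell$ (the leading coefficient of $\Lambda$ is fixed), while $p^2\Lambda'$ has degree $\ell+2$. So $\omega=O(p^{-2})\,\dd p$ and there is no residue at infinity.
\item At $p=0$: $\p'\Lambda(p)=\p'a_\ell\,p+O(p^2)$ because $\Lambda$ has no constant term, and $\Lambda'(0)=a_\ell$. So $\omega$ has a simple pole at $0$ with residue $\p'a_\ell/a_\ell$.
\end{enumerate}

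Therefore
\[
\tilde\eta(\tilde e,\p')=-\frac{\p' a_\ell}{a_\ell}=-\p'\log a_\ell,
\]
which is exactly $\tilde e=-\mathrm{grad}_{\tilde\eta}\log a_\ell$.

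\emph{The Euler field.} I will use the scaling symmetry. For $c>0$ put $p=c^{\frac1{\ell+1}}\tilde p$. Then
\[
c\,\Lambda(\tilde p;a_1,\dots,a_\ell)=\Lambda\bigl(p;c^{\frac1{\ell+1}}a_1,\dots,c^{\frac\ell{\ell+1}}a_\ell\bigr),
\]
and the critical values of the right-hand side are $c\,u_i$. So the one-parameter group $u_i\mapsto c\,u_i$ acts on the coefficients by $a_\alpha\mapsto c^{\alpha/(\ell+1)}a_\alpha$. Differentiating at $c=1$ gives
\[
\sum_i u_i\p_{u_i}=\sum_\alpha\frac{\alpha}{\ell+1}a_\alpha\p_{a_\alpha}.
\]
The lemma above shows that each $\tilde t^\alpha$ is quasi-homogeneous of degree $\frac{\alpha}{\ell+1}$ in the $a$'s, since $a_\alpha$ plus a homogeneous polynomial of that degree. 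Hence this same vector field equals $\sum_\alpha\frac{\alpha}{\ell+1}\tilde t^\alpha\p_{\tilde t^\alpha}$.

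The only delicate point is the residue bookkeeping in the unity computation: I must confirm that no spurious contributions arise at $p=0$ or at $p=\infty$ beyond the single term computed in (ii). Both checks rely on the normalization that $\Lambda$ has leading coefficient $1$ and no constant term.
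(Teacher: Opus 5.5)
Your proof is correct. For the Euler field it is the paper's argument: the paper just invokes the homogeneity of $u_i$ and $\tilde t^\alpha$, and your scaling $p\mapsto c^{1/(\ell+1)}p$ spells that out.

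For the unity you take a genuinely different route. The paper works with the special vectors $\partial_{u_i}$. It reads off $\partial_{u_i}a_\ell=-a_\ell/(q_i^2\Lambda''(q_i))$ as the coefficient of $p$ in the interpolation formula \eqref{p alpha lambda}, which amounts to using $\Lambda'(0)=a_\ell$. It then uses the diagonal form $\tilde\eta(\partial_{u_i},\partial_{u_j})=\delta_{ij}/(q_i^2\Lambda''(q_i))$ to get $\mathrm{grad}_{\tilde\eta}\log a_\ell=\sum_i q_i^2\Lambda''(q_i)\,\partial_{u_i}(\log a_\ell)\,\partial_{u_i}=-\sum_i\partial_{u_i}$.

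You instead evaluate the $1$-form $\tilde\eta(\tilde e,\cdot)$ on an arbitrary vector $\partial'$. The residue theorem then moves the sum over critical points to the two remaining poles $p=0$ and $p=\infty$ of $\partial'\Lambda\,\dd p/(p^2\Lambda')$. Both arguments hinge on the same fact, $\Lambda'(0)=a_\ell$.

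The paper's computation is shorter and entirely local in the canonical frame. Yours is basis-free and never needs $\partial_{u_i}a_\ell$ explicitly. It also shows clearly that $\log a_\ell$ comes from the extra pole at $p=0$ created by the weight $p^{-2}$ in \eqref{LG_eta}. The same bookkeeping therefore carries over to other residue pairings of this type, with the poles of the weight playing the role of $p=0$.
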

\begin{proof}Note that
\[\p_{u_i}\Lambda(p)=\sum_{\al=1}^\ell(\p_{u_i}a_\al)p^{\ell+1-\al},\]
one can obtain from \eqref{p alpha lambda}  that
\[
\p_{u_i}a_\ell=\frac{(-1)^{\ell-1}(\ell+1)\sigma_\ell(q_i)}{q_i^2\Lambda''(q_i)}=\frac{-a_\ell}{q_i^2\Lambda''(q_i)}.
\]
By using \eqref{g(papb),eta(papb)}, we have
\[\tilde{e}=\sum_{i=1}^\ell\p_{u_i}=-\mathrm{grad}_{\tilde{\eta}}\log a_\ell.\]
The second formula comes from the homogeneity of $u_i$ and $\tilde{t}^\al$.
\end{proof}

\begin{lem}\label{25-12-17-1}
    If there locally exists a function $\varphi$, such that the unity can be represented as
    \begin{equation}\label{zh-01-08-1}
    \tilde{e}=\mathrm{grad}_\eta\varphi,\end{equation}
    then the $(0,4)$-type tensor $\tilde{c}_{\al\beta\gamma\xi}:=\partial_{\tilde{t}^\xi} \tilde{c}(\p_{\tilde{t}^\al},\p_{\tilde{t}^\beta},\p_{\tilde{t}^\gamma})$ is symmetric.
\end{lem}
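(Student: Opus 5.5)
The idea is to turn the claim into a tensorial statement and check it in the canonical coordinates $u_1,\dots,u_\ell$ introduced above. Since $\tilde t^1,\dots,\tilde t^\ell$ are flat coordinates of $\tilde\eta$, the Christoffel symbols of $\tilde\eta$ vanish in them. Hence
\[
\tilde c_{\al\beta\gamma\xi}=\partial_{\tilde t^\xi}\tilde c(\p_{\tilde t^\al},\p_{\tilde t^\beta},\p_{\tilde t^\gamma})=(\nabla_{\p_{\tilde t^\xi}}\tilde c)(\p_{\tilde t^\al},\p_{\tilde t^\beta},\p_{\tilde t^\gamma}),
\]
where $\nabla$ is the Levi-Civita connection of $\tilde\eta$. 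The tensor $\tilde c$ is already symmetric in its three arguments, so it suffices to prove that the $(0,4)$-tensor $\nabla\tilde c$ is symmetric under exchanging the differentiation slot with one argument slot. This is coordinate independent. I will verify it on the dense open set where the critical points $q_i$ are distinct, so that $u_1,\dots,u_\ell$ are local coordinates. The identity then extends to all of $\mathcal M_\ell$ by continuity, since $\tilde c_{\al\beta\gamma\xi}$ is analytic in the $\tilde t^\al$.

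By \eqref{g(papb),eta(papb)}, in canonical coordinates we have $\tilde\eta=\sum_i h_i\,\dd u_i^2$ and $\tilde c=\sum_i h_i\,\dd u_i^3$, with $h_i=1/(q_i^2\Lambda''(q_i))$. The Christoffel symbols of this diagonal metric are, for $i\ne j$ and with $k$ distinct from $i,j$,
\[
\Gamma^i_{ij}=\frac{\p_j h_i}{2h_i},\quad \Gamma^i_{jj}=-\frac{\p_i h_j}{2h_i},\quad \Gamma^i_{ii}=\frac{\p_i h_i}{2h_i},\quad \Gamma^i_{jk}=0 .
\]
Using these, I will compute $(\nabla_k\tilde c)_{ijl}$ for each pattern of indices:
\begin{itemize}
\item For $k\ne i$, the components $(\nabla_k \tilde c)_{iii}$ and $(\nabla_i\tilde c)_{kii}$ are both equal to $-\tfrac12\p_k h_i$.
\item Every component involving three distinct indices vanishes, because $\Gamma^i_{kj}=0$ for distinct $i,j,k$ and $\tilde c$ is diagonal.
\item Components with all four indices equal are trivially symmetric.
\item The only remaining case is the pattern $(j;i,i,j)$ with $i\ne j$. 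A direct computation gives
\[
(\nabla_j\tilde c)_{iij}=-\Gamma^i_{jj}h_i=\tfrac12\p_i h_j,\qquad (\nabla_i\tilde c)_{jij}=-\Gamma^j_{ii}h_j=\tfrac12\p_j h_i .
\]
\end{itemize}
Therefore $\nabla\tilde c$ is totally symmetric if and only if $\p_i h_j=\p_j h_i$ for all $i,j$.

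To finish, I use the hypothesis \eqref{zh-01-08-1}. Since $\tilde e=\sum_i\p_{u_i}$ is the gradient of $\varphi$, we get
\[
\dd\varphi=\tilde\eta(\tilde e,\cdot)=\sum_i h_i\,\dd u_i ,
\]
so $h_i=\p_{u_i}\varphi$. This gives $\p_j h_i=\p_i\p_j\varphi=\p_i h_j$, which is exactly the symmetry condition above. (Here the gradient in \eqref{zh-01-08-1} is understood with respect to $\tilde\eta$.)

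I expect the main obstacle to be bookkeeping rather than anything conceptual. One has to enumerate the index patterns of $\nabla\tilde c$ carefully, keep track of the multiplicity factors such as the $2$ in $-2\Gamma^m_{ji}c_{mij}$, and confirm that every pattern other than $(j;i,i,j)$ is automatically symmetric without any condition on the $h_i$.
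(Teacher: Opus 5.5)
Your proof is correct, but it takes a different route from the paper's.

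\textbf{The paper's route.} The paper works with the Lam\'e coefficients $\sqrt{f_i}$ (its $f_i$ is your $h_i$) and the rotation coefficients $\gamma_{ij}=\partial_{u_j}f_i/(2\sqrt{f_if_j})$. It notes that the hypothesis $f_i=\partial_{u_i}\varphi$ makes $\tilde\eta$ an Egoroff metric, i.e.\ $\gamma_{ij}=\gamma_{ji}$. It then expands the flat frame as $\partial_{\tilde t^\alpha}=\sum_i\psi_{i\alpha}\mathrm{e}_i$ and uses the linear system satisfied by the $\psi_{i\alpha}$ together with $\tilde c_{\alpha\beta\gamma}=\sum_j\psi_{j\alpha}\psi_{j\beta}\psi_{j\gamma}/\sqrt{f_j}$. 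This gives an explicit formula for $\tilde c_{\alpha\beta\gamma\xi}-\tilde c_{\xi\beta\gamma\alpha}$ whose coefficients are $\gamma_{ij}-\gamma_{ji}$.

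\textbf{Your route.} You observe that in flat coordinates the partial derivative is the covariant derivative. The claim therefore becomes total symmetry of the tensor $\nabla\tilde c$, which can be checked in the canonical coordinates using only the Christoffel symbols of a diagonal metric. Your component computations are right: the patterns $(k;i,i,i)$ and $(i;k,i,i)$ both give $-\tfrac12\partial_k h_i$, mixed patterns with three distinct indices vanish, and $(j;i,i,j)$ versus $(i;j,i,j)$ give $\tfrac12\partial_i h_j$ versus $\tfrac12\partial_j h_i$. The resulting condition $\partial_{u_i}h_j=\partial_{u_j}h_i$ is exactly the Egoroff condition in the paper's language.

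\textbf{Comparison.} Your argument is shorter and fully explicit. It avoids the flat-frame system, whose derivation the paper only sketches, and it shows the condition is necessary as well as sufficient. The paper's argument stays inside the standard Darboux--Egoroff framework and ties the result directly to the flat frame.

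\textbf{A small precision.} For $u_1,\dots,u_\ell$ to be genuine coordinates and the $h_i$ to be finite, you need more than distinct critical points. In the $A_\ell$ setting you also need $q_i\neq0$, i.e.\ $a_\ell\neq0$. So your dense open set should exclude that locus as well.
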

\begin{proof}
    Denote $f_i=1/(q_i^2\Lambda''(q_i))$, then we have
    \[\tilde{\eta}(\p_{u_i},\p_{u_j})=f_i\delta_{ij},\quad\tilde{c}(\p_{u_i},\p_{u_j},\p_{u_k})=f_i\delta_{ij}\delta_{ik}.\]
    From the condition \eqref{zh-01-08-1} we see that
    \begin{equation}\label{zh-01-08-2}
    f_i=\frac{\p\varphi}{\p u_i},\quad i=1,\dots,\ell.
    \end{equation}
    We denote the Lam\'e coefficients and the rotation coefficients respectively by
    \[h_i=\sqrt{f_i},\quad\gamma_{ij}=\frac{1}{2\sqrt{f_if_j}}\frac{\p f_i}{\p u_j},\quad i\not=j.\]
    It follows from \eqref{zh-01-08-2} that $\gamma_{ij}=\gamma_{ji}$, i.e., $\tilde{\eta}$ is an Egoroff metric.

 Let $\mathrm{e}_i=\frac{1}{h_i}\p_{u_i}, i=1,\dots,\ell$ be the orthogonal frame. Suppose the flat frame has the following expansion w.r.t. the orthogonal frame:
    \begin{equation}\label{zh-01-08-3}  \p_{\tilde{t}^\al}=\sum_{i=1}^\ell\psi_{i\al}\mathrm{e}_i,\quad\al=1,\dots,\ell.\end{equation}
    By calculating the covariant derivatives $\nabla_{u_j}(\p_{\tilde{t}^\al})$, one can obtain
    \begin{align}
    &\frac{\p\psi_{j\al}}{\p u_i}=\gamma_{ji}\psi_{i\al},\quad i\not=j,\label{zh-01-08-4a}\\
    &\frac{\p\psi_{i\al}}{\p u_i}=-\sum_{k\not=i}\gamma_{ki}\psi_{k\al}.\label{zh-01-08-4b}
    \end{align}
 On the other hand, 
 \begin{equation}\label{zh-01-08-5}
 \tilde{c}_{\al\beta\gamma}=\tilde{c}(\p_{\tilde{t}^\al},\p_{\tilde{t}^\beta},\p_{\tilde{t}^\gamma})=\sum_{j=1}^\ell\frac{\psi_{j\al}\psi_{j\beta}\psi_{j\gamma}}{h_j}.\end{equation}
    By using \eqref{zh-01-08-3}-\eqref{zh-01-08-5}, we can show that
    \[\tilde{c}_{\al\beta\gamma\xi}-\tilde{c}_{\xi\beta\gamma\alpha}=\frac12\sum_{i\not=j}\frac{\gamma_{ij}-\gamma_{ji}}{h_ih_j}(\psi_{i\al}\psi_{j\xi}-\psi_{j\al}\psi_{i\xi})(\psi_{i\beta}\psi_{j\gamma}+\psi_{j\gamma}\psi_{i\beta}).\]
    Since $\gamma_{ij}=\gamma_{ji}$, we have $\tilde{c}_{\al\beta\gamma\xi}=\tilde{c}_{\xi\beta\gamma\alpha}$, the lemma is proved.
\end{proof}

From Lemma \ref{25-12-17-1}, we see that $(\mathcal{M}_\ell,\tilde{\eta},\cdot)$ gives a generalized Frobenius manifold structure. It is easy to verify that the Euler vector field $\tilde{E}$ satisfies the following relations:
\begin{align}
    &\mathcal{L}_{\tilde{E}}(\p'\cdot\p'')=\mathcal{L}_{\tilde{E}}\p'\cdot\p''+\p'\cdot\mathcal{L}_{\tilde{E}}\p''+\p'\cdot\p'',\label{condition-3-1}\\
    &\mathcal{L}_{\tilde{E}}\tilde{\eta}(\p',\p'')=\tilde{\eta}(\mathcal{L}_{\tilde{E}}\p',\p'')+\tilde{\eta}(\p',\mathcal{L}_{\tilde{E}}\p'')+\tilde{\eta}(\p',\p''),\label{condition-3-2}
\end{align}
so $\mathcal{M}_\ell$ is a conformal generalized Frobenius manifold with charge $d=1$. The intersection form of $\mathcal{M}_\ell$ can be represented by the following residue formula:
\begin{equation}\label{LG_g}
    \tilde{g}(\p',\p'')=\sum_{q:\Lambda'(q)=0}\mathop{\mathrm{Res}}_{p=q}\frac{\p'(\Lambda)\p''(\Lambda)}{p^2\Lambda(p)\Lambda'(p)}\,\dd p.
\end{equation}
It is defined on $\mathcal{M}_\ell\setminus\Sigma_0$, where 
\[\Sigma_0:=\{p | \Lambda(p)=0,\Lambda'(p)=0\}.\]
In terms of the canonical coordinates, we have
\begin{equation}
    \tilde{g}(\mathrm{d}u_i,\mathrm{d}u_j)=u_i q_i^2\Lambda''(q_i)\delta_{ij}.
\end{equation}

Let us proceed to show that the generalized Frobenius manifold structures on $\mathcal{M}_\ell$ and $\mathcal{M}_D(A_\ell,\omega_\ell)$ are isomorphic. We factorize the polynomial $\Lambda(p)$ as
\begin{equation}\label{2.6-21}
    \Lambda(p)=p\prod_{\al=1}^{\ell}(p-e^{i\phi_\al}),
\end{equation}
and let $h\colon\mathcal{M}_D(A_\ell,\omega_\ell)\to\mathcal{M}_\ell$ be induced by the map
\begin{equation}
(x^1,\dots,x^{\ell})\mapsto(\phi_1,\dots,\phi_{\ell}),
\end{equation}
where \begin{equation}\label{xi-to-phi}
    \phi_1=2\pi x^1,\quad\phi_\al=2\pi(x^\al-x^{\al-1}),\quad \al=2,\dots,\ell.
\end{equation}
From (\ref{zzh-2}),(\ref{y j for A-type}),(\ref{2.6-21}) and (\ref{xi-to-phi}), it follows that the map $h\colon(y^1,\dots,y^\ell)\mapsto(a_1,\dots,a_\ell)$ is given by
    \begin{equation}\label{a=y}
        a_\al=(-1)^\al y^\al |_{\lambda=0},\quad \al=1,\dots,\ell,
    \end{equation}
so we can know that $h$ is a diffeomorphism, and here the polynomial $\Lambda(p)$ coincides with $f_0(z):=f(z)|_{\lambda=0}$ in (\ref{2.3}).

\begin{thm}\label{9-1}
The map $h$ is an isomorphism between generalized Frobenius manifolds.
\end{thm}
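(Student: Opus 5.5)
The strategy is to compare the two structures through a small set of generating data. A generalized Frobenius manifold of charge $d=1$ with a fixed unity and Euler field is determined by three things: its flat metric, its intersection form, and the Euler vector field. The multiplication is then recovered from the Christoffel symbols of the intersection form via \eqref{zh-06-02e}, equivalently from the potential through
\[\p_\al\p_\beta F=\frac{1}{2-d_\al-d_\beta}\eta_{\al\xi}\eta_{\beta\zeta}g^{\xi\zeta}.\]
So I will show that $h$ pulls back $\tilde\eta$, $\tilde g$ and $\tilde E$ to fixed constant multiples of $\eta$, $g$ and $E$, with the multiples compatible with one another. A global constant rescaling of the metric then accounts for the factor $\ell+1$, since $\eta^{\al\beta}=(\ell+1)\delta_{\al+\beta,\ell+1}$ while $\tilde\eta$ has entries $\frac1{\ell+1}$. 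I also expect possible sign changes $\tilde t^\al\mapsto(-1)^\al t^\al$, coming from \eqref{a=y}.

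\textbf{Step 1 (Euler and unity).} By \eqref{a=y}, $h$ sends $y^\al|_{\lambda=0}$ to $(-1)^\al a_\al$ and preserves degrees. Hence $h_*E=\tilde E$, using \eqref{zh-07-17-3} and \eqref{e and E}. The unity $e=-\eta^\sharp(\dd\log y^\ell)$ from \eqref{the unity for A-type1} and $\tilde e=-\mathrm{grad}_{\tilde\eta}\log a_\ell$ then match as soon as the metrics match up to the right constant.

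\textbf{Step 2 (intersection forms).} This is the key computation. On the $A_\ell$ side, \eqref{g lambda ab} at $\lambda=0$ gives $g$ in terms of the roots $u_j$ of $f_0(u)=u\prod(u-u_j)$. Concretely, $g$ is the pushforward of the constant metric $\sum_{r,s}\bigl(\delta_{rs}-\frac1{\ell+1}\bigr)\dd\log u_r\,\dd\log u_s$ restricted to the locus where one root vanishes. It is cleaner to work directly in the $x$-coordinates via $\phi_\al$ from \eqref{xi-to-phi}, where $g$ is the constant form $a^{rs}$ pushed forward.

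On the LG side, I would compute $\tilde g$ in the coordinates $\phi_\al$, with $\Lambda=p\prod(p-e^{i\phi_\al})$, using the residue formula \eqref{LG_g}. Since $\p_{\phi_\al}\Lambda=-ie^{i\phi_\al}\Lambda/(p-e^{i\phi_\al})$, the integrand has the form
\[\frac{\Lambda}{p^2\Lambda'}\cdot\frac{e^{i\phi_\al}e^{i\phi_\beta}}{(p-e^{i\phi_\al})(p-e^{i\phi_\beta})}.\]
The sum of residues at the critical points equals minus the residues at $p=0$, at $p=e^{i\phi}$ and at $\infty$. This evaluation should produce a constant matrix proportional to the inverse of the Cartan-type matrix, i.e.\ $\tilde g(\dd\phi,\dd\phi)$ constant and equal to a multiple of $(a^{rs})$. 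Matching this constant gives $h^*\tilde g=c\,g$.

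\textbf{Step 3 (flat metrics).} For $\eta$, I would avoid recomputing anything. On the $\mathcal M_D$ side, $\eta=\mathcal L_{\p_\lambda}g_\lambda$ is the flat pencil partner of $g$. On the LG side I would show that $\tilde\eta=\mathcal L_{\tilde e}\tilde g$ up to sign, as is standard for Hurwitz spaces. Alternatively, one can compare directly in flat coordinates: both metrics have constant antidiagonal form, the flat coordinates are determined by degree and linear part (Theorem \ref{flat metric of A} and \eqref{flat coor. of tilde eta}), and both structures satisfy Theorem \ref{main}-type relations with $e=\mathrm{grad}(-\log y^\ell)$. The cleanest route is to note that on $\mathcal M_D$ we have $\eta=\mathcal L_e g$ up to a constant. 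This follows because $\p_\lambda$ corresponds to $e$ in the construction of \cite{JTZ2025-1}, and it holds on both sides once $e$ is identified. Then Steps 1–2 give $h^*\tilde\eta=c\,\eta$, and the multiplications agree via \eqref{zh-06-02e}.

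The main obstacle is Step 2, the residue evaluation of $\tilde g$ in the $\phi$-coordinates and getting the constants and signs right, together with justifying that $\tilde\eta$ is recovered from $\tilde g$ by the Lie derivative along the unity. If the latter is awkward, I would instead verify $\tilde\eta$ directly in the $\phi$-coordinates by the same residue computation, using \eqref{LG_eta} without the factor $1/\Lambda$, and compare it with $\eta^{ij}$ from \eqref{eta ab}.
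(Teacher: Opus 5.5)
\textbf{The gap is in Step 3, where the substance of the proof lies.}

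\emph{The Lie-derivative route fails.} The relation $\eta=\mathrm{const}\cdot\mathcal L_e g$ belongs to ordinary Frobenius manifolds, where $e$ is flat and generates the shift that produces the pencil. Here $e=-\mathrm{grad}_\eta\log y^\ell$ is not flat, and nothing makes $\p_\lambda$ correspond to $e$. The relation is in fact false. With the paper's $A_2$ data ($g^{11}=2t^2-\frac13(t^1)^2$, $g^{12}=-t^1t^2+\frac1{18}(t^1)^3$, and $e$ as given there) one finds $(\mathcal L_e g)^{11}=12t^1/\bigl((t^1)^2+6t^2\bigr)$, whereas $\eta^{11}=0$.

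\emph{Your other alternative is circular.} Knowing that $\eta$ and $\tilde\eta$ are each constant anti-diagonal in their own quasi-homogeneous flat coordinates does not give $h^*\tilde t^\alpha=\pm t^\alpha$ unless you already know $h^*\tilde\eta=\eta$.

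\emph{What survives is your fallback, which is the paper's argument.} The paper compares $\tilde\eta$ directly with \eqref{eta ab}. It does so in the coordinates $a_\alpha$, where $\eta$ is polynomial, rather than in the $\phi_\alpha$: it uses $\dd u_i=\sum_\alpha q_i^{\ell+1-\alpha}\dd a_\alpha$, the inverse Vandermonde matrix, and the residue trick of Lemma \ref{1 term}.

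\emph{How to realize your original idea.} If you want to obtain $\eta$ from the pencil, replace $\mathcal L_e$ by the deformation $\Lambda\mapsto\Lambda-\epsilon$, which leaves the family $p^{\ell+1}+\dots+a_\ell p$ exactly as $f=f_0+(-1)^{\ell+1}\lambda$ does. In canonical coordinates its intersection form is $(u_i-\epsilon)q_i^2\Lambda''(q_i)\delta_{ij}$, i.e.\ $\tilde g-\epsilon\tilde\eta$. Your Step 2 residue computation, run with all $\ell+1$ roots of $f$, identifies this with $g_\lambda$ for $\epsilon=(-1)^\ell\lambda$. The double pole at $p=0$ drops out because $\sum_j\dd\log u_j=0$.

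\emph{Step 2 itself is fine}, and more direct than the paper's passage through canonical coordinates. Note that \eqref{LG_g} gives covariant components. The residues at $p=0$ and $p=\mathrm{e}^{i\phi_\alpha}$ give $\tilde g(\p_{\phi_\alpha},\p_{\phi_\beta})=1+\delta_{\alpha\beta}$, whose inverse is $\delta_{\alpha\beta}-\frac1{\ell+1}=g(\dd\phi_\alpha,\dd\phi_\beta)$.

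\textbf{Your bookkeeping of constants and signs also needs correcting.}

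\emph{There is no factor $\ell+1$ to absorb.} The lemma on $\tilde\eta$ computes covariant components $\frac1{\ell+1}\delta_{\alpha+\beta,\ell+1}$, so $\tilde\eta^{\alpha\beta}=(\ell+1)\delta_{\alpha+\beta,\ell+1}$, the same as $\eta^{\alpha\beta}$. Moreover, an isomorphism ``up to constant multiples'' is not what is claimed: rescaling $\eta$ rescales the $\eta$-gradient $e$ and the product.

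\emph{The sign does not cancel for even $\ell$.} Keep the factor $(-1)^{\alpha-1}$ in
$\p_{u_i}a_\alpha=(\ell+1)(-1)^{\alpha-1}\sigma_{\alpha-1}(\widehat{q_i})/\bigl(q_i\Lambda''(q_i)\bigr)$.
The direct computation then gives $\tilde\eta(\dd a_\alpha,\dd a_\beta)=(2\ell+2-\alpha-\beta)\,a_{\alpha+\beta-\ell-1}$ with $a_0=1$. For $\ell=2$ this is $\tilde\eta(\dd a_1,\dd a_2)=3$ and $\tilde\eta(\dd a_2,\dd a_2)=2a_1$.
\begin{itemize}
\item With $a_\alpha=(-1)^\alpha y^\alpha|_{\lambda=0}$ from \eqref{a=y} this yields $h^*\tilde\eta=(-1)^{\ell+1}\eta$, consistent with the pencil argument above.
\item On the other hand $h^*\tilde g=g$ holds for every $\ell$.
\item Against the paper's own $A_2$ example ($\eta^{12}=3$, $\eta^{22}=2y^1$), the pulled-back entries are $-3$ and $-2y^1$.
\end{itemize}
So for even $\ell$ your ``compatible multiples'' are genuinely incompatible, and the paper's assertion $h^*\tilde\eta=\eta$ fails by this sign.

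\emph{The fix.} The identification that works for all $\ell$ is $a_\alpha=y^\alpha|_{\lambda=0}$, i.e.\ $\Lambda(p)=p\prod_\alpha(p+\mathrm{e}^{i\phi_\alpha})$. This change does not affect $g$, because its entries have exact weight $\alpha+\beta$ in the $y$'s. The entries of $\eta$ have weight $\alpha+\beta-\ell-1$, which is why $\eta$ picks up the factor $(-1)^{\ell+1}$.
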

\begin{proof}
From (\ref{e and E}) and (\ref{a=y}), it follows that 
\[h_*e=\tilde{e},\quad h_*{E}=\tilde{E}.\]
Since $\deg \tilde{t}^\al=d_\al>0$, we only need to prove that 
\[h^*\tilde{\eta}=\eta,\quad h^*\tilde{g}=g. \]

According to the definition of the canonical coordinates, we have 
\[\mathrm{d}u_i=\sum_{\al=1}^\ell q_i^{\ell+1-\al}\mathrm{d}a_\al.\]
 By using the inverse of the Vandermonde matrix, we obtain
\[\tilde{\eta}(\mathrm{d}a_\al,\dd a_\beta)=(\ell+1)^2\sum_{s=1}^\ell\frac{\sigma_{\al-1}(\widehat{q_s})\sigma_{\beta-1}(\widehat{q_s})}{\Lambda''(q_s)},\]
here the notations $\sigma_{\al-1}(\hat{q}_s), \sigma_{\beta-1}(\widehat{q_s})$ are introduced in Section \ref{zh-07-12-1}.

By using the same method that is employed in the proof of Lemma \ref{1 term}, we can show that
\begin{align*}
    \tilde{\eta}(\mathrm{d}a_\al,\dd a_\beta)=\begin{cases}
    (-1)^{\al+\beta-\ell-1}(2\ell+2-\al-\beta)a_{\al+\beta-\ell-1},&\al+\beta\ge\ell+2,\\
    \ell+1,&\al+\beta=\ell+1,\\
    0,&\al+\beta\le\ell,
    \end{cases}
\end{align*}
so we have $h^*\tilde{\eta}=\eta$.

From \eqref{p alpha lambda} and \eqref{2.6-21}, it follows that
\[\p_{u_i}\Lambda(p)=-\sum_{\al=1}^\ell \frac{ie^{i\phi_\al}\Lambda(p)}{p-e^{i\phi_\al}}\p_{u_i}\phi_\al,\quad\frac{\partial\phi_\al}{\partial u_i}=\frac{i}{(e^{i\phi_\al}-q_i)q_i\Lambda''(q_i)}.\]
Then we have
\begin{align*}
    \tilde{g}(\mathrm{d}\phi_\al,\mathrm{d}\phi_\beta)&=\sum_{j,k=1}^\ell\tilde{g}(\mathrm{d}u_j,\mathrm{d}u_k)\frac{\partial\phi_\al}{\partial u_j}\frac{\partial\phi_\beta}{\partial u_k}\\
    &=-\sum_{j=1}^\ell\frac{u_j}{(e^{i\phi_\al}-q_j)(e^{i\phi_\beta}-q_j)\Lambda''(q_j)}\\
    &=-\sum_{j=1}^\ell\mathop{\mathrm{Res}}_{p=q_j}\frac{\Lambda(p)}{(p-e^{i\phi_\al})(p-e^{i\phi_\beta})\Lambda'(p)}\mathrm{d}p\\
    &=\big(\mathop{\mathrm{Res}}_{p=e^{i\phi_\al}}+\mathop{\mathrm{Res}}_{p=e^{i\phi_\beta}}+\mathop{\mathrm{Res}}_{p=\infty}\big)\frac{\Lambda(p)}{(p-e^{i\phi_\al})(p-e^{i\phi_\beta})\Lambda'(p)}\mathrm{d}p\\
    &=\delta_{\alpha\beta}-\frac{1}{\ell+1},
\end{align*}
which implies that $h^*\tilde{g}=g$. The theorem is proved.
\end{proof}

\begin{rem}
The introduction of the generalized Frobenius manifold structure on $\mathcal{M}_\ell$ in terms of the residue formulae \eqref{LG_eta} and \eqref{LG_c} is also given by Zhonglun Cao in his Ph.D. thesis \cite{Z.Cao}. In his thesis he  attempted to construct this generalized Frobenius manifold structure by using the geometry of the orbit space of a certain extension of the affine Weyl group of type $A_\ell$, however, he did not provide a rigorous construction. 
He also showed in \cite{Z.Cao} that this generalized Frobenius manifold can be obtained from the dispersionless limit of the bihamiltonian structure of the $q$-deformed Gelfand–Dickey hierarchy \cite{FR}. 
\end{rem}

\section{The Case of $(C_\ell, \omega_1)$}\label{C-type}
\subsection{The $W_a(R)$-invariant $\lambda$-Fourier polynomial ring}\label{3.1}
Let  $R$ be the root system of type $C_\ell$ in an $\ell$-dimensional Euclidean space $V$ with orthonormal basis $e_1,\dots,e_\ell$. We take the simple roots
\[\al_1=e_1-e_2,\dots,\al_{\ell-1}=e_{\ell-1}-e_\ell,\quad \al_\ell=2 e_\ell.\]
The coroots and the fundamental weights are given by
\begin{align*}
&\al_j^\vee=\al_j,\quad \al_\ell^\vee=\frac12\al_\ell,\quad j=1,\dots,\ell-1.\\
&\omega_j=\al_1+2\al_2+\dots+(j-1)\al_{j-1}+j\left(\al_j+\dots+\al_{\ell-1}+\frac12\al_\ell\right),\quad j=1,\dots,\ell.
\end{align*}
Take $\omega=\omega_1$, then we have
\[\theta_j=(\omega_j,\omega_1)=1,\quad j=1,\dots,\ell,\]
and $\kappa=1$. We define $\xi^1,\dots, \xi^{\ell}$ by the relation
\[ c\omega_1 + x^1\alpha_1^\vee + \dots + x^\ell\alpha_\ell^\vee= c\omega_1 +\xi^1 e_1+\dots+\xi^\ell e_\ell,\]
and denote 
\begin{align*}
    \zeta^j=e^{2\pi i\xi^j}+e^{-2\pi i\xi^j}, \quad j=1, \dots, \ell.
\end{align*}
The basic generators of the $W_a(R)$-invariant $\lambda$-Fourier polynomial ring $\mathscr A^W$ can be represented in the form \cite{bourbaki}
\begin{align}
    y^j:=y^j(\textbf{x})=\lambda \sigma_j(\zeta^1,\dots,\zeta^\ell), \quad j=1, \dots, \ell,
\end{align}
here $\lambda=e^{-2\pi i c}$. Let $y^0=\lambda$, then we have the following generating function for $y^1,\dots, y^\ell$:
\begin{align}\label{P(u)}
P(u)=\sum_{j=0}^\ell y^ju^{\ell-j}=\lambda\prod_{k=1}^\ell(u+\zeta^k).
\end{align}

\subsection{The pencil generators}
Unlike the $(R,\omega)=(A_\ell, \omega_\ell)$ case that we studied in the last section, the basic $y^1, \dots, y^\ell$ are not pencil generators of $\mathscr A^{W}$. In order to find a set of pencil generators of $\mathscr A^{W}$, we need to compute explicitly the metric $g_\lambda$ defined by \eqref{zh-07-11-1} and the contravariant components of its Levi-Civita connection.

\begin{lem}
 The following formulae hold true for the generating functions of the metric $g_\lambda$ and the contravariant components of its Levi-Civita connection
 in the coordinates $y^1,\dots, y^\ell$:
    \begin{align}
        \sum_{i, j=1}^\ell g_\lambda^{ij}(y)u^{\ell-i}v^{\ell-j}
        =&-\ell P(u)P(v)+\frac{u^2-4}{u-v}P'(u)P(v)-\frac{v^2-4}{u-v}P(u)P'(v), \label{generating of g lambda ij}\\
        \sum_{i, j, k=1}^\ell\Gamma^{ij}_{\lambda, k}(y)\mathrm{d}y^ku^{\ell-i}v^{\ell-j}
        =&-\ell P(u)\mathrm{d}P(v)+\frac{u^2-4}{u-v}P'(u)\mathrm{d}P(v)-\frac{v^2-4}{u-v}P(u)\mathrm{d}P'(v)\notag\\
        &+\frac{uv-4}{(u-v)^2}\left(P(v)\mathrm{d}P(u)-P(u)\mathrm{d}P(v)\right),\label{generating of Gamma lambda k ij}
        \end{align}
where $P'(u)=\frac{\p P}{\p u}(u),\, P'(v)=\frac{\p P}{\p v}(v)$.
\begin{proof}(cf.  \cite{Advance})
From \eqref{zh-07-17-5} we know that the contravariant metric $a$ defined on $V\otimes \mathbb{C}$ has the property
\[a(\dif\xi^i,\dif\xi^j)=\delta_{ij},\quad i, j=1,\dots,\ell.\]
Thus by using the identities
\begin{align}
    \frac{\partial P}{\partial \xi^k}(u)&=\frac{2\pi iP(u)(e^{2\pi i\xi^k}-e^{-2\pi i\xi^k})}{u+\zeta^k}, \quad 1\le k \le \ell, \label{partial P partial xi a}\\
    P'(u)&=P(u)\sum_{k=1}^\ell\frac{1}{u+\zeta^k}\label{P'(u)},
\end{align}
we obtain
    \begin{align*}
        &\sum_{j,k=0}^\ell\big((\varphi_\lambda)_*a\big)(\mathrm{d}y^j, \mathrm{d}y^k)u^{\ell-j}v^{\ell-k}=\frac{1}{4\pi^2}\sum_{a=1}^\ell\frac{\partial P(u)}{\partial \xi^a}\frac{\partial P(v)}{\partial \xi^a}\\
        =&-\sum_{a=1}^\ell P(u)P(v)\frac{(\zeta^a)^2-4}{(u+\zeta^a)(v+\zeta^a)}\\
        =&-\sum_{a=1}^\ell P(u)P(v)(1-\frac{u^2-4}{u-v}\frac{1}{u+\zeta^a}+\frac{v^2-4}{u-v}\frac{1}{v+\zeta^a})\\
        =&-\ell P(u)P(v)+\frac{u^2-4}{u-v}P'(u)P(v)-\frac{v^2-4}{u-v}P(u)P'(v),
    \end{align*}
so we proved the first formula \eqref{generating of g lambda ij}. In a similar way we can prove the second formula \eqref{generating of Gamma lambda k ij}. The lemma is proved. 
\end{proof}
\end{lem}
  The above lemma shows that $g_\lambda^{ij}(y)$ are quadratic polynomials in $y^1,\dots, y^\ell$ and $\lambda$, which may not depend linearly on $\lambda$, so in general $y^1, \dots, y^\ell$ are not pencil generators of $\mathscr A^W$. Since 
\[\deg y^j=\deg\lambda=1, \quad j=1, \dots, \ell,\] 
we may attempt to construct proper generators of the form
  \begin{align}\label{z j}
      z^j=y^j+c_j\lambda, \quad j=1, \dots, \ell, 
  \end{align}
such that $z^1, \dots, z^\ell$ form a set of pencil generators of $\mathscr A^{W}$. 
The following theorem shows that we can indeed find pencil generators in this way.
\begin{thm}\label{pencil generator of C-type}
    For any fixed number $0\le m\le \ell$, there exist pencil generators $z^1, \dots, z^\ell$ of the form \eqref{z j}, with constants $c_1, \dots, c_\ell$ defined by the generating function
    \begin{align}
        P_0(u)=\sum_{j=1}^\ell c_ju^{\ell-j}=u^\ell-(u+2)^m(u-2)^{\ell-m}. 
    \end{align}
    \end{thm}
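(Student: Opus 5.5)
The plan is to rewrite everything in terms of the generating polynomial of the new coordinates and to substitute into the generating formulas \eqref{generating of g lambda ij} and \eqref{generating of Gamma lambda k ij}. Put $B(u)=(u+2)^m(u-2)^{\ell-m}$ and $Z(u)=\sum_{j=1}^\ell z^ju^{\ell-j}$. Since $y^0=\lambda$, we have $P(u)=\lambda u^\ell+\sum_{j\ge1}y^ju^{\ell-j}$. With $z^j=y^j+c_j\lambda$ and $\sum c_ju^{\ell-j}=u^\ell-B(u)$ this gives
\[
P(u)=Z(u)+\lambda B(u),
\]
where $Z$ has degree $\ell-1$ in $u$ and its coefficients are the new coordinates.

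Preliminary checks are routine. The $z^j$ are $W_a(R)$-invariant, quasi-homogeneous of degree $1=\theta_j$, and satisfy $z^j|_{\lambda=0}=y^j|_{\lambda=0}$, so they are proper generators. For fixed $\lambda$ the change $y\mapsto z$ is a translation by constants. Hence the components $g^{ij}_\lambda$ and $\Gamma^{ij}_{g_\lambda,k}$ in the $z$-coordinates are the same functions, merely re-expressed, and $\mathrm{d}P(u)=\mathrm{d}Z(u)$, $\mathrm{d}P'(u)=\mathrm{d}Z'(u)$.

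The key algebraic identity is
\[
(u^2-4)B'(u)=B(u)\bigl(m(u-2)+(\ell-m)(u+2)\bigr).
\]
Substitute $P=Z+\lambda B$ into \eqref{generating of g lambda ij}. The coefficient of $\lambda^2$ equals
\[
B(u)B(v)\Bigl[-\ell+\frac{m(u-v)+(\ell-m)(u-v)}{u-v}\Bigr]=0.
\]
So $g_\lambda=g+\lambda\eta$, where $g$ is the generating expression with $P$ replaced by $Z$, and $\eta$ is its bilinear polarization in $(Z,B)$:
\[
\sum\eta^{ij}u^{\ell-i}v^{\ell-j}=-\ell\bigl(Z(u)B(v)+B(u)Z(v)\bigr)+\frac{u^2-4}{u-v}\bigl(Z'(u)B(v)+B'(u)Z(v)\bigr)-\frac{v^2-4}{u-v}\bigl(Z(u)B'(v)+B(u)Z'(v)\bigr).
\]
For the Christoffel symbols, every term on the right of \eqref{generating of Gamma lambda k ij} is of the form (undifferentiated $P$) times ($\mathrm{d}P$ or $\mathrm{d}P'$). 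Since the differentials involve only $Z$, each term is at most linear in $\lambda$. This gives $\Gamma_{g_\lambda}=\Gamma_g+\lambda\Gamma_\eta$, where $\Gamma_\eta$ is obtained by replacing the undifferentiated $P$ by $B$ and the differentials by $\mathrm{d}Z$. Consistency with \eqref{zzh-7} holds automatically, because the $\lambda$-linear part of the Levi-Civita connection of $g_\lambda$ is by construction that of $\eta$.

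The remaining point, which I expect to be the main obstacle, is that $\eta$ is nondegenerate at generic points. The entries $\eta^{ij}$ are linear in $z$, so it suffices to exhibit one point where $\det\eta\ne0$. A natural choice is $Z(u)=B'(u)$ (up to normalization), or a suitable one-parameter family where the $\eta$-generating function factorizes. An alternative is to note that $\eta$ is the metric $g_\lambda$ differentiated in $\lambda$, i.e.\ the Lie derivative of $g$ along a constant vector field. One can then evaluate it at a point of the torus where the $\zeta^k$ are chosen so that $P(u)/\lambda$ has simple roots away from $\pm2$, and compute $\eta(\mathrm{d}\zeta^a,\mathrm{d}\zeta^b)$ in root coordinates, where it should become diagonal with nonzero entries. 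I would try the root-coordinate computation first, since the $\xi$-coordinates diagonalize $a$.
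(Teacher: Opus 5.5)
Your reduction to $P=Z+\lambda B$ and your two linearity statements are correct and follow the paper's argument. Your identity $(u^2-4)B'(u)=\beta(u)B(u)$, with $\beta(u)=m(u-2)+(\ell-m)(u+2)$, verifies the vanishing of the $\lambda^2$-term directly, where the paper cites \cite{Advance}. Your observation that each term of \eqref{generating of Gamma lambda k ij} has exactly one factor not carrying a $\mathrm{d}$ handles the Christoffel symbols cleanly.

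The gap is the non-degeneracy of $\eta$, which you leave open. It is not a side issue. It is part of the definition of pencil generators, and your ``automatic'' identification of the $\lambda$-coefficient of $\Gamma_{g_\lambda}$ with $\Gamma_\eta$ also depends on it. Expanding $\partial_k g^{ij}_\lambda=\Gamma^{ij}_{g_\lambda,k}+\Gamma^{ji}_{g_\lambda,k}$ and $g^{is}_\lambda\Gamma^{jk}_{g_\lambda,s}=g^{js}_\lambda\Gamma^{ik}_{g_\lambda,s}$ in powers of $\lambda$ shows that this coefficient satisfies the defining relations of the Levi-Civita connection of $\eta$. Concluding that it equals $\Gamma_\eta$ uses uniqueness, which requires $\det\eta\neq0$.

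Neither route you sketch closes the gap.
\begin{enumerate}
\item The choice $Z=B'$ fails. $\det(\eta^{ij})$ vanishes on $\{Z(2)=0\}$ when $m<\ell$ and on $\{Z(-2)=0\}$ when $m>0$. This follows from \eqref{det of eta in C-case}, since $Z(2)=4^m\tau^{\ell-m}$ and $Z(-2)=-(-4)^{\ell-1}\tau^\ell$ in the coordinates \eqref{generating function of tau j}, or from the formula below. Since $B'(2)=0$ for $\ell-m\ge2$ and $B'(-2)=0$ for $m\ge2$, the point $Z=B'$ is degenerate for every $\ell\ge3$. Concretely, for $\ell=2$, $m=0$, $Z=2u-4$, the paper's $C_2$ example gives $\eta=\begin{pmatrix}8&-16\\-16&32\end{pmatrix}$, which is singular.
\item The root-coordinate idea rests on a false expectation. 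It is $g_\lambda$, not $\eta$, that is diagonal in the $\zeta$'s: $g_\lambda(\mathrm{d}\zeta^a,\mathrm{d}\zeta^b)=-\delta_{ab}\bigl((\zeta^a)^2-4\bigr)$. With $\rho_a=-\zeta^a$ the roots of $P$, one finds $\eta(\mathrm{d}\zeta^a,\mathrm{d}\zeta^b)=G(\rho_a,\rho_b)/\bigl(P'(\rho_a)P'(\rho_b)\bigr)$, where $G$ is the $\eta$-generating function. For $C_2$, $m=1$, the off-diagonal numerator is $2\bigl((z^2)^2-4(z^1)^2\bigr)/\lambda$. This vanishes exactly where $\det\eta=16\bigl((z^2)^2-4(z^1)^2\bigr)$ does, so $\eta$ is never diagonal where it is non-degenerate.
\end{enumerate}

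The paper closes the gap with the linear change of variables \eqref{generating function of tau j}. In these coordinates $\eta$ becomes block diagonal with two anti-triangular blocks, which gives the explicit determinant \eqref{det of eta in C-case}.

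Your own identity gives a shorter route. Substitute $(u^2-4)B'=\beta B$ into \eqref{zh-07-19-1} and use $\beta(u)-\beta(v)=\ell(u-v)$. The generating function collapses to
\[
\sum_{i,j=1}^\ell\eta^{ij}u^{\ell-i}v^{\ell-j}=\frac{H(u)B(v)-B(u)H(v)}{u-v},\qquad H(u)=(u^2-4)Z'(u)-\beta(u)Z(u).
\]
So $(\eta^{ij})$ is the Bezout matrix of $H$ and the monic degree-$\ell$ polynomial $B$. Hence $\det(\eta^{ij})$ equals, up to sign, the resultant
\[
\prod_{B(\rho)=0}H(\rho)=\bigl(4m\,Z(-2)\bigr)^m\bigl(-4(\ell-m)\,Z(2)\bigr)^{\ell-m},
\]
where a factor with exponent $0$ is read as $1$. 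This is a nonzero polynomial in $z^1,\dots,z^\ell$. It proves generic non-degeneracy and agrees with \eqref{det of eta in C-case}.
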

    \begin{proof}(cf. \cite{Advance})
It suffices to find a polynomial $P_0(u)=\sum_{j=1}^\ell c_ju^{\ell-j}$ such that, after the shift
    \[P(u)\to P(u)-\lambda P_0(u),\quad  P(v)\to P(v)-\lambda P_0(v), \]
    the right hand side of \eqref{generating of g lambda ij} and \eqref{generating of Gamma lambda k ij} depend at most linearly on $\lambda$. This condition is equivalent to the following equation for $P_0$:
    \begin{align}\label{equation of P0}
        -\ell P_1(u)P_1(v)+\frac{u^2-4}{u-v}P_1'(u)P_1(v)-\frac{v^2-4}{u-v}P_1(u)P_1'(v)=0,
    \end{align}
where $P_1(u)=u^\ell-P_0(u)$.
From the proof of Theorem 3.4 of \cite{Advance} we know that we can take
\begin{equation}\label{01-18-c}
P_1(u)=(u+2)^m(u-2)^{\ell-m}
\end{equation}
for any fixed $0\le m\le \ell$.

In order to prove that 
\begin{equation}\label{zh-07-20-2}
g=(g^{ij}(z))=\left(g_\lambda(\dd z^i,\dd z^j)\right)|_{\lambda=0},\quad \eta=(\eta^{ij}(z))=\left(\frac{\partial}{\partial\lambda}g_\lambda(\dd z^i, \dd z^j)\right)\end{equation}
form a flat pencil of metrics, we need to show that the determinant of $(\eta^{ij}(z))$ does not vanishes at generic point of the orbit space $\mathcal{M}(C_\ell,\omega_1)$ of the affine Weyl group. Define the following generating function of the new coordinates $z^1,\dots, z^\ell$ that are introduced in \eqref{z j}:
\begin{equation}\label{01-18-d}Q(u)=\sum_{j=1}^\ell z^j u^{\ell-j},\end{equation}
then in the coordinates $z^1,\dots,z^\ell$ the metric $\eta$ can be represented by
\begin{align}
&\sum_{j,k=1}^\ell\eta\left(\dd z^j,\dd z^k\right)u^{\ell-j}v^{\ell-k}
=-\ell \left(Q(u) P_1(v)+Q(v) P_1(u)\right)\notag\\
&\quad +\frac{u^2-4}{u-v}\left(Q'(u) P_1(v)+P_1'(u) Q(v)\right)-\frac{v^2-4}{u-v}\left(Q'(v) P_1(u)+P_1'(v) Q(u)\right).\label{zh-07-19-1}
\end{align}
To prove the non-degeneracy of the metric $\eta$, 
let us adopt the method of calculation for the metric $\eta$ given in the proof of Theorem 3.4 of \cite{Advance} to the present case. For any fixed $0\le m\le \ell$, consider the linear change of coordinates
\[(z^1, \dots, z^\ell)\mapsto(\tau^1, \dots, \tau^\ell)\]
defined by
\begin{align}\label{generating function of tau j}
    \sum_{j=1}^\ell z^ju^{\ell-j}=\sum_{j=1}^{\ell-m}\tau^j(u+2)^m(u-2)^{\ell-m-j}-\sum_{j=\ell-m+1}^\ell\tau^j(u+2)^{\ell-j}(u-2)^{j-1}. 
\end{align}
By inserting the expressions for $Q(u), Q(v)$ into both sides of \eqref{zh-07-19-1}, we obtain an identity in the variables $u, v$. Dividing this identity by $P_1(u)$ and $P_1(v)$ yields a new identity relating two rational functions in $u$ (treating $v$ as a parameter), which possess poles at $u= \pm 2$. Comparing the regular and singular parts of this expression at $u=2$ and $u=-2$ leads to explicit formulae for the matrix elements of $\left(\eta(d\tau^i, d\tau^j)\right)$. This matrix exhibits a block diagonal form
\begin{align}\label{eta(tau)}
    \begin{pmatrix}
        W_1 &0\\
        0&W_2
    \end{pmatrix},
\end{align}
with anti-triangular matrices
\begin{equation}\label{expansions of W1, W2 in tau}
    W_1=\begin{pmatrix}
        R_1&R_2&\cdots&R_{\ell-m}\\
        R_2&R_3&\cdots&0\\
        \vdots&\vdots& &\vdots\\
        R_{\ell-m-1} &R_{\ell-m}& \cdots&0\\
        R_{\ell-m}&0&\cdots&0
    \end{pmatrix}, \quad W_2=\begin{pmatrix}
        S_1&S_2&\cdots&S_m\\
        S_2&S_3&\cdots&0\\
        \vdots&\vdots& &\vdots\\
        S_{m-1}&S_m&\cdots&0\\
        S_m&0&\cdots&0
    \end{pmatrix},
\end{equation}
they have entries 
\begin{align}
    R_s=4s\tau^{s}+(1-\delta_{s, \ell-m})(s+1)\tau^{s+1}, \quad
    S_r=4r\tau^{\ell-m+r}-4(1-\delta_{r, m})r\tau^{\ell-m+r+1}\label{Qs Sr in tau}
    \end{align}
for $1\le s\le \ell-m,\, 1\le r\le m$. Thus
by a simple computation we get
\begin{equation}\label{det of eta in C-case}
\det(\eta^{ij}(\tau))=\begin{cases}
    (-1)^{\frac{\ell^2-(2m+1)\ell+2m^2}{2}}4^\ell m^m(\ell-m)^{\ell-m}(\tau^{\ell-m})^{\ell-m}(\tau^\ell)^m, &m\neq 0, \ell;\\
    (-1)^{\frac{\ell(\ell-1)}{2}}4^\ell\ell^\ell(\tau^\ell)^\ell, &m=0, \ell. 
\end{cases}
\end{equation}
So the matrix $(\eta^{ij}(\tau))$ is non-degenerate on 
\begin{equation}\label{01-18-a}
\mathcal{M}_{\ell,m}:=\mathcal{M}(C_\ell,\omega_1)\setminus(\{\tau^\ell=0\}\cup\{\tau^{\ell-m}=0\})
\end{equation}
when $m\not=0, \ell$, and on 
\[\mathcal{M}_{\ell,0}=\mathcal{M}_{\ell,\ell}:=\mathcal{M}(C_\ell,\omega_1)\setminus\{\tau_\ell=0\}\] 
when $m=0, \ell$. 
Thus $\left(g^{ij}(z)\right)$ and $\left(\eta^{ij}(z)\right)$ form a flat pencil of metrics on $\mathcal{M}_m$, and $z^1, \dots, z^\ell$ are pencil generators of $\mathscr A^W$. The theorem is proved. 
\end{proof}

\begin{cor}
    In the coordinates $\tau^1, \dots, \tau^\ell$, the components of the metric $g=\left(g^{ij}(\tau)\right)$ and the contravariant components $\Gamma_k^{ij}(\tau)$ of its Levi-Civita connection are quasi-homogeneous polynomials with degrees
    \begin{align}
        \deg g^{ij}(\tau)=i+j, \quad \deg\Gamma_k^{ij}(\tau)=i+j-k,
    \end{align}
and we have $\deg \tau^j=j$. 
\end{cor}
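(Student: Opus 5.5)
The plan is to derive polynomiality directly from the generating-function formulae \eqref{generating of g lambda ij} and \eqref{generating of Gamma lambda k ij}. The degree statement is then read off by redoing, for $g$ and $\Gamma$, the local expansion at $u=\pm2$ that produced \eqref{eta(tau)}. Note that $\deg\tau^j=j$ is a \emph{new} grading: under the original grading every $z^j$ and every $\tau^j$ has degree $1$.

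First, polynomiality. The relation \eqref{z j} gives $P(u)=Q(u)+\lambda P_1(u)$, where $Q$ is the generating function \eqref{01-18-d}. Substitute this into the right-hand sides of \eqref{generating of g lambda ij} and \eqref{generating of Gamma lambda k ij}. Equation \eqref{equation of P0} kills the $\lambda^2$ term, and the linear term is \eqref{zh-07-19-1}. So $g$ and $\Gamma$ are obtained by replacing $P$ with $Q$:
\[
\sum g^{ij}u^{\ell-i}v^{\ell-j}=-\ell Q(u)Q(v)+\frac{u^2-4}{u-v}Q'(u)Q(v)-\frac{v^2-4}{u-v}Q(u)Q'(v),
\]
with the analogous formula for $\Gamma$. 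These are polynomials in $u,v$, since the numerators vanish at $u=v$. Hence $g^{ij}$ and $\Gamma^{ij}_k$ are quadratic polynomials in $z$. Because \eqref{generating of function of tau j} is an invertible linear change of coordinates, they are also polynomials in $\tau$.

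Second, the grading. I would follow the method used for $\eta$. Write $Q(u)=P_1(u)\bigl(A(u)+B(u)\bigr)$ with
\[
A(u)=\sum_{j\le\ell-m}\tau^j(u-2)^{-j},\qquad B(u)=-\sum_{j>\ell-m}\tau^j(u+2)^{-(\ell-m+1)}\Big(\tfrac{u-2}{u+2}\Big)^{j-\ell+m-1}\cdot(\text{const}),
\]
using the explicit shape of \eqref{generating function of tau j}. Thus $\tau^1,\dots,\tau^{\ell-m}$ are the principal-part coefficients at $u=2$, and $\tau^{\ell-m+1},\dots,\tau^\ell$ are those at $u=-2$. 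Divide the identity for $g$ by $P_1(u)P_1(v)$. Since $P_1$ satisfies \eqref{equation of P0}, the terms involving $P_1'/P_1$ combine with $-\ell$ and cancel the $P_1P_1$ part. What remains is an expression bilinear in $A+B$ and in its derivative, with coefficients $(u^2-4)/(u-v)$.

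Then compare principal parts at $u=2$ and $u=-2$, order by order. The claim $\deg g^{ij}=i+j$ amounts to the following: the coefficient of $(u-2)^{-i}$ paired with the $v$-coefficient indexed by $j$ picks up only monomials $\tau^a\tau^b$ with $a+b=i+j$ (weights $\deg\tau^a=a$). This should follow because each factor $(u-2)^{-a}$ carries weight $a$, and the kernel $(u^2-4)/(u-v)$ is the only source of mixing. I would set $\deg(u\mp2)=-1$ locally and check that the kernel is homogeneous of weight $0$ to the relevant order. The same expansion applies verbatim to the $\Gamma$ generating function, where the extra term $\frac{uv-4}{(u-v)^2}$ must also be checked, and it gives $\deg\Gamma^{ij}_k=i+j-k$.

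The main obstacle is the weight bookkeeping in this last step. The block $W_1$ already displays entries $R_s=4s\tau^s+(s+1)\tau^{s+1}$, which are not homogeneous in the naive sense. So I expect the correct grading either to require a triangular rescaling of the $\tau$'s, or to be understood as a filtration (top-weight) statement. I would first test the small cases $\ell=1,2$ by direct computation to pin down exactly which normalisation makes the degrees come out as $i+j$ and $i+j-k$, and then run the general residue comparison in that normalisation.
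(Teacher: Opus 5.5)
Your first step is correct, and it is in fact all of the corollary that can be proved. The paper states the corollary without proof. Write $P=Q+\lambda P_1$. The $\lambda^2$ part of \eqref{generating of g lambda ij} then vanishes by \eqref{equation of P0}, and the connection formula is affine in $\lambda$ anyway, because $\mathrm{d}P=\mathrm{d}Q$. So $g$ and $\Gamma$ are given by the same generating functions with $P$ replaced by $Q$. In the linear coordinates $\tau$, the $g^{ij}$ are therefore homogeneous quadratic polynomials and the $\Gamma^{ij}_k$ homogeneous linear ones. This is quasi-homogeneity for the grading of $E=\sum_r z^r\partial_{z^r}=\sum_j\tau^j\partial_{\tau^j}$. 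In that grading every $\tau^j$ has degree $\theta_j=1$, $\deg g^{ij}=2$ and $\deg\Gamma^{ij}_k=1$. It is also the grading in which the degrees of Corollary \ref{polynomial property of g and Gamma} are measured.

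Your second step cannot be completed, and the reason is not bookkeeping: with $\deg\tau^j=j$, the assertion $\deg g^{ij}(\tau)=i+j$ is false. Take $m=0$. Then \eqref{generating function of tau j} reads $Q(u)=\sum_{j=1}^\ell\tau^j(u-2)^{\ell-j}$, and $g(\mathrm{d}\tau^a,\mathrm{d}\tau^b)$ is the coefficient of $(u-2)^{\ell-a}(v-2)^{\ell-b}$ in the generating function of $g$. Setting $v=2$ and then $u=2$ gives
\[ g(\mathrm{d}\tau^\ell,\mathrm{d}\tau^\ell)=Q(2)\bigl(4Q'(2)-\ell Q(2)\bigr)=4\tau^{\ell-1}\tau^\ell-\ell(\tau^\ell)^2, \]
whose two terms have weights $2\ell-1$ and $2\ell$.

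For $\ell=2$ the full matrix is $g^{11}=-(\tau^1)^2$, $g^{12}=-\tau^1\tau^2$, $g^{22}=4\tau^1\tau^2-2(\tau^2)^2$. No choice of weights for $\tau^1,\tau^2$ makes these homogeneous of degrees $2,3,4$. This agrees with the paper's own $C_2$, $m=0$ example, where $g(\mathrm{d}\tau^2,\mathrm{d}\tau^2)=16(t^2)^6g^{22}(t)$.

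The failure sits exactly where your heuristic breaks. At $v=2$ the kernel $\frac{u^2-4}{u-v}$ becomes $u+2=4+(u-2)$, which is not of pure weight when $\deg(u-2)=-1$. The constant $4$ creates the lower-weight term, just as it creates $(s+1)\tau^{s+1}$ in $R_s$. A triangular renormalisation does not rescue the statement either: in the coordinates of Lemma \ref{tau->w}, in the same case, one finds $g(\mathrm{d}w^1,\mathrm{d}w^1)=-(w^1)^2+\frac19w^1w^2-\frac1{108}(w^2)^2$.

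So your planned small-case test would not pin down a normalisation. It would show that the weighted part of the statement must be replaced by the $E$-grading version, which your first step already establishes.
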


\subsection{Flat coordinates of the metric $\eta$}
In this subsection, we are to show that the flat coordinates of the metric $\eta=(\eta^{ij}(z))$ defined in the last subsection are algebraic functions of $\tau^1, \dots, \tau^\ell$. Since the explicit form of the matrix $\left(\eta(d\tau^i, d\tau^j)\right)$ given by \eqref{eta(tau)}--\eqref{Qs Sr in tau} coincides with the 
\[\begin{pmatrix} W_2 &0 \\ 0 & W_3\end{pmatrix}\] 
block of the matrix $\left(\eta^{ij}(\tau)\right)$ given in (3.26) of \cite{Advance},
we can adopt directly the results of Lemma 3.8, Lemma 3.9 and Theorem 3.11 of \cite{Advance}, by setting the parameter $k$ that appears there to be zero, to characterize properties of flat coordinates of $\eta$.

We first perform changes of coordinates to simplify the matrix $(\eta^{ij}(\tau))$. 
\begin{lem}[cf. Lemma 3.8 of \cite{Advance}]\label{tau->w}
    There exists a system of coordinates $w^1, 
    \dots, w^\ell$ of the form
   \begin{align*}
        &w^j=\tau^j+\sum_{s=j+1}^{\ell-m}c_s^j\tau^s, \quad 1\le j\le \ell-m-1, \\
        &w^j=\tau^j+\sum_{s=j+1}^\ell h_s^j\tau^s, \quad \ell-m+1\le j\le \ell-1, \\
        &w^{\ell-m}=\tau^{\ell-m}, \quad w^\ell=\tau^\ell. &
    \end{align*}
    with certain constants $c_s^j, h_s^j$, such that the the matrix $\left(\eta(\mathrm{d}w^i, \mathrm{d}w^j)\right)$ still possesses block diagonal form (\ref{eta(tau)})--(\ref{expansions of W1, W2 in tau}) with the entries replaced by
    \begin{align*}
        R_s=4sw^s, \quad
        S_r=4rw^{\ell-m+r},\quad 1\le s\le \ell-m,\ 1\le r\le m.
        \end{align*}
\end{lem}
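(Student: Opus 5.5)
The plan is to build the coordinates $w^j$ by a triangular, degree-preserving linear change of the $\tau$'s, one Hankel block at a time, using the fact that the two blocks $W_1$ and $W_2$ do not interact. Since the change of coordinates is linear in each block separately, $\eta(\dd w^i,\dd w^j)$ stays block diagonal. It therefore suffices to treat a single anti-triangular Hankel block with entries $R_s=4s\tau^s+(s+1)\tau^{s+1}$ for $s<n$ and $R_n=4n\tau^n$ (here $n=\ell-m$). The block $W_2$ is handled the same way, with $S_r=4r\tau^{\ell-m+r}-4r\tau^{\ell-m+r+1}$.

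The key observation is that the Hankel structure is the $\eta$ of a simple one-variable generating function. A block with entries $H_{ij}=R_{i+j-1}$ ($R_k=0$ for $k>n$) has the form $H_{ij}=\sum_s A_{ij}^s\tau^s$, where $A^s$ are constant matrices. We look for $w^j=\tau^j+\sum_{s>j}c^j_s\tau^s$, i.e. $\dd w = C\,\dd\tau$ with $C$ upper unitriangular, so that $C H C^T$ is Hankel with entries $4s\,w^s$. I would encode this with generating functions. Write $\sum_{i,j}H_{ij}u^{i-1}v^{j-1}$ and note that a Hankel matrix with entries $\rho_{i+j-1}$ corresponds to $\sum_k\rho_k\frac{u^k-v^k}{u-v}$, truncated at degree $n$, i.e. computed modulo $(u^n,v^n)$. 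The given $R_s$ is then $4s\tau^s+(s+1)\tau^{s+1}$. Set $\phi(u)=\sum_s\tau^s u^s$. The term $4s\tau^s$ comes from $4u\phi'(u)$, and the term $(s+1)\tau^{s+1}$ comes from $\phi'(u)$ shifted by one. Together these amount to a generating function of the form $(4u+u^2)\phi'(u)/u$, i.e. $(4+u)\phi'(u)$ up to index bookkeeping.

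The substitution suggested by this is a reparametrization $u\mapsto \tilde u=u\,g(u)$ with $g(0)=1$, chosen to turn the factor $(4+u)\frac{d}{du}$ into $4\tilde u\frac{d}{d\tilde u}$. Concretely, $\tilde u=\frac{4u}{4+u}$ up to normalization, i.e. the solution of $\frac{4\tilde u}{\tilde u'}=(4+u)u$. Re-expanding $\phi$ as $\sum_s w^s\tilde u^s$ gives a unitriangular change $w^j=\tau^j+\sum_{s>j}c^j_s\tau^s$, and in particular $w^n=\tau^n$. The transformation law for $C H C^T$ corresponds to changing variables simultaneously in $u$ and $v$, with the Jacobian factors coming from $\dd u$. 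The new Hankel entries then become $4s\,w^s$.

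The main obstacle is to verify that this substitution acts on the Hankel generating function $\frac{u\,\Phi(u)-v\,\Phi(v)}{u-v}$ compatibly with the truncation. In particular we must check that no spurious terms survive modulo $(u^n,v^n)$, so that the result is exactly $C H C^T$ and not merely a congruent form. If the generating-function argument becomes unwieldy, the fallback is a direct induction from the bottom anti-diagonal upward. At step $j$ (decreasing from $n-1$) we set $w^j=\tau^j+\sum_{s>j}c^j_s\tau^s$ and solve the linear equations that force the $(1,j)$-entry to be $4j\,w^j$. The Hankel property of the transformed matrix must then be checked; it follows from the identity $\eta^{ij}=\partial_\lambda g_\lambda^{ij}$ with quasihomogeneity $\deg w^j=j$, which forces each entry to be linear in $w$'s of the correct degree, plus the symmetry relations of the Christoffel symbols. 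Finally, $\det$ and the vanishing of off-diagonal blocks are preserved automatically, and the analogous computation for $W_2$ uses the factor $4r(1-u)$ in place of $(4+u)$, which gives constants $h^j_s$.
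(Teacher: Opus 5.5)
The paper gives no proof of its own for this lemma; it imports Lemma 3.8 of \cite{Advance} with $k=0$. Judged on its own terms, your proposal has a genuine gap. The step you flag as the main obstacle is the whole content of the lemma, and the concrete mechanism you offer for it is wrong.

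First, the triangularity goes the wrong way. Re-expanding $\phi(u)=\sum_s\tau^su^s$ in $\tilde u=u\,g(u)$ with $g(0)=1$ sends $u^s$ to $\tilde u^s(1+O(\tilde u))$. So $w^k$ involves only $\tau^s$ with $s\le k$. That change is lower triangular: it gives $w^1=\tau^1$ and $w^n\neq\tau^n$, contradicting the required shape and your own claim $w^n=\tau^n$.

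Second, your equation $u(4+u)\tilde u'=4\tilde u$ ignores that the congruence $CHC^{T}$ carries a Jacobian weight on both indices. Already for $n=2$, with $C=\bigl(\begin{smallmatrix}1&c\\0&1\end{smallmatrix}\bigr)$ one finds $\tilde\rho_1=\rho_1+2c\rho_2$. Requiring $\tilde\rho_1=4w^1$ forces $2+16c=4c$, i.e. $c=-\frac16$, as in the $C_2$ example. By contrast, $\tilde u=4u/(4+u)$, even read in the correct direction with weight $\tilde u'=1-\frac u2+\cdots$, gives $-\frac12$.

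The fallback does not close the gap either. For $n\ge4$, Hankel form is a real constraint on a unitriangular $C$: such $C$ have $n(n-1)/2$ parameters, but the Hankel-preserving ones form only a $(2n-3)$-parameter family. So matching the first row gives no reason for the other entries to come out Hankel. Moreover, the grading $\deg w^j=j$ is not a symmetry of $\eta$ in the $\tau$-coordinates: each $R_s$ mixes $\tau^s$ and $\tau^{s+1}$, and for the actual Euler field every $\tau^j$ has degree $1$. Assuming that grading for the $w$'s presupposes the conclusion.

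Your generating-function idea can be repaired as follows.
\begin{itemize}
\item Let $T$ be the functional on $\mathbb C[x]$ with $T(x^{a-1})=\tau^a$ for $a\le n$ and $T(x^n\mathbb C[x])=0$. Put $Df=\frac{d}{dx}\bigl(x(4+x)f\bigr)$, so that $D(x^{k-1})=4kx^{k-1}+(k+1)x^k$ and $R_{a+b-1}=T\bigl(D(x^{a+b-2})\bigr)$.
\item Set $c_i(x)=\sum_aC_{ia}x^{a-1}$. Then $w^i=T(c_i)$ and $\eta(\dd w^i,\dd w^j)=T\bigl(D(c_ic_j)\bigr)$.
\item Take $c_i\equiv\psi^{i-1}\psi'$ modulo $x^n$, with $\psi=x+O(x^2)$. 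This makes $C$ upper unitriangular.
\item If $x(4+x)\psi'^2=4\psi$, then $D(\psi^k\psi'^2)=(4\psi^{k+1})'=4(k+1)\psi^k\psi'$. Hence $\eta(\dd w^i,\dd w^j)=4(i+j-1)w^{i+j-1}$, and this vanishes when $i+j-1>n$.
\item The solution is $\psi=4\,\mathrm{arcsinh}^2(\sqrt x/2)=x-\frac{x^2}{12}+\frac{x^3}{90}-\cdots$, which is not rational. It reproduces $w^1=\tau^1-\frac16\tau^2+\frac1{30}\tau^3$ and $w^2=\tau^2-\frac14\tau^3$ from the $C_3$ example.
\item Equivalently, $w^i$ is the coefficient of $y^{-i}$ in $\sum_a\tau^a x^{-a}$ rewritten in the new local parameter $y=\psi(x)$. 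The use of negative powers is why the change is upper rather than lower triangular.
\item For $W_2$ the same argument works with $Df=4(1-x)(xf)'$, weight $(1-x)\psi'$, and the equation $x(1-x)^2\psi'^2=\psi$.
\end{itemize}
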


 The following lemma simplifies the expression of the metric $\eta$ further.
\begin{lem}[cf. Lemma 3.9 of \cite{Advance}]\label{w->v}
    In the new coordinates $v^1,\dots, v^\ell$ defined by
    \begin{align*}
        &v^1=w^1(w^{\ell-m})^{-\frac{1}{2(\ell-m)}}, \quad v^\ell=(w^\ell)^{\frac{1}{2m}},\\
        &v^s=w^s(w^{\ell-m})^{-\frac{s}{\ell-m}}, \quad 2\le s\le \ell-m-1, \\
        &v^{\ell-m}=(w^{\ell-m})^{\frac{1}{2(\ell-m)}}, \quad v^{\ell-m+1}=w^{\ell-m+1}(w^\ell)^{-\frac{1}{2m}}, \\
        &v^{r}=w^r(w^\ell)^{-\frac{r+m-\ell}{m}},\quad  \ell-m+2\le r\le \ell-1. 
    \end{align*}
the metric $\eta$ has the expression
    \begin{equation}\label{eta (v)}
\begin{pmatrix}
    B_1&0\\
    0&B_2
\end{pmatrix},
    \end{equation}
where $B_1, B_2$ are anti-triangular matrices of the form
\begin{align*}
    B_1&=\begin{pmatrix}
        0&0&0&0&\cdots&0&2\\
        0&H_3&H_4&\cdots&H_{\ell-m-1}&H_{\ell-m}& \\
        0&H_{4}&H_5&\cdots&H_{\ell-m}& & \\
        \vdots&\vdots&\vdots& \begin{rotate}{45}
			$\cdots$
		\end{rotate} & & & \\
        0&H_{\ell-m-1}&H_{\ell-m}& & & & \\
        0&H_{\ell-m}& & & & & \\
        2& & & & & &  
    \end{pmatrix},\\[8pt]
    B_2&=\begin{pmatrix}
        0&0&0&0&\cdots&0&2\\
        0&H_{\ell-m+3}&H_{\ell-m+4}&\cdots&H_{\ell-1}&H_{\ell}& \\
        0&H_{\ell-m+4}&H_{\ell-m+5}&\cdots&H_{\ell}& & \\
        \vdots&\vdots&\vdots& \begin{rotate}{45}
			$\cdots$
		\end{rotate}& & & \\
         0&H_{\ell-1}&H_{\ell}& & & & \\
        0&H_\ell& & & & & \\
        2& & & & & &  
    \end{pmatrix},
\end{align*}
    with
    \begin{align*}
        &H_{s}=4s(v^{\ell-m})^{-2}v^s,\quad H_{\ell-m}=4(\ell-m)(v^{\ell-m})^{-2}, \notag\\
        &H_{\ell-m+j}=4j(v^\ell)^{-2}v^{\ell-m+j}, \quad H_\ell=4m(v^\ell)^{-2}, \\
        &3\le s\le \ell-m-1, \quad 3\le j\le m-1. 
    \end{align*}
\end{lem}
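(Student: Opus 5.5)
The plan is to work one block at a time, using the change of variables $w\mapsto v$ explicitly and transforming the contravariant metric by the chain rule
\[
\eta(\dd v^a,\dd v^b)=\sum_{i,j}\frac{\p v^a}{\p w^i}\frac{\p v^b}{\p w^j}\,\eta(\dd w^i,\dd w^j).
\]
By Lemma \ref{tau->w}, the matrix $\eta(\dd w^i,\dd w^j)$ is block diagonal. Each $v^s$ in the first group depends only on $w^1,\dots,w^{\ell-m}$, and each $v^r$ in the second group depends only on $w^{\ell-m+1},\dots,w^\ell$. The block diagonal structure therefore survives, and the two blocks can be treated separately. They have identical shape: replace $\ell-m$ by $m$ and shift indices by $\ell-m$. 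So it suffices to treat $B_1$. Write $n=\ell-m$ and $W=w^n$. On this block $\eta(\dd w^i,\dd w^j)=R_{i+j-1}$ if $i+j-1\le n$ and $0$ otherwise, with $R_s=4sw^s$.

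The key step is to compute the entries of $B_1$. The useful structural fact is that only $\dd W$ mixes into every new differential. Indeed, $\dd v^s=W^{-s/n}\dd w^s-\frac{s}{n}v^s W^{-1}\dd W$ for $2\le s\le n-1$, with the exponent $\frac1{2n}$ replacing $s/n$ for $v^1$, and $\dd v^n=\frac1{2n}W^{\frac1{2n}-1}\dd W$. The row of $W$ is very sparse: $\eta(\dd w^i,\dd W)=R_{i+n-1}$, which vanishes unless $i=1$, where it equals $4nW$.

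The entries then come out as follows.
\begin{itemize}
\item $\eta(\dd v^n,\dd v^n)=0$, since $\eta(\dd W,\dd W)=0$ when $n\ge2$.
\item $\eta(\dd v^n,\dd v^s)=0$ for $s\ge2$, because $\eta(\dd w^s,\dd W)=0$ there.
\item $\eta(\dd v^1,\dd v^n)=\frac1{2n}W^{\frac1{2n}-1}W^{-\frac1{2n}}\cdot4nW=2$. This is the corner entry $2$.
\item For $2\le s,t\le n-1$, the $\dd W$ corrections contribute only through $\eta(\dd w^s,\dd W)=0$ and $\eta(\dd W,\dd W)=0$. Hence $\eta(\dd v^s,\dd v^t)=W^{-(s+t)/n}\cdot4(s+t-1)w^{s+t-1}$ when $s+t-1\le n$. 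Since $v^{s+t-1}W^{(s+t-1)/n}=w^{s+t-1}$, this equals $4(s+t-1)W^{-1/n}v^{s+t-1}=4(s+t-1)(v^n)^{-2}v^{s+t-1}$, which is $H_{s+t-1}$.
\item In the boundary case $s+t-1=n$ the entry is $4nW^{-1/n}=H_n$.
\end{itemize}

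The main obstacle is the first row, $\eta(\dd v^1,\dd v^t)$ for $t\le n-1$, which the statement claims vanishes. Take $t\ge2$. The expansion gives
\[
W^{-\frac1{2n}-\frac tn}R_t-\tfrac tn v^tW^{-1}W^{-\frac1{2n}}\cdot4nW=W^{-\frac1{2n}-\frac tn}\bigl(4tw^t-4tw^t\bigr)=0 .
\]
The case $\eta(\dd v^1,\dd v^1)$ needs care, since it involves $R_1$, $\eta(\dd w^1,\dd W)$ and $\eta(\dd W,\dd W)$. Its exponent $\frac1{2n}$ (instead of $\frac1n$) is exactly what should make the $\dd W$ cross term, with coefficient $2\cdot\frac1{2n}\cdot4n=4$, cancel $R_1=4w^1$ to leave $0$. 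I would verify this cancellation explicitly.

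The degenerate small cases $n=1,2$ (and $m=0,\ell$, where one block is absent) would be checked directly in the same way. Applying the identical computation to the second block yields $B_2$, which completes the proof. Alternatively, the whole computation is the specialization $k=0$ of Lemma 3.9 of \cite{Advance}, and the argument can cite it, as the paper indicates.
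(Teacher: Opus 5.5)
Your proposal is correct, and it differs from the paper mainly in being self-contained. The paper gives no argument of its own for this lemma: it observes that the $w$-blocks coincide with a sub-block of (3.26) in \cite{Advance} and imports Lemma 3.9 there with $k=0$. You instead verify the statement directly from Lemma \ref{tau->w} by the chain rule, and this makes the mechanism visible.

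The two blocks decouple because each group of $v$'s depends only on its own group of $w$'s. Within the first block, with your $n=\ell-m$ and $W=w^{n}$, the only mixing differential is $\dd W$. Its $\eta$-row has the single nonzero entry $\eta(\dd w^1,\dd W)=4nW$. The exponents in the definition of $v^1,\dots,v^{n}$ are tuned so that this entry does three things:
\begin{enumerate}
\item it cancels $R_1,\dots,R_{n-1}$ along the first row;
\item it turns the corner entry into the constant $2$;
\item it leaves the remaining Hankel entries with the uniform factor $W^{-1/n}=(v^{n})^{-2}$.
\end{enumerate}

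The $(1,1)$ cancellation you left to check does go through:
\[
W^{-\frac1n}\cdot 4w^1-2\cdot\tfrac{1}{2n}\,w^1\,W^{-\frac1n-1}\cdot 4nW=0 .
\]
For $n=1$, where $\eta(\dd W,\dd W)=R_1\neq 0$, one gets $\tfrac14 W^{-1}\cdot 4W=1$, in agreement with the remark following the lemma.

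The citation buys brevity, but it asks the reader to match the present block with the one in \cite{Advance} and to specialize $k=0$ there. Your computation removes that dependence.
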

\begin{rem}\label{m=0, m=1, m=2, B 2 is constant}
    When $m=0$ (resp. $m=\ell$), the matrix $B_2$ (resp. $B_1$)does not appear in \eqref{eta (v)}. When $m=1$ (resp. $m=\ell-1$), we have $B_2=1$ (resp. $B_1=1$). When $m=2$ (resp.  $m=\ell-2$), the matrix $B_2$ (resp.  $B_1$) has the form $\begin{pmatrix}
        0&2\\2&0
    \end{pmatrix}$. 
\end{rem}
\begin{thm}[cf. Theorem 3.11 of \cite{Advance}]\label{flat coordinates of C}
    One can choose flat coordinates of the metric $\eta$ of the form
    \begin{align*}
        &t^1=v^1+v^{\ell-m}h_1(v^2, \dots, v^{\ell-m-1}), \\
        &t^\alpha=v^{\ell-m}(v^\alpha+h_\alpha(v^{\alpha+1}, \dots, v^{\ell-m-1})), \quad 2\le \alpha\le \ell-m-1, \\
        & t^{\ell-m}=v^{\ell-m}, \\
        &t^{\ell-m+1}=v^{\ell-m+1}+v^\ell h_{\ell-m+1}(v^{\ell-m+2}, \dots, v^{\ell-1}), \\
        &t^\beta=v^\ell(v^\beta+h_\beta(v^{\beta+1}, \dots, v^{\ell-1})), \quad \ell-m+2\le \beta\le \ell-1, \\
        &t^\ell=v^\ell. 
    \end{align*}
Here $h_{\ell-m-1}=h_{\ell-1}=0$, $h_\alpha$ are quasi-homogeneous polynomials of degree $\frac{\ell-m-\alpha}{\ell-m}$ for $1\le \alpha\le \ell-m-2$, and $h_\beta$ are quasi-homogeneous polynomials of degree $\frac{\ell-\beta}{m}$ for $\ell-m+1\le \beta\le \ell-2$. 
\end{thm}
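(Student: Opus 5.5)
The metric $\eta$ in the coordinates $v^1,\dots,v^\ell$ has the block form \eqref{eta (v)}, with two blocks that are completely decoupled. So I would treat the blocks separately: find flat coordinates $t^1,\dots,t^{\ell-m}$ for $B_1$ as functions of $v^1,\dots,v^{\ell-m}$ only, and $t^{\ell-m+1},\dots,t^\ell$ for $B_2$ as functions of $v^{\ell-m+1},\dots,v^\ell$ only. The two blocks have the same shape after relabelling ($s\leftrightarrow \ell-m+j$, $\ell-m\leftrightarrow m$), so it suffices to handle $B_1$. The degenerate cases $m=0,1,2$ (and symmetrically $\ell-m\le 2$) follow from Remark \ref{m=0, m=1, m=2, B 2 is constant}, where the block is already constant.

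For $B_1$ I would proceed in three steps.

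\emph{Step 1: the distinguished coordinates $v^1$ and $v^{\ell-m}$.} The only nonzero entries involving $v^1$ and $v^{\ell-m}$ are $\eta(\dd v^1,\dd v^{\ell-m})=2$. Because of the anti-triangular structure, I would first check directly that $t^{\ell-m}=v^{\ell-m}$ is flat. This amounts to verifying that $\Gamma^{ij}_k$ contracted with $\dd v^{\ell-m}$ vanishes, which can be read off from $\p_k\eta^{ij}=\Gamma^{ij}_k+\Gamma^{ji}_k$ since no entry depends on $v^1$.

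\emph{Step 2: the middle block.} Restricted to $v^2,\dots,v^{\ell-m-1}$, the middle part of $B_1$ is $(v^{\ell-m})^{-2}$ times a Hankel anti-triangular matrix $\widetilde H$. The entries of $\widetilde H$ are $4s\,v^s$, except for the anti-diagonal entry $4(\ell-m)$, which is constant. This matrix has exactly the structure treated in Section~\ref{A-type} (compare Proposition \ref{zh-07-14-1}): it is constant on the anti-diagonal and linear above it, with a grading $\deg v^s=\frac{s}{\ell-m}$ that makes it quasi-homogeneous. Copying the argument of Theorem \ref{flat metric of A}, I would solve the flat-coordinate system \eqref{zh-07-16-1} for $\widetilde H$ by quasi-homogeneous power series analytic at the origin. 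The scaling invariance forces polynomial solutions $v^\alpha+h_\alpha(v^{\alpha+1},\dots,v^{\ell-m-1})$ of degree $\frac{\ell-m-\alpha}{\ell-m}$ in the appropriate dual grading.

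\emph{Step 3: reinserting the conformal factor.} The conformal factor $(v^{\ell-m})^{-2}$ combines with the off-diagonal constant pairing $2\,\dd v^1\dd v^{\ell-m}$ to form a cone-type metric. The standard ansatz $t^\alpha=v^{\ell-m}\,\phi_\alpha$ with $\phi_\alpha$ flat for $\widetilde H$, together with $t^1=v^1+v^{\ell-m}h_1$, should produce flat coordinates. Concretely, I would substitute this ansatz into \eqref{zh-07-16-2} and check that the cross terms cancel. The function $h_1$ is then fixed by requiring that the inner product with each $\dd t^\alpha$ be constant, which gives a first-order equation whose solution is quasi-homogeneous of degree $1$.

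Step 3 is the main obstacle: showing that the cone ansatz is consistent and that $h_1$ exists as a polynomial. I would first try the direct route of computing $\eta(\dd t^\alpha,\dd t^\beta)$ and demanding constancy, reducing the problem to the Hessian conditions for $\widetilde H$. As a fallback, I would invoke Theorem 3.11 of \cite{Advance} with $k=0$, since the block coincides with the one treated there.
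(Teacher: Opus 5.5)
The paper gives no independent argument for this statement. It notes that $(\eta(\dd w^i,\dd w^j))$ coincides with the $\mathrm{diag}(W_2,W_3)$ block of (3.26) in \cite{Advance}, and imports Lemmas 3.8, 3.9 and Theorem 3.11 there with $k=0$; that is your fallback. Your direct route has the right architecture, but one step is missing.

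The structure you identify is correct. $B_1$ pairs $v^1$ only with $v^{\ell-m}$. Its middle part is $(v^{\ell-m})^{-2}\widetilde H^{ab}$, with $\widetilde H$ independent of $v^1$ and $v^{\ell-m}$. So covariantly the first block is $\dd v^1\dd v^{\ell-m}+(v^{\ell-m})^2\widetilde H_{ab}\,\dd v^a\dd v^b$, where $\widetilde H_{ab}$ is the inverse of $(\widetilde H^{ab})$. Your cone ansatz produces exactly the shape in the statement. For $C_3$, $m=0$ it gives $t^1=v^1-\frac1{12}(v^2)^2v^3$, $t^2=v^2v^3$, $t^3=v^3$, as in the paper's example. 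Compared with the citation, this buys an explanation of why the $t^\alpha$ have this form and these degrees.

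\textbf{The gap: flatness of $\widetilde H$.} The argument of Theorem \ref{flat metric of A} works only because the metric there is already known to be flat. That is what makes \eqref{zh-07-16-1} integrable, so that a fundamental system of solutions analytic at the origin exists. For $\widetilde H$ nothing in your proposal establishes this. Resembling the $A_\ell$ metric does not give it, and the matrices are not even the same: the coefficients are $4(a+b-1)$, not those of Proposition \ref{zh-07-14-1}. You must deduce flatness of $\widetilde H$ from flatness of $\eta$. From the covariant form above:
\begin{itemize}
\item $\Gamma^{\ell-m}_{ij}=0$, because $\eta_{1j}$ is constant and nothing depends on $v^1$; this is also the honest proof of your Step 1.
\item $\Gamma^a_{b1}=0$ and $\Gamma^a_{b,\ell-m}=\delta^a_b/v^{\ell-m}$.
\item $\Gamma^1_{ab}=-2v^{\ell-m}\widetilde H_{ab}$.
\item $\Gamma^a_{bc}$ is the Christoffel symbol of $\widetilde H$.
\end{itemize}
Hence the mixed products $\Gamma^a_{ce}\Gamma^e_{db}$ with $e\in\{1,\ell-m\}$ vanish, and $R^a_{bcd}(\eta)=R^a_{bcd}(\widetilde H)$. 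With this, and with $\det\widetilde H=\pm(4(\ell-m))^{\ell-m-2}$ (so the $\gamma^k_{ij}$ are polynomial), your Step 2 goes through.

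\textbf{Step 3 is routine.} Once Step 2 is done, Step 3, which you call the main obstacle, is a direct check. Suppose $\widetilde H(\dd\phi^a,\dd\phi^b)=C^{ab}$ is constant, with inverse $C_{ab}$. Put $t^a=v^{\ell-m}\phi^a$, $t^{\ell-m}=v^{\ell-m}$ and $t^1=v^1-v^{\ell-m}C_{ab}\phi^a\phi^b$. Then $\eta(\dd t^1,\dd t^{\ell-m})=2$, $\eta(\dd t^a,\dd t^b)=C^{ab}$, and all other pairings vanish. So all $t^\alpha$ are flat, Step 1 is redundant, and no differential equation for $h_1$ needs to be solved.

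\textbf{Two slips to fix.}
\begin{itemize}
\item The weights must be $\deg v^s=\frac{\ell-m-s}{\ell-m}$, not $\frac{s}{\ell-m}$. Under the latter, the constant anti-diagonal $4(\ell-m)$ breaks quasi-homogeneity. Also, $v^\alpha+h_\alpha(v^{\alpha+1},\dots)$ could then be quasi-homogeneous only with $h_\alpha=0$.
\item $h_1$ has degree $\frac{\ell-m-1}{\ell-m}$, not $1$, since $C^{ab}\ne0$ only when $\deg\phi^a+\deg\phi^b=\frac{\ell-m-1}{\ell-m}$.
\end{itemize}
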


From the  above-mentioned construction of the flat coordinates $t^1,\dots, t^\ell$, we know that they are quasi-homogeneous functions of $z^1,\dots, z^\ell$ with degrees
\begin{align}
    d_\alpha&=\deg t^\alpha=\frac{2(\ell-m-\alpha)+1}{2(\ell-m)}, \quad 1\le \alpha\le \ell-m;\label{zh-07-20-3}\\
    d_\beta&=\deg t^\beta=\frac{2(\ell-\beta)+1}{2m}, \quad \ell-m+1\le \beta\le \ell. \label{zh-07-20-4}
\end{align}
These numbers satisfy a duality relation which is similar to that of \cite{DZ1998} and \cite{Advance}. To describe this relation, let $\mathcal R$ be the Dynkin diagram of type $C_\ell$. For any given integer $0\le m\le \ell$, we separate $\mathcal R$ into two componnets, the first one is formed by the first $\ell-m$ vertices, and the second one is formed by the remaining $m$ vertices. On each component, we have an involution $\beta\mapsto \beta^*$ given by the reflection with respect to its center. Then we have
\begin{equation}
    d_\beta+d_{\beta^*}=1,\quad \beta=1, \dots, \ell, 
\end{equation}
and $\eta^{\alpha\beta}(t)$ is a nonzero constant if and only if $\beta=\alpha^*$. 

We have the following corollaries. 
\begin{cor}\label{zh-02-09-5}
    In the flat coordinates $t^1, \dots, t^{\ell+1}$, the matrix $\left(\eta^{\alpha\beta}(t)\right)$ has the form
    \begin{align}
        \begin{pmatrix}
            A_1&0\\0&A_2
        \end{pmatrix},
    \end{align}
where $A_1, A_2$ are $(\ell-m)\times(\ell-m)$ and $m\times m$ matrices respectively, and they have the form
    \begin{align}
        A_1=\begin{pmatrix}
            & & & &2& \\
            & & &4(\ell-m)& & \\
            & &\begin{sideways}$\ddots$\end{sideways}& & &\\
            &4(\ell-m)& & & &\\
            2& & & & &
        \end{pmatrix}, \quad A_2=\begin{pmatrix}
            & & & &2& \\
            & & &4m& & \\
            & &\begin{sideways}$\ddots$\end{sideways}& & &\\
            &4m& & & &\\
            2& & & & &
        \end{pmatrix}
    \end{align}
when $m, \ell-m\neq 0, 1, 2$; when $m=0$ (resp.  $m=\ell$), the matrix $A_2$ (resp. $A_1$) does not appear; when $m=1$ (resp. $m=\ell-1$), we have $A_2=1$ (resp. $A_1=1$); when $m=2$ or $m=\ell-2$, we have 
\[A_2=\begin{pmatrix}
        0&2\\2&0
    \end{pmatrix} \ \mbox{or}\  A_1=\begin{pmatrix}
        0&2\\2&0
    \end{pmatrix}. \]
\end{cor}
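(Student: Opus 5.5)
The plan is to compute $\eta^{\alpha\beta}(t)=\eta(\dd t^\alpha,\dd t^\beta)$ directly. I would start from the block form \eqref{eta (v)} of $\eta$ in the coordinates $v^1,\dots,v^\ell$ (Lemma \ref{w->v}) and use the explicit shape of the flat coordinates in Theorem \ref{flat coordinates of C}. The argument has three steps: the block structure, the vanishing off the anti-diagonal, and the values on the anti-diagonal.

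\emph{Block structure.} By Theorem \ref{flat coordinates of C}, $t^1,\dots,t^{\ell-m}$ depend only on $v^1,\dots,v^{\ell-m}$, and $t^{\ell-m+1},\dots,t^\ell$ depend only on $v^{\ell-m+1},\dots,v^\ell$. Since $\eta(\dd v^i,\dd v^j)=0$ whenever $i$ and $j$ lie in different blocks, $\eta(\dd t^\alpha,\dd t^\beta)=0$ across blocks. So $(\eta^{\alpha\beta}(t))$ is block diagonal with blocks $A_1$, $A_2$ of the stated sizes.

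\emph{Vanishing off the anti-diagonal.} Since $t$ are flat coordinates, every $\eta^{\alpha\beta}(t)$ is constant. They are also quasi-homogeneous. Indeed, $g_\lambda^{ij}(z)$ has degree $2$ and $\deg\lambda=1$, so $\eta^{ij}(z)$ has degree $1$. Hence $\eta^{\alpha\beta}(t)$ has degree $d_\alpha+d_\beta-1$, with $d_\alpha$ given by \eqref{zh-07-20-3}--\eqref{zh-07-20-4}. Equivalently, $\mathcal L_E\eta$ is a multiple of $\eta$, where $E=\sum d_\alpha t^\alpha\p_{t^\alpha}$. A nonzero constant must have degree $0$, so $\eta^{\alpha\beta}(t)=0$ unless $d_\alpha+d_\beta=1$. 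By the duality relation this means $\beta=\alpha^*$, the reflection within the corresponding block.

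\emph{Anti-diagonal values.} I work in the first block; the second is identical with $v^{\ell-m}$ replaced by $v^\ell$ and $\ell-m$ by $m$.

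For the corner entry, take $t^{\ell-m}=v^{\ell-m}$ and $t^1=v^1+v^{\ell-m}h_1$. In $B_1$ the row of $v^{\ell-m}$ has a single nonzero entry, namely $2$ in the column of $v^1$, and $h_1$ does not involve $v^1$. Hence $\eta(\dd t^1,\dd t^{\ell-m})=\p_{v^1}t^1\cdot 2=2$.

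For $2\le\alpha\le\ell-m-1$ and $\beta=\ell-m+1-\alpha$, neither $t^\alpha$ nor $t^\beta$ involves $v^1$. Terms containing $\dd v^{\ell-m}$ pair only with $\dd v^1$, so they drop out, and
\[\eta(\dd t^\alpha,\dd t^\beta)=(v^{\ell-m})^2\sum_{i,j=2}^{\ell-m-1}\p_{v^i}(v^\alpha+h_\alpha)\,\p_{v^j}(v^\beta+h_\beta)\,H_{i+j-1}.\]
Here $H_{i+j-1}=0$ for $i+j>\ell-m+1$. Since $h_\alpha$ involves only $v^i$ with $i>\alpha$, and $h_\beta$ only $v^j$ with $j>\beta$, the only surviving term is $i=\alpha$, $j=\beta$. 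This gives $(v^{\ell-m})^2H_{\ell-m}=4(\ell-m)$.

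The main obstacle is this index bookkeeping: checking that the $h$-corrections and the prefactor $v^{\ell-m}$ in $t^\alpha$ contribute nothing on the anti-diagonal. The degree argument is a useful cross-check, since it already forces the result to be constant. The small cases $m,\ell-m\in\{0,1,2\}$ follow in the same way from Remark \ref{m=0, m=1, m=2, B 2 is constant}.
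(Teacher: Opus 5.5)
Your proposal is correct and is essentially the paper's route. The paper gives no separate proof: the corollary is read off by substituting the flat coordinates of Theorem \ref{flat coordinates of C} into the block form \eqref{eta (v)} of Lemma \ref{w->v}, together with the degree/duality remark made just before the statement, and your block, vanishing and anti-diagonal steps carry this out correctly.
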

\begin{cor}\label{polynomial property of g and Gamma}
    In the flat coordinates $t^1, \dots, t^{\ell}$, the entries of the matrices $\left(g^{\alpha\beta}(t)\right), \left(\Gamma_\gamma^{\alpha\beta}(t)\right)$ are quasi-homogeneous polynomials of $t^1, \dots, t^\ell, \frac{1}{t^{\ell-m}}, \frac{1}{t^\ell}$ of degrees $d_\alpha+d_\beta$ and $d_\alpha+d_\beta-d_\gamma$. 
\end{cor}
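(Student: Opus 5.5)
The plan is to prove Corollary \ref{polynomial property of g and Gamma} by following the chain of coordinate changes $z\mapsto\tau\mapsto w\mapsto v\mapsto t$ and tracking, at each stage, which functions occur and with what degrees.

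First, in the coordinates $\tau^1,\dots,\tau^\ell$ the components $g^{ij}(\tau)$ and $\Gamma^{ij}_k(\tau)$ are quasi-homogeneous polynomials of degrees $i+j$ and $i+j-k$. This is the preceding corollary, and it follows from the generating formulas \eqref{generating of g lambda ij}, \eqref{generating of Gamma lambda k ij} together with the linear change \eqref{generating function of tau j}. The passage $\tau\mapsto w$ of Lemma \ref{tau->w} is linear and triangular. It therefore preserves polynomiality, provided we assign the degrees to $w^j$ in the same way as to $\tau^j$ (so that the triangular shifts are homogeneous); in particular $w^{\ell-m}=\tau^{\ell-m}$ and $w^\ell=\tau^\ell$.

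Next I treat $w\mapsto v$ and then $v\mapsto t$. By Lemma \ref{w->v} and Theorem \ref{flat coordinates of C}, we have $t^{\ell-m}=v^{\ell-m}=(w^{\ell-m})^{1/(2(\ell-m))}$ and $t^\ell=v^\ell=(w^\ell)^{1/(2m)}$. Every other $t^\alpha$ is $v^{\ell-m}$ or $v^\ell$ times a polynomial in the remaining $v$'s, plus lower terms. Each $v^s$ is $w^s$ multiplied by a power of $t^{\ell-m}$ or $t^\ell$, and these powers are integers when expressed in $t$, since $w^{\ell-m}=(t^{\ell-m})^{2(\ell-m)}$. The triangular structure of Theorem \ref{flat coordinates of C} lets us solve for $v^\alpha$ recursively: $v^\alpha=t^\alpha/t^{\ell-m}-h_\alpha(\dots)$ and similarly in the second block. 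Hence each $w^j$ is a polynomial in $t^1,\dots,t^\ell$ and $1/t^{\ell-m}$, $1/t^\ell$. Conversely, each $t^\alpha$ is a polynomial in the $w$'s and in $(w^{\ell-m})^{\pm1/(2(\ell-m))}$, $(w^\ell)^{\pm1/(2m)}$.

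Then I apply the tensor transformation rules
\[
g^{\alpha\beta}(t)=\frac{\p t^\alpha}{\p w^i}\frac{\p t^\beta}{\p w^j}g^{ij}(w),
\]
and the analogous rule for $\Gamma^{\alpha\beta}_\gamma$. The latter has the extra term $-\frac{\p^2 t^\alpha}{\p w^i\p w^j}\,\frac{\p w^j}{\p t^\gamma}\,\frac{\p t^\beta}{\p w^k}\,g^{ik}$, or its equivalent. By the previous paragraph, every Jacobian entry and second derivative, rewritten in $t$, is a polynomial in $t$, $1/t^{\ell-m}$ and $1/t^\ell$. Quasi-homogeneity with the stated degrees $d_\alpha+d_\beta$ and $d_\alpha+d_\beta-d_\gamma$ follows because every map in the chain respects the Euler grading, with $\deg t^\alpha=d_\alpha$ from \eqref{zh-07-20-3}–\eqref{zh-07-20-4}. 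Alternatively, it follows from $\mathcal L_E$ acting diagonally, as in the $A_\ell$ case.

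The main obstacle I expect is the integrality of the exponents. I must check that, once everything is written in $t$, no fractional powers of $t^{\ell-m}$ or $t^\ell$ survive. This requires checking that the $g^{ij}(w)$, which are polynomial in $w^{\ell-m}=(t^{\ell-m})^{2(\ell-m)}$, combine with Jacobian factors such as $\p t^{\ell-m}/\p w^{\ell-m}=\frac{1}{2(\ell-m)}(t^{\ell-m})^{1-2(\ell-m)}$ to give only integer powers. Since all such factors are integer powers of $t^{\ell-m}$, this should be automatic. The remaining subtlety is the edge cases $m\in\{0,1,2,\ell-2,\ell-1,\ell\}$ of Remark \ref{m=0, m=1, m=2, B 2 is constant}, which I would handle separately: there the relevant block degenerates, and only the corresponding inverse ($1/t^\ell$ or $1/t^{\ell-m}$) can appear.
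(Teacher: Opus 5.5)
Your proposal is correct and is essentially the argument implicit in the paper, which states this corollary without a separate proof. You invert the triangular relations of Lemma~\ref{w->v} and Theorem~\ref{flat coordinates of C} to write $w$ (hence $\tau$ and $z$) as polynomials in $t^1,\dots,t^\ell,1/t^{\ell-m},1/t^\ell$. You observe that the first and second derivatives of $t^\alpha$ with respect to $w$ involve only integer powers of $t^{\ell-m}=(w^{\ell-m})^{1/(2(\ell-m))}$ and $t^\ell=(w^\ell)^{1/(2m)}$. You then apply the transformation laws for $g$ and $\Gamma$.

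The one thing to clean up is the grading. The $d_\alpha$ are weights for $E=\sum_j z^j\,\partial/\partial z^j$ (all $\theta_j=1$). Under this grading $\deg\tau^j=\deg w^j=1$, and $g^{ij}$, $\Gamma^{ij}_k$ are homogeneous of degrees $2$ and $1$. This is what makes the shifts of Lemma~\ref{tau->w} homogeneous and gives $\deg t^{\ell-m}=\frac{1}{2(\ell-m)}$. So you should not mix in the assignment $\deg\tau^j=j$ quoted from the preceding corollary: it is incompatible with both facts, and it already fails for $C_2$, $m=1$, where $g(\mathrm{d}\tau^1,\mathrm{d}\tau^1)=-\tau^1(\tau^1+\tau^2)$.
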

\subsection{The generalized Frobenius manifold  structures} \label{zh-02-09-4}
For each fixed $0\le m\le \ell$, from Theorems \ref{main}, \ref{pencil generator of C-type} it follows that there is a generalized Frobenius manifold structure of charge $d=1$ on $\mathcal{M}_D({C_\ell},\omega_1)$ defined by the flat pencil of metrics $g, \eta$ given in \eqref{pencil generator of C-type}. The unit vector field and the Euler vector field are given by
  \begin{align}\label{the unity for A-type}
e=\eta^{\sharp}(\omega_e)=-\eta^\sharp(\mathrm{d}\log y^1),\quad  E=\sum_{\alpha=1}^{\ell}d_\alpha t^\alpha\tangentvector{t^\alpha}, 
    \end{align}
with $d_\al$ defined by \eqref{zh-07-20-3} and \eqref{zh-07-20-4}. The structure constants of the generalized Frobenius manifold are polynomials in $t^1, \dots, t^\ell, \frac{1}{t^{\ell-m}}, \frac{1}{t^\ell}$. We can determine the potential of the generalized Frobenius manifolds by the relation
\[\frac{\p^2 F}{\p t^\al\p t^\beta}(t)=\frac1{2-d_\al-d_\beta}\,\eta_{\al\xi}\eta_{\beta\zeta}g^{\xi\zeta}(t),\quad \al,\beta=1,\dots,\ell.\]
\begin{rem}\label{01-18-e}
From the above construction, we see that the generalized Frobenius manifold structures on $\mathcal{M}_{\ell,m}$ and $\mathcal{M}_{\ell,\ell-m}$ are equivalent.
\end{rem}

\subsection{Examples}
In this subsection, we give some examples to illustrate the above constructure of generalized Frobenius manifold structures associated to $(C_\ell, \omega_1)$. 

\begin{ex}
 Let $(R, \omega)=(C_2,\omega_1)$. We have the $W_a(R)$-invariant $\lambda$-Fourier polynomials
 \begin{align*}
 y^1&=e^{2\pi i x^1}+\lm e^{2\pi i(x^1-x^2)}+\lm e^{-2\pi i(x^1-x^2)}+\lm^2 e^{-2\pi i x^1},\\
 y^2&=e^{2\pi i x^2}+e^{2\pi i(2 x^1-x^2)}+\lm^2 e^{-2\pi i x^2}+\lm^2 e^{-2\pi i(2 x^1-x^2)}.
 \end{align*}
The contravariant metric on $V\otimes\mathbb{C}$ is given by
		\[\left((\dif x^i, \dif x^j)\right)=\left((\alpha^\vee_i, \alpha^\vee_j)\right)^{-1}=\begin{pmatrix}1&1\\ 1 &2\end{pmatrix}, \]
which induces the metric 
\[\left(g_\lambda^{ij}(y)\right)=\begin{pmatrix}
    -(y^1)^2+2\lambda y^2+8\lambda^2&-y^1y^2+4\lambda y^1\\[3 pt]
    -y^1y^2+4\lambda y^1&-2(y^2)^2+4(y^1)^2-8\lambda y^2
\end{pmatrix}.\]
From Theorem \ref{pencil generator of C-type} we know that we have three different choices pencil generators by taking $m=0, 1, 2$, and $g=(g^{ij})=(g_0^{ij})$ is independent of the choice of pencil generators. Note that the generalized Frobenius structure for $m=0, 2$ are isomorphic, so we only need to consider the case of $m=0$ and $m=1$. 
\vskip 0.1 truecm 
\item[\bf{Case 1.}] By taking $m=0$, we get the pencil generators
    \begin{align*}
z^1=y^1+4\lambda,\quad z^2=y^2-4\lambda,
    \end{align*}
and the metric
        \[\eta=(\eta^{ij}(z))=\begin{pmatrix}
    8z^1+2z^2& 4z^2\\
    4z^2 & -32z^1-24z^2
\end{pmatrix}. \]
The variables $\tau^\al$, $w^\al$, $v^\al$ that are introduced in the last subsection satisfy the relations
\begin{align*}
\tau^1&=z^1,\quad \tau^2=2 z^1+z^2;\\
w^1&=\tau^1-\frac16 \tau^2=v^1 v^2,\quad w^2=\tau^2=(v^2)^4.
\end{align*}
The flat coordinates are given by
    \[t^1=v^1,\quad t^2=v^2.\]
In these flat coordinate the flat pencil of metrics has the form
\begin{align*}
\left(\eta^{\alpha\beta}(t)\right)&=\begin{pmatrix}0&2\\2&0\end{pmatrix}, \\[4pt]\left(g^{\alpha\beta}(t)\right)&=
\begin{pmatrix}
    \frac{-(t^2)^9+9 t^1 (t^2)^6-27 (t^1)^2 (t^2)^3+27 (t^1)^3}{108 (t^2)^3} & \frac{(t^2)^6-24 t^1 (t^2)^3-18 (t^1)^2}{72 (t^2)^2} \\[8pt]
 \frac{(t^2)^6-24 t^1 (t^2)^3-18 (t^1)^2}{72 (t^2)^2} & \frac{3 t^1-(t^2)^3}{12 t^2} 
\end{pmatrix}.
 \end{align*}
We have the potential 
\[F=\frac{1}{48}\frac{(t^1)^3}{t^2}-\frac{1}{48}(t^1)^2(t^2)^2+\frac{1}{1440}t^1(t^2)^5-\frac{1}{36288}(t^2)^8\]
of the generalized Frobenius manifold.
The Euler vector field and the unity are given by 
\begin{align*}
    E=&\frac{3}{4}t^1\tangentvector{t^1}+\frac{1}{4}t^2\tangentvector{t^2}, \\
             e=&-\frac{4}{6t^1t^2+(t^2)^4}\left((3t^1+2(t^2)^3)\tangentvector{t^1}+3t^2\tangentvector{t^2}\right). 
    \end{align*}
\vskip 0.1truecm
\item[\bf{Case 2.}] By taking $m=1$, 
we have the pencil generators
    \begin{align*}
z^1=y^1, \quad z^2=y^2+4\lambda, 
    \end{align*}
and the metric
\[\eta=\left(\eta^{ij}(y)\right)=\begin{pmatrix}
    2z^2& 8z^1\\
    8z^1 & 8z^2
\end{pmatrix}.
\]
The variables $\tau^\al$, $w^\al$, $v^\al$ that are introduced in the last subsection satisfy the relations
\begin{align*}
\tau^1&=\frac12 z^1+\frac14 z^2,\quad \tau^2=-\frac12 z^1+\frac14 z^2;\\[3pt]
w^1&=\tau^1=(v^1)^2,\quad w^2=\tau^2=(v^2)^2.
\end{align*}
The flat coordinates are given by
\[t^1=v^1,\quad t^2=v^2.\]
In these flat coordinate the flat pencil of metrics has the form
\[\left(\eta^{\al\beta}\right)=\begin{pmatrix}1 & 0\\ 0 &1\end{pmatrix},\quad
\left(g^{ij}(t)\right)=\begin{pmatrix}
    -\frac{1}{4}((t^1)^2+(t^2)^2)&-\frac{1}{2}t^1t^2\\[3pt]
    -\frac{1}{2}t^1t^2&  -\frac{1}{4}((t^1)^2+(t^2)^2)
\end{pmatrix}. \]
The potential of the generalized Frobenius manifold has the form
\[F=-\frac{1}{48}(t^1)^4-\frac18 (t^1)^2(t^2)^2-\frac1{48}(t^2)^4, \]
and the Euler vector field  and the unity are given by
\[E=\frac{1}{2}t^1\frac{\partial}{\partial t^1}+\frac{1}{2}t^2\frac{\partial}{\partial t^2}, \quad
e=-\frac{2}{(t^1)^2-(t^2)^2}\left(t^1\frac{\partial}{\partial t^1}-t^2\frac{\partial}{\partial t^2}\right). \]
\end{ex}

\begin{ex} 
Let $(R, \omega)=(C_3,\omega_1)$. We have the $W_a(R)$-invariant $\lambda$-Fourier polynomials
    \begin{align*}
y^1=&\,e^{2\pi i x^1}+\lm e^{2\pi i(x^1-x^2)}+\lm e^{-2\pi i(x^1-x^2)}+\lm e^{2\pi i(x^2-x^3)}+\lm e^{-2\pi i(x^2-x^3)}+\lm^2 e^{-2\pi i x^1},\\
y^2=&\,e^{2\pi i x^2}+e^{2\pi i (2 x^1-x^2)}+e^{2\pi i(x^1+x^2-x^3)}+e^{2\pi i(x^1-x^2+x^3)}\\
&+\lm e^{2\pi i(x^1-x^3)}
+\lm e^{-2\pi i(x^1-2x^2+x^3)}+
\lm e^{-2\pi i(x^1-x^3)}+\lm e^{2\pi i(x^1-2x^2+x^3)}\\
&+\lm^2 e^{-2\pi i x^2}+
\lm^2 e^{-2\pi i(2x^1-x^2)}
+\lm^2 e^{-2\pi i(x^1-x^2+x^3)}+\lm^2 e^{-2\pi i(x^1+x^2-x^3)},\\
y^3=&\,e^{2\pi i x^3}+e^{2\pi i(2x^1-x^3)}+e^{2\pi i(2x^2-x^3)}+e^{2\pi i(2x^1-2x^2+x^3)}+\lm^2 e^{-2\pi i x^3}\\
&+\lm^2 e^{-2\pi i(2x^1-2x^2+x^3)}+
\lm^2 e^{-2\pi i(2x^1-x^3)}+\lm^2
e^{-2\pi i(2x^2-x^3)}.
    \end{align*}
The contravariant metric on $V\otimes\mathbb{C}$ is given by
\[\left((\dif x^i, \dif x^j)\right)=\left((\alpha^\vee_i, \alpha^\vee_j)\right)^{-1}=\begin{pmatrix}1&1&1\\ 1 &2&2\\1&2&3\end{pmatrix},\]
which induces the metric 
\[(g_\lambda^{ij}(y))=\begin{pmatrix}
    -(y^1)^2+2\lambda y^2+12\lambda^2&-y^1y^2+8\lambda y^1+3\lambda y^3&-y^1y^3+4\lambda y^2\\[3pt]
    -y^1y^2+8\lambda y^1+3\lambda y^3&-2(y^2)^2+8(y^1)^2+2y^1y^3-8\lambda y^2&-2y^2y^3+4y^1y^2-12\lambda y^3\\[3pt]
    -y^1y^3+4\lambda y^2&-2y^2y^3+4y^1y^2-12\lambda y^3&-3(y^3)^2+4(y^2)^2-8y^1y^3
\end{pmatrix}. \]
From Theorem \ref{pencil generator of C-type} we know that we have four different choices of pencil generators by take $m=0, 1, 2, 3$, and $g=(g^{ij})=(g_0^{ij})$ is independent of the choice of pencil generators. Note that the generalized Frobenius manifold structure for $m=0$ is isomorphic to the one for $m=3$, and the generalized Frobenius manifold structure for $m=1$ is isomorphic to the one for $m=2$, so we only need to consider the case of $m=0$ and of $m=1$. 
\vskip 0.1truecm
\item[\textbf{Case 1}.] By taking $m=0$ 
 we get the pencil generators
    \begin{align*}
z^1=y^1+6\lambda,\quad z^2=y^2-12\lambda, \quad z^3=y^3+8\lambda, 
    \end{align*}
and the metric
        \[\eta=\left(\eta^{ij}(z)\right)=\begin{pmatrix}
    12z^1+2z^2& -4z^1+6z^2+3z^3&8z^1+4z^2+6z^3\\[3pt]
    -4z^1+6z^2+3z^3 & -112z^1-56z^2-12z^3 &48z^1-8z^2-36z^3\\[3pt] 8z^1+4z^2+6z^3&48z^1-8z^2-36z^3&64z^1+96z^2+96z^3
\end{pmatrix}. \]
The variables $\tau^\al$, $w^\al$, $v^\al$ satisfy the relations
\begin{align*}
\tau^1&=z^1,\quad \tau^2=4 z^1+z^2,\quad \tau^3=4 z^1+2 z^2+z^3;\\[3pt]
w^1&=\tau^1-\frac16\tau^2+\frac1{30}\tau^3=v^1 v^3,\quad w^2=\tau^2-\frac14\tau^3=v^2 (v^3)^4,\quad w^3=\tau^3=(v^3)^6.
\end{align*}
The flat coordinates satisfy the relations
\[t_1=v^1-\frac1{12} (v^2)^2 v^3,\quad t_2=v^2 v^3,\quad t^3=v^3.\]
In these flat coordinate the flat metric has the form
\[(\eta^{\alpha\beta}(t))=\begin{pmatrix}
    0&0&2\\0&12&0\\2&0&0
\end{pmatrix}. \]
The potential of the generalized Frobenius manifold has the expression
\begin{align*}
    F=&\frac{1}{24}\frac{(t^1)^2t^2}{t^3}-\frac{1}{48} (t^1)^2 (t^3)^2-\frac{1}{216}\frac{t^1 (t^2)^3}{ (t^3)^2}-\frac{1}{288} t^1 (t^2)^2 t^3\\&+\frac{1}{1440}t^1t^2(t^3)^4-\frac{1}{60480}t^1 (t^3)^7+\frac{1}{4320 }\frac{(t^2)^5}{(t^3)^3}-\frac{1}{6912}(t^2)^4\\&+\frac{1}{17280}(t^2)^3 (t^3)^3-\frac{1}{34560}(t^2)^2 (t^3)^6+\frac{1}{345600}t^2(t^3)^9-\frac{1}{7603200}(t^3)^{12},
\end{align*}
and the Euler vector field and the unity are given by
\begin{align*}
E=&\,\frac{5}{6}t^1\tangentvector{t^1}+\frac{1}{2}t^2\tangentvector{t^2}+\frac{1}{6}t^1\tangentvector{t^1}, \\
    e=&-\frac{120}{10(t^2)^2+120t^1t^3+20t^2(t^3)^3+(t^3)^6}\left(\left(2t^1+t^2t^3+\frac{1}{10}(t^3)^5\right)\tangentvector{t^1}\right.
   \\
   &\left.+2\left(t^2+(t^3)^3\right)\tangentvector{t^2}+2t^3\tangentvector{t^3}\right). 
   \end{align*}
   
\item[\textbf{Case 2.}] By taking $m=1$ we
get the pencil generators
\[ z^1=y^1+2\lambda, \quad z^2=y^2+4\lambda, \quad z^3=y^3-8\lambda, \] and
and the metric
\[\eta=\left(\eta^{ij}(z)\right)=\begin{pmatrix}
    4z^1+2z^2&12z^1+2z^2+3z^3&-8z^1+4z^2+2z^3\\[1pt]
    12z^1+2z^2+3z^3&-16z^1+8z^2-4z^3&-16z^1-24z^2-4z^3\\[1pt]
    -8z^1+4z^2+2z^3&-16z^1-24z^2-4z^3&-64z^1-32z^2-32z^3
\end{pmatrix}. \]
The variables $\tau^\al$, $w^\al$, $v^\al$ satisfy the relations
\begin{align*}
\tau^1&=\frac34 z^1+\frac18 z^2-\frac1{16} z^3,\quad \tau^2=z^1+\frac12 z^2+\frac14 z^3,\quad \tau^3=-\frac14 z^1+\frac18 z^2-\frac1{16}z^3;\\[3pt]
w^1&=\tau^1-\frac16\tau^2=v^1 v^2,\quad w^2=\tau^2=(v^2)^4,\quad w^3=\tau^3=(v^3)^2.
\end{align*}
The flat coordinates are given by
\[t_1=v^1,\quad t_2=v^2,\quad t^3=v^3.\]
In these flat coordinate the flat metric has the form
\[(\eta^{\alpha\beta}(t))=\begin{pmatrix}
    0&2&0\\2&0&0\\0&0&1
\end{pmatrix}.\]
The potential of the generalized Frobenius manifold has the expression
\begin{align*}
    F=&\,\frac{1}{48}\frac{(t^1)^3}{t^2}-\frac{1}{48} (t^1)^2 (t^2)^2+\frac{1}{1440}t^1 (t^2)^5-\frac{1}{36288}(t^2)^8\\&-\frac{1}{8} t^1 t^2(t^3)^2+\frac{1}{96}  (t^2)^4(t^3)^2 -\frac{1}{48}(t^3)^4,
\end{align*}
and the Euler vector field and the unity are given by
\begin{align*}
E&=\frac{3}{4}t^1\tangentvector{t^1}+\frac{1}{4}t^2\tangentvector{t^2}+\frac{1}{2}t^3\tangentvector{t^3}, \\
e&=-\frac{12}{6t^1t^2+(t^2)^4-6(t^3)^2}\left(\left(t^1+\frac{2}{3}(t^2)^3\right)\tangentvector{t^1}+t^2\tangentvector{t^2}-t^3\tangentvector{t^3}\right). 
\end{align*}
\end{ex}

\subsection{Landau-Ginzburg superpotential}
As for the $\mathcal{M}_D(A_\ell,\omega_\ell)$ case, in this subsection we are to represent the generalized Frobenius manifold structures on $\mathcal{M}_D(C_\ell,\omega_1)$ in terms of superpotentials.

Consider the following rational functions of $p$:
\begin{equation}\label{01-18-b}
   \Lambda(p)=\frac{p^2-1}{p^{2m}}\left(\sum_{j=1}^\ell a_j p^{2(\ell-j)}\right),\quad a_1,\dots,a_\ell\in\mathbb{C},\,m=0,\dots,\ell.
\end{equation}
Let $\widetilde{\mathcal{M}}_{\ell,m}$ be the space
\[\widetilde{\mathcal{M}}_{\ell,m}=\left\{(a_1,\dots,a_\ell)\in\mathbb{C}^\ell\right\}.\]
We define the following two tensors on $\widetilde{\mathcal{M}}_{\ell,m}$ as follows:
\begin{align}\label{01-13-1}
    \tilde{\eta}(\p',\p'')&=\sum_{q:\,\Lambda'(q)=0}\mathop{\mathrm{Res}}_{p=q}\frac{\p'(\Lambda)\p''(\Lambda)}{\Lambda'(p)}\frac{\dd p}{(p^2-1)^2},\\   
     \tilde{c}(\p',\p'',\p''')&=\sum_{q:\,\Lambda'(q)=0}\mathop{\mathrm{Res}}_{p=q}\frac{\p'(\Lambda)\p''(\Lambda)\p'''(\Lambda)}{\Lambda'(p)}\frac{\dd p}{(p^2-1)^2},\label{zh-02-09-1}
\end{align}
where the summation runs over the critical points of $\Lambda(p)$, including the critical point at infinity for the $m=\ell$ case.

\begin{lem}
    For any given $m=0,\dots,\ell$, the tensor $\tilde{\eta}$ is a flat metric on $\widetilde{\mathcal{M}}_{\ell,m}$, and it has flat coordinates
  \begin{align}
&\tilde{t}^\al=\frac{1}{2(\ell-m-\al)+1}\mathop{\mathrm{Res}}_{p=\infty}\left(\Lambda^{\frac{2(\ell-m-\al)+1}{2(\ell-m)}}(p)\frac{\dd p}{p^2-1}\right),\quad \al=1,\dots,\ell-m;\label{01-13-3a}\\
        &\tilde{t}^\beta=\frac{1}{2(\ell-\beta)+1}\mathop{\mathrm{Res}}_{p=0}\left(\Lambda^{\frac{2(\ell-\beta)+1}{2m}}(p)\frac{\dd p}{p^2-1}\right),\quad \beta=\ell-m+1,\dots,\ell.\label{01-13-3b}
    \end{align}
\end{lem}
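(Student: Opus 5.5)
The plan is to follow the pattern of the $A_\ell$ lemma: rewrite $\tilde\eta$ as a sum of residues at the poles of $\Lambda$, and then evaluate these residues in the local parameters $k=\Lambda^{1/(2(\ell-m))}$ near $p=\infty$ and $k=\Lambda^{1/(2m)}$ near $p=0$.

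\textbf{Step 1: a logarithmic coordinate.} Introduce
\[
s=\tfrac12\log\frac{p-1}{p+1},\qquad \dd s=\frac{\dd p}{p^2-1}.
\]
In terms of $s$ the integrand of \eqref{01-13-1} becomes $\p'(\Lambda)\p''(\Lambda)\,\dd s/\Lambda_s$, where $\Lambda_s=\frac{\p\Lambda}{\p s}=(p^2-1)\Lambda'(p)$. The coordinate $s$ is a holomorphic local coordinate near $p=\infty$, where $s=-1/p+O(p^{-3})$. It is also holomorphic near $p=0$, where $s=\tfrac12\log(-1)-p+O(p^3)$.

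\textbf{Step 2: move the residues to the poles of $\Lambda$.} The variations $\p'\Lambda,\p''\Lambda$ are linear combinations of $(p^2-1)p^{2(\ell-j)-2m}$. Hence they carry a factor $(p^2-1)$, so the integrand has no poles at $p=\pm1$ despite the factor $(p^2-1)^{-2}$. Its only other possible poles are at $p=0$ and $p=\infty$. Applying the residue theorem on $\mathbb P^1$, I would obtain
\[
\tilde\eta(\p',\p'')=-\Bigl(\mathop{\mathrm{Res}}_{p=\infty}+\mathop{\mathrm{Res}}_{p=0}\Bigr)\frac{\p'(\Lambda)\p''(\Lambda)}{\Lambda'(p)}\frac{\dd p}{(p^2-1)^2}.
\]
For $m=\ell$ (and symmetrically $m=0$) one of these points is itself a critical point and is included in the sum, so only the contribution at the other pole remains. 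I would treat these two cases by the same computation with only one block present.

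\textbf{Step 3: compute in the local parameters.} Write $s=s(k,a)$ near each pole. The identity $\p_a\Lambda|_{p}=-\Lambda_s\,\p_a s|_{k}$ converts each local residue into
\[
\mathop{\mathrm{Res}}\,\p' s\,\p'' s\,\dd\Lambda=\mathop{\mathrm{Res}}_k\,\p' s\,\p'' s\;N k^{N-1}\dd k,\qquad N=2(\ell-m)\ \text{or}\ 2m.
\]
From definitions \eqref{01-13-3a} and \eqref{01-13-3b}, by the same manipulation as in the $A_\ell$ lemma (integrating $\Lambda^{r/N}\dd s=k^r\dd s$ by parts), I expect to derive expansions of the following form.
\begin{itemize}
\item Near $\infty$: $s=-\dfrac1k+\sum_{\al=1}^{\ell-m}c_\al\,\tilde t^\al k^{-(2(\ell-m-\al)+1)-1}\cdot k+\dots$, with the $\tilde t^\al$-dependence entering exactly through the term $k^{-2(\ell-m-\al)-1}$, up to $O(k^{-N-1})$.
\item Near $0$: an analogous expansion in powers of $k^{-1}$, with the $\tilde t^\beta$ for $\beta>\ell-m$ entering.
\end{itemize}
Then $\p s/\p\tilde t^\al$ at fixed $k$ is a constant multiple of $k^{-(2(\ell-m-\al)+1)}$ plus $O(k^{-N-1})$ near $\infty$, and is $O(k^{-N-1})$ near $0$; the symmetric statement holds for $\beta>\ell-m$. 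The residues then give constants that are nonzero exactly when the two exponents add to $N$, i.e. when $\beta=\al^*$ in the same block, and they vanish across blocks. This produces a constant anti-diagonal block matrix, which shows that $\tilde\eta$ is flat with flat coordinates $\tilde t^\al$. Nondegeneracy is immediate from this form.

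\textbf{Main obstacle.} I expect the hardest point to be the cross-terms: showing that $\p s/\p\tilde t^\al$ for an index of the $\infty$-block is genuinely of order $O(k^{-N-1})$ in the expansion at $p=0$, and vice versa. This requires that $\tilde t^\al$, defined by the residue at $\infty$, together with the $\tilde t^\beta$ defined at $0$, form a coordinate system in which the two Laurent data decouple. I would verify this by a degree and triangularity count. Each $\tilde t^\al$ is $a_\al$ plus a function of $a_1,\dots,a_{\al-1}$ for the $\infty$-block, respectively $a_\beta$ plus a function of $a_{\beta+1},\dots,a_\ell$ for the $0$-block. Under the scaling of Corollary~\ref{polynomial property of g and Gamma}, the degrees $d_\al$ match \eqref{zh-07-20-3}--\eqref{zh-07-20-4}. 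The off-block terms therefore cannot reach the exponents that would contribute to the residues.
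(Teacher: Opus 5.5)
Your proposal is essentially the paper's proof. It uses the same variable $s=\frac12\log\frac{p-1}{p+1}$ and the same local parameters $k_1=\Lambda^{1/(2(\ell-m))}$ at $p=\infty$ and $k_2=\Lambda^{1/(2m)}$ at $p=0$. It makes the same reduction of the critical-point sum to $-\bigl(\mathop{\mathrm{Res}}_{p=\infty}+\mathop{\mathrm{Res}}_{p=0}\bigr)$, with only one term for $m=0,\ell$. And it uses the same identity $\p\Lambda|_p=-\Lambda'(p)\,\p p|_k$, which produces constant anti-diagonal blocks with entries $2(\ell-m)$ and $2m$.

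The cross-term point you flag is passed over in the paper, but it needs no degree count. Because $\Lambda$ is even and $s$ (resp.\ $s-\frac{\pi i}{2}$) is odd in $p$, only odd powers of $1/k_1$ (resp.\ $1/k_2$) occur. The coefficients of $k_1^{-1},\dots,k_1^{-(2(\ell-m)-1)}$ (resp.\ $k_2^{-1},\dots,k_2^{-(2m-1)}$) are exactly the coordinates $\tilde t^\al$ (resp.\ $\tilde t^\beta$). So differentiating with respect to a coordinate of the other block kills them.

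Your triangularity description should be corrected, though:
\begin{itemize}
\item The $k_1^{-1}$ coefficient is $\tilde t^{\ell-m}=-a_1^{1/(2(\ell-m))}$ itself, so there is no separate $-1/k$ term.
\item $\tilde t^\al$ depends only on $a_1,\dots,a_{\ell-m-\al+1}$, not on $a_1,\dots,a_\al$, and it is not of the form $a_\al+f(a_1,\dots,a_{\al-1})$.
\item $\tilde t^\beta$ depends only on $a_\beta,\dots,a_\ell$.
\end{itemize}
Each block is then triangular with nonvanishing diagonal when $a_1a_\ell\neq0$. That is what actually makes the $\tilde t$ a coordinate system.
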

\begin{proof}
    Let $k_1,k_2$ be the roots of $\Lambda(p)$ which have the following expansions:
\begin{align}\label{01-13-2}
k_1&=\Lambda^{\frac{1}{2(\ell-m)}}=a_1^{\frac{1}{2(\ell-m)}}\left(p+\frac{a_2-a_1}{2(\ell-m)a_1}\frac1p+O\left(\frac{1}{p^2}\right)\right),\quad p\to\infty,\\
k_2&=\Lambda^{\frac{1}{2m}}=(-a_\ell)^{\frac{1}{2m}}\left(\frac1p+\frac{a_\ell-a_{\ell-1}}{2ma_\ell}p+O\left(p^2\right)\right),\quad p\to0.
    \end{align}
We assume that for $z\in\mathbb{C}$, $\mathrm{arg}z\in(-\pi,\pi]$, then by using \eqref{01-13-3a}, \eqref{01-13-3b}, we obtain
    \begin{align}
        \frac{1}{2}\log\frac{p-1}{p+1}=\begin{cases}
            \sum_{\alpha=1}^{\ell-m}\frac{\tilde{t}^{\alpha}}{k_1^{2(\ell-m-\alpha)+1}}+O\left(1/k_1^{2(\ell-m)}\right),&k_1\to\infty,\\
            \frac{\pi}{2}i+\sum_{\beta=\ell-m+1}^{\ell}\frac{\tilde{t}^{\beta}}{k_2^{2(\ell-\beta)+1}}+O\left(1/k_2^{2m}\right),&k_2\to\infty,
        \end{cases}
    \end{align}
    from which it follows that
\begin{align}
    &\frac{1}{p^2-1}\frac{\p p(k,\tilde{t})}{\p\tilde{t}^\al}=\begin{cases}
        \frac{1}{k_1^{2(\ell-m-\al)+1}}+O\left(1/k_1^{2(\ell-m)}\right),&k_1\to\infty,\\
        O\left(1/k_2^{2m}\right),&k_2\to\infty,
    \end{cases}\quad \mbox{for}\ \alpha=1,\dots,\ell-m,\label{01-18-1}\\
    &\frac{1}{p^2-1}\frac{\p p(k,\tilde{t})}{\p\tilde{t}^\beta}=\begin{cases}
       O\left(1/k_1^{2(\ell-m)}\right),\quad &k_1\to\infty,\\
        \frac{1}{k_2^{2(\ell-\beta)+1}}+O\left(1/k_2^{2m}\right),\quad &k_2\to\infty,
    \end{cases}\quad \mbox{for}\ \beta=\ell-m+1,\dots,\ell.\label{01-18-2}
\end{align}
By using the implicit function theorem one can obtain the following relation:
\[\p_{\tilde{t}^\al}(\Lambda(p(k,\tilde{t}))=-\Lambda'(p)\frac{\p p(k,\tilde{t})}{\p\tilde{t}^\al},\quad\dd\Lambda=2(\ell-m)k_1^{2(\ell-m)-1}\dd k_1=2mk_2^{2m-1}\dd k_2.\]
Thus, when $m\ne 0, \ell$ we have
\begin{align}
    &\tilde{\eta}(\p_{\tilde{t}^\al},\p_{\tilde{t}^\beta})=-\left(\mathop{\mathrm{Res}}_{p=\infty}+\mathop{\mathrm{Res}}_{p=0}\right)\frac{1}{(p^2-1)^2}\frac{\p p}{\p\tilde{t}^\al}\frac{\p p}{\p\tilde{t}^\beta}\dd\Lambda,\notag\\
    =&-\left(\mathop{\mathrm{Res}}_{k_1=\infty}\frac{2(\ell-m)}{(p^2-1)^2}\frac{\p p}{\p\tilde{t}^\al}\frac{\p p}{\p\tilde{t}^\beta}k_1^{2(\ell-m)-1}\dd k_1+\mathop{\mathrm{Res}}_{k_2=\infty}\frac{2m}{(p^2-1)^2}\frac{\p p}{\p\tilde{t}^\al}\frac{\p p}{\p\tilde{t}^\beta}k_2^{2m-1}\dd k_2\right).\label{zh-02-08}
\end{align}
By using \eqref{01-18-1}, \eqref{01-18-2}, we arrive at
\[(\tilde{\eta}_{\al\beta})=(\tilde{\eta}(\p_{\tilde{t}^\al},\p_{\tilde{t}^\beta}))=\begin{pmatrix}
    \tilde{A}_1& \\
     &\tilde{A}_2
\end{pmatrix},\]
where $\tilde{A}_1,\tilde{A}_2$ are $(\ell-m)\times (\ell-m)$ and $m\times m$ matrices, which are anti-diagonal matrices with anti-diagonal elements $2(\ell-m)$ and $2m$ respectively. 

When $m=0$, the function $\Lambda(p)$ has a critical point at $p=0$; when $m=\ell$, it has a critical point at $p=\infty$. Thus, in the derivation of  $\tilde{\eta}(\p_{\tilde{t}^\al},\p_{\tilde{t}^\beta})$ as in \eqref{zh-02-08}, we only need to take residue at $p=\infty$ for the $m=0$ case, and to take residue at $p=0$ for the $m=\ell$ case. In these two cases the matrix $(\tilde{\eta}_{\al\beta})$ is anti-diagonal with anti-diagonal elements $2\ell$. The lemma is proved.
\end{proof}

We are to show that the operation of multiplication on the tangent spaces of $\widetilde{\mathcal{M}}_{\ell,m}$ defined by \eqref{zh-01-07-1} yields a generalized Frobenius manifold structure on $\widetilde{\mathcal{M}}_{\ell,m}$. Let $\pm q_1,\dots,\pm q_\ell$ be the distinct critical points of $\Lambda(p)$, where we assume that $q_i\neq\pm1$. Then for $m\not=\ell$ we have
\begin{equation}
\Lambda'(p)=\frac{2(\ell-m)a_1}{p^{2m+1}}\prod_{i=1}^\ell(p^2-q_i^2).
\label{01-28-0}
\end{equation}
When $m=0$, the function $\Lambda(p)$ has a critical point at $p=0$, which we denote by $q_\ell=0$. For the $m=\ell$ case, we can easily see, by making a change of variable $p\to 1/p$, that the formulae \eqref{01-13-1},
\eqref{zh-02-09-1} yield the same Frobenius manifold structure as for the $m=0$ case, so in what follows we assume that $m\ne \ell$.

\begin{lem} The following relations hold true:
\begin{equation}\label{01-28-1}
\Lambda''(q_i)=\Lambda''(-q_i)=\left.\frac{c_{i,m}p\Lambda'(p)}{p^2-q_i^2}\right|_{p=q_i},\end{equation}
where $c_{i,m}=2-\delta_{i,\ell}\delta_{m,0}$.
\end{lem}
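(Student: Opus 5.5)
The plan is to verify the identity directly from the factorization \eqref{01-28-0} of $\Lambda'(p)$. The only other ingredient is the parity of $\Lambda$. Since $\Lambda(p)$ in \eqref{01-18-b} is a function of $p^2$, it is even, so $\Lambda'$ is odd and $\Lambda''$ is even. This gives $\Lambda''(-q_i)=\Lambda''(q_i)$ at once, and only the value at $p=q_i$ needs to be computed.

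For a fixed critical point $q_i$ with $q_i\neq 0$, I will factor out the corresponding quadratic factor:
\[
\Lambda'(p)=(p^2-q_i^2)R_i(p),\qquad R_i(p)=\frac{2(\ell-m)a_1}{p^{2m+1}}\prod_{j\neq i}(p^2-q_j^2).
\]
Differentiating and evaluating at $p=q_i$ kills the term containing $(p^2-q_i^2)$, which leaves
\[
\Lambda''(q_i)=2q_iR_i(q_i)=\left.\frac{2p\Lambda'(p)}{p^2-q_i^2}\right|_{p=q_i}.
\]
This is exactly \eqref{01-28-1} with $c_{i,m}=2$. One could also check $\Lambda''(-q_i)$ directly: $R_i$ is odd, so $-2q_iR_i(-q_i)=2q_iR_i(q_i)$, consistent with the parity argument.

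The exceptional case is $m=0$ with $i=\ell$, where $q_\ell=0$. Then the factor $p^2-q_\ell^2=p^2$ partly cancels the prefactor $p^{-1}$, giving
\[
\Lambda'(p)=2\ell a_1\,p\prod_{j<\ell}(p^2-q_j^2).
\]
Hence
\[
\Lambda''(0)=2\ell a_1\prod_{j<\ell}(-q_j^2).
\]
On the other side, $\frac{p\Lambda'(p)}{p^2}=\frac{\Lambda'(p)}{p}=2\ell a_1\prod_{j<\ell}(p^2-q_j^2)$, whose value at $p=0$ is the same number. So the constant here is $c_{\ell,0}=1$, matching $2-\delta_{i,\ell}\delta_{m,0}$.

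The only point needing care is this degenerate case. The same bookkeeping of powers of $p$ shows why $c$ drops from $2$ to $1$: the zero of $\Lambda'$ at the origin is simple, not a pair $\pm q_i$ merging into a double root, so one loses the factor $2q_i$ that comes from differentiating $p^2-q_i^2$. I will also note that the assumption that the $\pm q_i$ are distinct, together with the form of \eqref{01-28-0}, guarantees that all these zeros are simple. This ensures the quotients above are regular at $p=q_i$.
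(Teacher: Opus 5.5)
Your proposal is correct and follows essentially the paper's route: the paper differentiates the factorization \eqref{01-28-0} logarithmically, $\Lambda''=\Lambda'\bigl(\sum_i \frac{2p}{p^2-q_i^2}-\frac{2m+1}{p}\bigr)$, and evaluates at $\pm q_i$, which is the same product-rule computation as factoring out $p^2-q_i^2$, and it treats the case $m=0$, $i=\ell$ separately just as you do. In that degenerate case your value $\Lambda''(0)=\Lambda'(p)/p\,|_{p=0}$ is the right one; the paper's intermediate expression $-\Lambda'(p)/p\,|_{p=0}$ has a sign slip, though its final formula with $c_{\ell,0}=1$ agrees with yours.
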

\begin{proof}
From \eqref{01-28-0} it follows that
\[\Lambda''(p)=\Lambda'(p)\left(\sum_{i=1}^\ell\frac{2p}{p^2-q_i^2}-\frac{2m+1}{p}\right).\]
So we get
\begin{align*}
\Lambda''(\pm q_i)&=\begin{cases}
\left.\frac{2p\Lambda'(p)}{p^2-q_i^2}\right|_{p=\pm q_i},&i\not=\ell \text{ or } i=\ell,m\not=0\\
-\frac{\Lambda'(p)}{p}\Big|_{p=0},&i=\ell,m=0.
\end{cases}\\
&=\left.\frac{c_{i,m}p\Lambda'(p)}{p^2-q_i^2}\right|_{p=\pm q_i}.
\end{align*}
The lemma is proved.
\end{proof}
Let $u_i=\Lambda(q_i)=\Lambda(-q_i)$ be the critical values of $\Lambda(p)$, then we have $\p_{u_i}\Lambda(p)|_{p=\pm q_j}=\delta_{ij}$. By using the Lagrange interpolation formula, we get
\begin{equation}\label{01-13-5}
    \p_{u_i}\Lambda(p)=\frac{c_{i,m}(p^2-1)p\Lambda'(p)}{(q_i^2-1)\Lambda''(q_i)(p^2-q_i^2)},
\end{equation}
which implies that
\begin{align}
    &\tilde{\eta}(\p_{u_i},\p_{u_j})=\sum_{k=1}^\ell\mathop{\mathrm{Res}}_{p=\pm q_k}\frac{\p_{u_i}\Lambda(p)\cdot\p_{u_j}\Lambda(p)}{\Lambda'(p)}\frac{\dd p}{(p^2-1)^2}\notag\\
    =&\sum_{k=1}^\ell\mathop{\mathrm{Res}}_{p=\pm q_k}\frac{c_{i,m}^2p^2\Lambda'(p)}{(q_i^2-1)(q_j^2-1)\Lambda''(q_i)\Lambda''(q_j)(p^2-q_i^2)(p^2-q_j^2)}\dd p\notag\\
    =&\frac{c_{i,m}^2\delta_{ij}}{(q_i^2-1)^2(\Lambda''(q_i))^2}\mathop{\mathrm{Res}}_{p=\pm q_i}\frac{p^2\Lambda'(p)}{(p^2-q_i^2)^2}\dd p=\frac{c_{i,m}\delta_{ij}}{(q_i^2-1)^2\Lambda''(q_i)}.\label{zh-02-08-2}
\end{align}
Similarly, we have
\begin{equation}
\tilde{c}(\p_{u_i},\p_{u_j},\p_{u_k})=\frac{c_{i,m}\delta_{ij}\delta_{ik}}{(q_i^2-1)^2\Lambda''(q_i)}.\label{zh-02-08-3}
\end{equation}
From \eqref{zh-02-08-2} and \eqref{zh-02-08-3} it follows that
\[\p_{u_i}\cdot\p_{u_j}=\delta_{ij}\p_{u_i},\quad i, j=1,\dots,\ell.\]
Thus $u_1,\dots,u_\ell$ are canonical coordinates of the multiplication, and the unit vector field and the Euler vector field are given by
\[\tilde{e}=\sum_{i=1}^\ell\p_{u_i},\quad\tilde{E}=\sum_{i=1}^\ell u_i\p_{u_i}.\]

\begin{lem}The unit vector field and the Euler vector field can also be represented in the form
   \begin{align}
    &\tilde{e}=-2\,\mathrm{grad}_{\tilde{\eta}}\log(a_1+\dots+a_\ell),\label{01-13-6a}\\
    &\tilde{E}=\sum_{\al=1}^\ell a_\al\p_{a_\al}=\sum_{\al=1}^{\ell-m}\frac{2(\ell-m-\al)+1}{2(\ell-m)}\tilde{t}^\al\p_{\tilde{t}^\al}+\sum_{\beta=1}^{m}\frac{2(\ell-\beta)+1}{2m}\tilde{t}^{\ell-m+\beta}\p_{\tilde{t}^{\ell-m+\beta}},\label{01-13-6b}
\end{align}
\end{lem}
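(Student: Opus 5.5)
The plan is to work in the canonical coordinates $u_1,\dots,u_\ell$, where $\tilde e=\sum_i\p_{u_i}$, $\tilde E=\sum_i u_i\p_{u_i}$, and by \eqref{zh-02-08-2} the metric is diagonal:
\[
\tilde\eta(\p_{u_i},\p_{u_j})=f_i\delta_{ij},\qquad f_i=\frac{c_{i,m}}{(q_i^2-1)^2\Lambda''(q_i)} .
\]
Hence $\mathrm{grad}_{\tilde\eta}\varphi=\sum_i f_i^{-1}\,\p_{u_i}\varphi\,\p_{u_i}$. For $S:=a_1+\dots+a_\ell$, formula \eqref{01-13-6a} is then equivalent to the $\ell$ scalar identities
\[
\p_{u_i}S=-\tfrac12\,f_i\,S,\qquad i=1,\dots,\ell .
\]
The proof of \eqref{01-13-6a} therefore reduces to computing $\p_{u_i}S$ explicitly.

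\textbf{Computing $\p_{u_i}S$.} I will use the observation that $S$ is a point value of the superpotential:
\[
S=\sum_j a_j p^{2(\ell-j)}\Big|_{p=1}=\lim_{p\to1}\frac{p^{2m}\Lambda(p)}{p^2-1}.
\]
Since $\p_{u_i}$ commutes with this evaluation, \eqref{01-13-5} gives
\[
\p_{u_i}S=\lim_{p\to1}\frac{c_{i,m}\,p^{2m+1}\Lambda'(p)}{(q_i^2-1)\Lambda''(q_i)(p^2-q_i^2)} .
\]
Differentiating $\Lambda=(p^2-1)p^{-2m}\sum_j a_jp^{2(\ell-j)}$ gives $\Lambda'(1)=2S$. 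Substituting this, and using $1-q_i^2=-(q_i^2-1)$, expresses $\p_{u_i}S$ as a multiple of $f_iS$.

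\textbf{Main obstacle: the normalisation constant.} The multiple obtained above must be matched with the coefficient $-2$ in \eqref{01-13-6a}. This is where all the numerical factors enter:
\begin{itemize}
\item the factor $c_{i,m}$ in \eqref{01-28-1};
\item the factor $\tfrac12$ that arises when the two residues at $\pm q_i$ combine in \eqref{zh-02-08-2};
\item the factor $2$ in $\Lambda'(1)=2S$.
\end{itemize}
I will recompute each of these from scratch. In particular, I will recheck the normalisation of the Lagrange interpolation \eqref{01-13-5} by verifying directly that $\p_{u_i}\Lambda(q_i)=1$, and I will recheck the value of $\mathrm{Res}_{p=\pm q_i}p^2\Lambda'(p)(p^2-q_i^2)^{-2}\,dp$. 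I will also fix the convention for $\mathrm{grad}_{\tilde\eta}$ so that it is the same one used for $e=\eta^\sharp(\omega_e)$ in \eqref{unit}, since the coefficient in \eqref{01-13-6a} is only meaningful relative to that convention.

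My first check is the simplest case $\ell=1$, $m=0$, where every quantity can be computed by hand; this will confirm the constant before the general computation. The exceptional critical point $q_\ell=0$ in the case $m=0$ is covered by the same formula, because $c_{\ell,0}=1$ has already been built into both \eqref{01-13-5} and $f_\ell$.

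\textbf{The Euler field \eqref{01-13-6b}.} This follows from homogeneity. Under $a_\al\mapsto sa_\al$ for all $\al$, the superpotential scales as $\Lambda\mapsto s\Lambda$. Its critical points $\pm q_i$ do not move, so $u_i\mapsto su_i$. This gives $\sum_iu_i\p_{u_i}=\sum_\al a_\al\p_{a_\al}$.

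For the flat coordinates, $\Lambda\mapsto s\Lambda$ sends $k_1\mapsto s^{1/(2(\ell-m))}k_1$ and $k_2\mapsto s^{1/(2m)}k_2$. The residue formulas \eqref{01-13-3a} and \eqref{01-13-3b} then show that $\tilde t^\al$ and $\tilde t^{\ell-m+\beta}$ are homogeneous of degrees $\frac{2(\ell-m-\al)+1}{2(\ell-m)}$ and $\frac{2(\ell-\beta)+1}{2m}$ respectively. The second equality in \eqref{01-13-6b} follows by Euler's theorem on homogeneous functions.
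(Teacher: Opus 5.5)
\textbf{The gap is the constant you postponed, and it cannot be closed because the stated coefficient is wrong.}

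Your reduction is correct. With $\mathrm{grad}_{\tilde\eta}\varphi=\sum_i f_i^{-1}\p_{u_i}\varphi\,\p_{u_i}$, which is the convention behind $e=\eta^\sharp(\omega_e)$, formula \eqref{01-13-6a} is equivalent to $\p_{u_i}S=-\tfrac12 f_iS$. Your way of computing $\p_{u_i}S$ is also essentially the paper's. The paper reads off $\p_{u_i}a_\al$ from \eqref{01-13-5} and sums them, using $2(\ell-m)a_1\prod_j(1-q_j^2)=\Lambda'(1)=2S$; evaluating at $p=1$ is the same computation, done more compactly.

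However, the factors you plan to recheck are all correct. The constant $c_{i,m}$ is right. The two residues at $\pm q_i$ sum to $\tfrac12\Lambda''(q_i)$ (to $\Lambda''(0)$ in the exceptional case). And $\Lambda'(1)=2S$. Finishing the computation gives
\[
\p_{u_i}S=\frac{c_{i,m}\,\Lambda'(1)}{(q_i^2-1)(1-q_i^2)\Lambda''(q_i)}=-2f_iS ,
\]
not $-\tfrac12 f_iS$. Hence $\mathrm{grad}_{\tilde\eta}\log S=-2\sum_i\p_{u_i}$, that is,
\[
\tilde e=-\tfrac12\,\mathrm{grad}_{\tilde\eta}\log(a_1+\dots+a_\ell).
\]
The coefficient $-2$ in \eqref{01-13-6a} is off by a factor of $4$. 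No convention for the gradient rescues it: using $\tilde\eta_{ij}$ in place of $\tilde\eta^{ij}$ gives $-2\sum_i f_i^2\,\p_{u_i}$, which is not even proportional to $\tilde e$.

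Your own test case already shows this. For $\ell=1$, $m=0$, $\Lambda=a_1(p^2-1)$ has the single critical point $q_1=0$ and $u_1=-a_1$, so $\tilde e=-\p_{a_1}$. On the other hand,
\[
\tilde\eta(\p_{a_1},\p_{a_1})=\mathop{\mathrm{Res}}_{p=0}\frac{\dd p}{2a_1p}=\frac1{2a_1},
\]
so $\mathrm{grad}_{\tilde\eta}\log a_1=2\p_{a_1}$.

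The value $-\tfrac12$ is also what the paper's later isomorphism theorem gives. There $z^1=a_1+\dots+a_\ell$, $e=-\mathrm{grad}_\eta\log z^1$, and $\tilde\eta=2\eta$ on cotangent vectors, so $h_*\tilde e=e$ yields $\tilde e=-\tfrac12\,\mathrm{grad}_{\tilde\eta}\log z^1$. The paper's proof in fact reaches your identity $\p_{u_i}S=-2c_{i,m}S/\bigl((q_i^2-1)^2\Lambda''(q_i)\bigr)$ (misprinted with $(q_i-1)^2$) and then simply asserts \eqref{01-13-6a}. The error lies in the stated constant, not in your method.

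Carried out, your argument proves the corrected formula with $-\tfrac12$, but not \eqref{01-13-6a} as written. Deferring the one nontrivial check to ``matching the coefficient'' is exactly where this goes unnoticed.

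\textbf{The Euler field.} Your homogeneity argument for \eqref{01-13-6b} is correct and is what the paper intends. One small point: by \eqref{01-13-3b}, $\tilde t^{\ell-m+\beta}$ has degree $\frac{2(m-\beta)+1}{2m}$. The exponent $\frac{2(\ell-\beta)+1}{2m}$ that you copied from the statement is an indexing slip.
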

\begin{proof}
    By using the relation
    \[\p_{u_i}\Lambda(p)=\frac{p^2-1}{p^{2m}}\left(\sum_{\al=1}^\ell (\p_{u_i}a_\al) p^{2(\ell-\al)}\right),\] 
    we obtain from \eqref{01-13-5} that
    \[\p_{u_i}a_\al=\frac{2c_{i,m}\times(-1)^{\al-1}(\ell-m)a_1\sigma_{\al-1}(\widehat{q_i^2})}{(q_i^2-1)\Lambda''(q_i)}.\]
    It follows that
    \begin{align*}
        &\p_{u_i}(a_1+\dots+a_\ell)=\frac{-2c_{i,m}(\ell-m)a_1}{(q_i-1)^2\Lambda''(q_i)}\prod_{j=1}^\ell(1-q_j^2)\\
        =&\frac{-c_{i,m}}{(q_i-1)^2\Lambda''(q_i)}\sum_{\al=0}^\ell2(\ell-m-\al)(a_{\al+1}-a_\al)\\
        =&\frac{-2c_{i,m}}{(q_i-1)^2\Lambda''(q_i)}(a_1+\dots+a_\ell),
    \end{align*}
    where we set $a_{\ell+1}=a_0=0$, thus \eqref{01-13-6a} is prove. The validity of \eqref{01-13-6b} comes from the homogeneity of $u_i$ and $\tilde{t}^\al$.
The lemma is proved.
\end{proof}

From Lemma \ref{25-12-17-1} we know that the 4-tensor $\tilde{c}_{\al\beta\gamma\xi}:=\partial_{\tilde{t}^\xi} \tilde{c}(\p_{\tilde{t}^\al},\p_{\tilde{t}^\beta},\p_{\tilde{t}^\gamma})$ is symmetric. We can also verify the homogeneity conditions (\ref{condition-3-1}) and (\ref{condition-3-2}), so $\mathcal M_{\ell,m}$ is a generalized Frobenius manifold with charge $d=1$. The intersection form of $\widetilde{\mathcal{M}}_{\ell,m}$ can be represented by
\begin{equation}
    \tilde{g}(\p',\p'')=\sum_{q:\,\Lambda'(q)=0}\mathop{\mathrm{Res}}_{p=q}\frac{\p'(\Lambda)\cdot\p''(\Lambda)}{\Lambda(p)\Lambda'(p)}\frac{\dd p}{(p^2-1)^2},
\end{equation}
which is defined on $\widetilde{\mathcal{M}}_{\ell,m}\setminus\Sigma_m$, where
\[\Sigma_m:=\{p\mid \Lambda(p)=0,\Lambda'(p)=0\}.\]
In terms of the canonical coordinates, we have
\[\tilde{g}(\dd u_i,\dd u_j)=\frac{u_i(q_i^2-1)^2\Lambda''(q_i)\delta_{ij}}{c_{i,m}}.\]

In what follows, we establish an isomorphism between the generalized Frobenius manifold structures defined on $\widetilde{\mathcal{M}}_{\ell,m}$ and $\mathcal{M}_{\ell,m}$, which is introduced in \eqref{01-18-a}.

\begin{thm}
   Let the map
   \[h\colon\widetilde{\mathcal{M}}_{\ell,m}\to\mathcal M_{\ell,m},\quad (a_1,\dots,a_\ell)\mapsto (z^1,\dots,z^\ell)\]
    be defined by
    \[z^j=[s^{\ell-j}]\left(\sum_{k=1}^\ell a_k(s-2)^{\ell-k}(s+2)^{k-1}\right),\]
    where $[s^{k}](f(s))$ stands for the coefficient of $s^k$ in the polynomial $f(s)$. Then $h$ gives an isomorphism between the generalized Frobenius manifold structures defined on $\widetilde{\mathcal{M}}_{\ell,m}$ and on $\mathcal{M}_{\ell,m}$.
\end{thm}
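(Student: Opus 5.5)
We follow the proof of Theorem \ref{9-1}. On both sides the charge is $d=1$. The unit and Euler vector fields are determined by the flat metric together with a potential function (for $e$) or by the degrees (for $E$). The multiplication is recovered from the pencil $(g,\eta)$ through \eqref{zh-06-02e}. It therefore suffices to prove
\[
h^*\tilde\eta=\eta,\qquad h^*\tilde g=g,\qquad h_*e=\tilde e,\qquad h_*E=\tilde E .
\]
Since all flat coordinates have positive degree, the identity $h^*\tilde g=g$ together with $h^*\tilde\eta=\eta$ fixes the structure constants. We identify $\Lambda$ with the Laurent polynomial coming from $P$.

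First set $p^2=\frac{s-2}{s+2}$, or equivalently $s=2\frac{1+p^2}{1-p^2}$. Then $p^{2(\ell-k)}(p^2-1)$ is proportional to $(s-2)^{\ell-k}(s+2)^{k-1}$ times a fixed power of $(s+2)$. Hence, up to a fixed factor,
\[
\frac{p^{2m}}{p^2-1}\Lambda(p)\quad\text{corresponds to}\quad Q(s)\,(s+2)^{-\ell+1},
\]
and $h$ is exactly the statement $Q(s)=\sum_k a_k(s-2)^{\ell-k}(s+2)^{k-1}$. Next, writing $\zeta^k=e^{2\pi i\xi^k}+e^{-2\pi i\xi^k}$, we note that $\lambda P(u)$ evaluated at $\lambda=0$ after the shift by $P_0$ is $Q(u)$. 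This suggests the intended model, in which $\Lambda(p)$ is a rescaling of
\[
\frac{Q(s)}{P_1(s)}=\frac{Q(s)}{(s+2)^m(s-2)^{\ell-m}} .
\]
Indeed, after the substitution this ratio becomes a Laurent polynomial in $p^2$ of exactly the form \eqref{01-18-b}, because $P_1$ contributes $p^{-2m}$ and the powers of $(1-p^2)$. So the first concrete step is to verify that $\Lambda(p(s))$ is a constant multiple of $Q(s)/P_1(s)$. We then check that under this correspondence the residues at $p=\infty$ and $p=0$ become residues at $s=2$ and $s=-2$. This matches the block splitting \eqref{eta(tau)} into sizes $\ell-m$ and $m$.

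Second, we prove $h^*\tilde\eta=\eta$. We change variable to $s$ in \eqref{01-13-1}. The measure $\frac{\dd p}{(p^2-1)^2}$ together with $\Lambda'$ transforms into a measure $\propto \frac{\dd s}{\text{(quadratic in }s)}$. Since $\Lambda'(p)$ and $\partial_s(Q/P_1)$ differ by the factor $\dd s/\dd p$, the formula becomes a sum of residues at the critical points of $Q/P_1$ of
\[
\frac{\partial' Q\,\partial'' Q}{P_1^2\,\partial_s(Q/P_1)}\,\frac{\dd s}{s^2-4}
\]
(times a constant). We compute this on the generating functions $\sum_j \dd z^j u^{\ell-j}$ by moving the contour to $s=u$, $s=v$, $s=\pm2$ and $s=\infty$. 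This should reproduce \eqref{zh-07-19-1}: the factors $\frac{u^2-4}{u-v}$ come from the residues at $s=u$ and $s=v$, and the term $-\ell(\cdots)$ comes from the residue at infinity. Alternatively, we may compare the flat coordinates directly. Expanding $\frac12\log\frac{p-1}{p+1}$ near $p=\infty$ and $p=0$ corresponds to local roots of $Q/P_1$ at $s=2$ and $s=-2$. This gives the $v^\alpha$, $t^\alpha$ of Lemmas \ref{w->v} and Theorem \ref{flat coordinates of C}, including the normalizations in Corollary \ref{zh-02-09-5} (the constants $2$ and $2(\ell-m)$ versus $4(\ell-m)$ need a rescaling check).

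Third, we prove $h^*\tilde g=g$, following Theorem \ref{9-1}. We parametrize $\Lambda$ by its zeros $p=\pm e^{i\phi}$, which corresponds to $s=-\zeta^k$, i.e. $Q(s)\propto\prod_k(s+\zeta^k)$ at $\lambda=0$. We compute $\partial\phi_\alpha/\partial u_i$ from \eqref{01-13-5} and get $\tilde g(\dd\phi_\alpha,\dd\phi_\beta)$ as a sum over critical points. We then move the contour to the poles at $e^{\pm i\phi_\alpha}$, $e^{\pm i\phi_\beta}$, $0$ and $\infty$, and expect to obtain $\delta_{\alpha\beta}$ up to the normalization $1/4\pi^2$, i.e. $a(\dd\xi^i,\dd\xi^j)=\delta_{ij}$. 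Finally, the unit vector: \eqref{01-13-6a} involves $a_1+\dots+a_\ell$, which is $\Lambda$ evaluated at $p^2=2$-type points. Under the change of variables it should equal $Q(0)$-type data, matching $-\eta^\sharp(\dd\log z^1)$. This needs a check that $a_1+\cdots+a_\ell$ maps to a multiple of the correct generator. The Euler fields agree by homogeneity. The main obstacle is the residue bookkeeping in the second and third steps, especially the extra critical point at $p=0$ when $m=0$ and the constant normalizations.
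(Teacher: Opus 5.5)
Your second step follows the paper's route for $\eta$: rewrite $\Lambda$ in $s$, express $\tilde\eta(\dd z^j,\dd z^k)$ through the canonical coordinates as a sum over critical points, resum by partial fractions/residues as in Lemma \ref{1 term}, and compare with \eqref{zh-07-19-1}. However, the identification you call the first concrete step is false, and the check would fail. Your own (correct) formula $\frac{p^{2m}}{p^2-1}\Lambda=Q(s)(s+2)^{1-\ell}$, combined with $p^2-1=-\frac{4}{s+2}$ and $p^{-2m}=\frac{(s+2)^m}{(s-2)^m}$, gives
\[
\Lambda(p(s))=-\frac{4\,Q(s)}{(s-2)^m(s+2)^{\ell-m}} .
\]
So it is $1/\bigl((s-2)^m(s+2)^{\ell-m}\bigr)$, not $1/P_1$, that contributes $p^{-2m}$. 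In particular $p=\infty$ goes to $s=-2$ and carries the pole of order $\ell-m$, whereas $P_1(s)=(s+2)^m(s-2)^{\ell-m}$ has its order-$(\ell-m)$ zero at $s=2$. Hence your residue computation reproduces \eqref{zh-07-19-1} with $P_1$ replaced by $P_2(s)=(s-2)^m(s+2)^{\ell-m}$, which is $P_1$ with $m$ replaced by $\ell-m$. In the coordinates $z^j$ defined by $h$ you therefore land on the pencil of $\mathcal M_{\ell,\ell-m}$, not $\mathcal M_{\ell,m}$. The missing ingredient is the equivalence of Remark \ref{01-18-e}. It can be realized by $s\mapsto -s$, $z^j\mapsto(-1)^jz^j$, which preserves the form of \eqref{generating of g lambda ij} and sends $P_1$ to $(-1)^\ell P_1(-s)=P_2(s)$. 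This is exactly how the paper concludes: it notes that $P_2$ is $P_1$ with $m$ replaced by $\ell-m$ and invokes that remark. Also, the comparison only gives the metrics up to a common constant (the paper records $h^*\eta=\frac12\tilde\eta$, $h^*g=\frac12\tilde g$). This is harmless for the multiplication \eqref{zh-06-02e}, but your reduction should ask for equality up to a common factor, not on the nose.

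Your third step does not work as stated. The zero $s=-\zeta^k=-2\cos 2\pi\xi^k$ corresponds to $p^2=-\cot^2\pi\xi^k$, i.e.\ $p=\pm i\cot\pi\xi^k$. It does not correspond to $p=\pm e^{i\phi}$: for real $\phi$ those points map to $s=2i\cot\phi$. So the computation of Theorem \ref{9-1} does not transfer. Moreover, $Q(s)=\sum_j z^js^{\ell-j}$ has degree $\ell-1$, so its zeros give only $\ell-1$ coordinates. The leading coefficient $z^1$ must be added, together with all its pairings. The paper avoids this entirely. Since $P=Q+\lambda P_1$, the intersection form $g=g_\lambda|_{\lambda=0}$ has generating function \eqref{generating of g lambda ij} with $P$ replaced by $Q$. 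The same critical-point computation, with the extra factor $u_i=\widetilde\Lambda(s_i)$ in $\tilde g(\dd u_i,\dd u_j)$, then gives \eqref{zh-02-09-3}, so $g$ is handled exactly like $\eta$. Finally, the unit check is immediate but not as you guess. The sum $a_1+\dots+a_\ell$ is the coefficient of $s^{\ell-1}$ in $\sum_k a_k(s-2)^{\ell-k}(s+2)^{k-1}$, i.e.\ exactly $z^1$. Equivalently, it is the value of $\frac{p^{2m}}{p^2-1}\Lambda$ at $p^2=1$, i.e.\ $s=\infty$; it is not $Q(0)=z^\ell$. This is what matches $\omega_e=-\dd\log z^1$.
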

\begin{proof}
Let $s=2\frac{1+p^2}{1-p^2}$, then $\Lambda(p)$ given in \eqref{01-18-b} becomes
\begin{equation}
    \widetilde{\Lambda}(s)=\Lambda(p(s))=\frac{\sum_{j=1}^\ell z^j s^{\ell-j}}{(s-2)^m(s+2)^{\ell-m}}=\frac{Q(s)}{P_2(s)},
\end{equation}
where $Q(s)$ is given in \eqref{01-18-d}, and $P_2(s)$ is just $P_1(s)$ with $m$ replaced by $\ell-m$, see \eqref{01-18-c} and Remark \ref{01-18-e}.

Denote by 
\[s_i=2\frac{1+q_i^2}{1-q_i^2}, i=1,\dots,\ell\] 
the critical points of $\widetilde{\Lambda}(s)$, then for $m=1,\dots,\ell-1$ we have
\begin{align*}
    \frac{\dd^2\widetilde\Lambda}{\dd s^2}(s_i)=\frac{\dd^2\widetilde\Lambda}{\dd p^2}(q_i)\frac{4}{(s_i-2)(s_i+2)^3},\quad
    \p_{u_i}\widetilde\Lambda(s)=\frac{(s^2-4)\frac{\dd\widetilde\Lambda}{\dd s}(s)}{(s_i^2-4)(s-s_i)\frac{\dd^2\widetilde\Lambda}{\dd s^2}(s_i)}.
\end{align*}

We can calculate the generating function of $\tilde{\eta}(\dd z^j,\dd z^k)$ as follows:
\begin{align*}
    &\sum_{j,k=1}^\ell\tilde{\eta}(\dd z^j,\dd z^k)r^{\ell-j}s^{\ell-k}=\sum_{i,j=1}^\ell\tilde{\eta}(\dd u_i,\dd u_j)\p_{u_i}\widetilde\Lambda(r)\p_{u_j}\widetilde\Lambda(s)P_2(r)P_2(s)\\
    =&\sum_{i=1}^\ell\frac{2(r^2-4)(s^2-4)\frac{\dd\widetilde\Lambda}{\dd s}(r)\frac{\dd\widetilde\Lambda}{\dd s}(s)}{(s_i^2-4)\frac{\dd^2\widetilde\Lambda}{\dd s^2}(s_i)(r-s_i)(s-s_i)}P_2(r)P_2(s)\\
    =&\sum_{i=1}^\ell\frac{2(r^2-4)(s^2-4)\frac{\dd\widetilde\Lambda}{\dd s}(r)\frac{\dd\widetilde\Lambda}{\dd s}(s)}{(s_i^2-4)\frac{\dd^2\widetilde\Lambda}{\dd s^2}(s_i)}P_2(r)P_2(s)\left(\frac{1}{r-s_i}-\frac{1}{s-s_i}\right)\frac{1}{s-r},
\end{align*}
    here we employ a similar argument used in the proof of Lemma \ref{1 term}, since $(s^2-4)\frac{\dd\Lambda}{\dd s}(s)P_2(s)$ is a polynomial. Furthermore, we have
    \begin{align}
        &\sum_{j,k=1}^\ell\tilde{\eta}(\dd z^j,\dd z^k)r^{\ell-j}s^{\ell-k}=\frac{2((r^2-4)\frac{\dd\widetilde\Lambda}{\dd s}(r)-(s^2-4)\frac{\dd\widetilde\Lambda}{\dd s}(s))}{r-s}P_2(r)P_2(s)\notag\\
        =&\,2\left(\frac{s^2-4}{s-r}\left(Q'(s)P_2(r)-Q(s)\frac{P_2'(s)}{P_2(s)}P_2(r)\right)-\frac{r^2-4}{s-r}\left(Q'(r)P_2(s)-Q(r)\frac{P_2'(r)}{P_2(r)}P_2(s)\right)\right)\notag\\
        =&\,2\Big(\frac{r^2-4}{r-s}(Q'(r)P_2(s)+Q(r)P_2'(s))-\frac{s^2-4}{r-s}(Q'(s)P_2(r)+Q(s)P_2'(r))\notag\\
        &-\ell(Q(r)P_2(s)+Q(s)P_2(r))\Big).\label{zh-02-09-2}
    \end{align}
    Similarly, we have
\begin{equation}
\sum_{j,k=1}^\ell\tilde{g}(\dd z^j,\dd z^k)r^{\ell-j}s^{\ell-k}=-2\ell Q(r)Q(s)+2\frac{r^2-4}{r-s}Q'(r)Q(s)-2\frac{s^2-4}{r-s}Q(s)Q'(s).\label{zh-02-09-3}
\end{equation}
By comparing the formulae \eqref{zh-02-09-2}, \eqref{zh-02-09-3}
with \eqref{zh-07-19-1} and \eqref{generating of g lambda ij} respectively, we obtain
\[h^*\eta=\frac{1}{2}\tilde{\eta},\quad h^*g_0=\frac{1}{2}\tilde{g}.\]
It is also easy to prove that
\[h_*\tilde{e}=e,\quad h_*\tilde{E}=E.\]

For the $m=0$ case, note that $s_\ell=2$ is not a critical point of $\widetilde{\Lambda}(s)$, so we have
\[\left.\frac{\dd^2\widetilde{\Lambda}}{\dd p^2}(p)\right|_{p=0}=\left.8\frac{\dd\widetilde{\Lambda}}{\dd s}(s)\right|_{s=2},\quad\p_{u_\ell}\widetilde{\Lambda}(s)=\frac{(s+2)\frac{\dd\widetilde{\Lambda}}{\dd s}(s)}{\left.4\frac{\dd^2\widetilde{\Lambda}}{\dd s^2}(s)\right|_{s=2}}.\]
Thus the generating functions of $\tilde{\eta}(\dd z^j,\dd z^k)$ and $\tilde{g}(\dd z^j,\dd z^k)$ can be calculated similarly as above, and we can also reach the conclusion of the theorem. The theorem is proved.
\end{proof}

\begin{rem}
By using the change of variable $p\to 1/p$ in the formulae \eqref{01-13-1} and \eqref{zh-02-09-1}, we see that the generalized Frobenius manifolds $\widetilde{\mathcal{M}}_{\ell,m}$ and $\widetilde{\mathcal{M}}_{\ell,\ell-m}$ are equivalent.
\end{rem}

\begin{rem}
Let us rescale the flat coordinates $\tilde t^1,\dots, \tilde t^\ell$ to introduce the new flat coordinates
\begin{align*}
t^\al&=2^{\frac{6(\ell-m)+2\al-1}{4(\ell-m)}}(\ell-m)\tilde{t}^\al,\quad \al=1,\dots,\ell-m-1,\\
t^\beta&=2^{\frac{6(\ell-\beta)+4m+3}{4 m}}m\tilde{t}^\beta,\quad \beta=\ell-m+1,\dots,\ell-1,\\
t^\ell&=2^{\frac{3}{4m}} \tilde{t}^\ell,\quad 
t^{\ell-m}=2^{\frac{4(\ell-m)-1}{4(\ell-m)}}\tilde{t}^{\ell-m}.
\end{align*}
Then in these new flat coordinates, the potential $\tilde{F}(t)$ for the Frobenius manifold $\widetilde{\mathcal{M}}_{\ell,m}$ coincides with the potential $F(t)$ for the Frobenius manifold $\mathcal{M}_{\ell,m}$ which is constructed in Section \ref{zh-02-09-4}; the components of the flat metric $(\tilde{\eta}^{\al\beta})$ are related with that of the flat metric $(\eta^{\al\beta})$
given in Corollary \ref{zh-02-09-5} by
\[\left(\tilde{\eta}^{\al\beta}\right)=\left(\sqrt{2}\,\eta^{\al\beta}\right).\]
\end{rem}
    
\section{The Cases of $(B_\ell, \omega_1)$ and $(D_\ell, \omega_1)$}\label{B, D-type}
	\subsection{The Invariant $\lambda$-Fourier Polynomial Ring}
 Let $R$ be the root system of type $B_\ell$ or $D_\ell$ realized in the $\ell$-dimensional Euclidean space $V$ with orthonormal basis $e_1,\dots, e_\ell$.
Take the simple roots as follows:
\begin{align*}
&B_\ell\ \mbox{case:}\quad \al_1=e_1-e_2,\dots, \al_{\ell-1}=e_{\ell-1}-e_\ell,
\al_\ell=e_\ell;\\
&D_\ell\ \mbox{case:}\quad \al_1=e_1-e_2,\dots, \al_{\ell-1}=e_{\ell-1}-e_\ell,
\al_\ell=e_{\ell-1}+e_\ell.
\end{align*}
Then the fundamental weights are given by
\begin{align*}
\omega_i&=\al_1+2\al_2+\dots+(i-1)\al_{i-1}+i (\al_i+\al_{i+1}+\dots+\al_\ell),\ i=1,\dots, \ell-1,\\
\omega_\ell&=\frac12(\al_1+2\al_2+\dots+\ell\al_\ell)
\end{align*}
for the $B_\ell$ case, and 
\begin{align*}
\omega_i&=\al_1+2\al_2+\dots+(i-1)\al_{i-1}+i (\al_i+\dots+\al_{\ell-2})
+\frac12 i (\al_{\ell-1}+\al_\ell),\ i=1,\dots,\ell-2,\\
\omega_{\ell-1}&=\frac12\left(\al_1+2\al_2+\dots+(\ell-2)\al_{\ell-2}+\frac12\ell\al_{\ell-1}+\frac12(\ell-2)\al_\ell\right),\\
\omega_\ell&=\frac12\left(\al_1+2\al_2+\dots+(\ell-2)\al_{\ell-2}+\frac12(\ell-2)\al_{\ell-1}+\frac12\ell\al_\ell\right)
\end{align*}
for the $D_\ell$ case.

Take $\omega=\omega_1$, then $\kappa=1$, and the numbers $\theta_i=(\omega_i,\omega_1)$ are given by
\begin{align*}
&B_\ell\ \mbox{case:}\quad \theta_i=1,\ \theta_\ell=\frac12,\quad i=1,\dots,\ell-1;\\
&D_\ell\ \mbox{case:}\quad \theta_i=1,\ \theta_{\ell-2}=\theta_\ell=\frac12,\quad i=1,\dots,\ell-2.
\end{align*}
We define $\xi^1,\dots, \xi^{\ell}$ by the relation
\[ c\omega + x^1\alpha_1^\vee + \dots + x^\ell\alpha_\ell^\vee=\xi^1 e_1+\dots+\xi^\ell e_\ell,\]
then the basic generators of $\mathscr A^{W}$ are given by
\[
y^i=\lm\sigma_i(\zeta^1, \dots, \zeta^\ell), \ y^\ell=\lm^{\frac12}\sigma_\ell(\tilde\zeta^1, \dots, \tilde\zeta^\ell),\quad i=1, \dots, \ell-1\]
for the $B_\ell$ case, and by
\begin{align*}
 y^i&=\lm\sigma_i(\zeta^1, \dots, \zeta^\ell), \quad i =1,\dots,\ell-2,\\
 y^{\ell-1}&=\frac12\lm^{\frac12}\left(\sigma_\ell(\tilde\zeta^1_+, \dots, \tilde\zeta^\ell_+)+\sigma_\ell(\tilde\zeta^1_-, \dots, \tilde\zeta^\ell_-)\right),\\
 y^\ell&=\frac12\lm^{\frac12}\left(\sigma_\ell(\tilde\zeta^1_+, \dots, \tilde\zeta^\ell_+)-\sigma_\ell(\tilde\zeta^1_-, \dots, \tilde\zeta^\ell_-)\right)
 \end{align*}
for the $D_\ell$ case. Here we use the notations
\begin{align*}
    &\zeta^j=e^{2\pi i\xi^j}+e^{-2\pi i \xi^j},\quad  j=1, \dots, \ell, \\
    &\tilde\zeta^j=e^{\pi i\xi^j}+e^{-\pi i\xi^j},\quad  j=1, \dots, \ell,\\
    &\tilde\zeta^j_\pm=e^{\pi i\xi^j}\pm e^{-\pi i\xi^j},\quad  j=1, \dots, \ell.
\end{align*}

\subsection{The Relation between the case of $(B_\ell,\omega_1),(D_\ell,\omega_1)$ and that of $(C_\ell,\omega_1)$}
Motivated by \cite{JGP}, we perform the change of coordinates 
\begin{align}\label{B->C}
    y^j&\mapsto \hat{y}^j=y^j, \quad j=1, \dots, \ell-1, \nonumber\\
    y^\ell&\mapsto \hat{y}^\ell=(y^\ell)^2-\sum_{k=1}^{\ell-1}2^{\ell-k}y^k-2^\ell\lambda, 
\end{align}
in the $(B_\ell,\omega_1)$ case, and the change of coordinates 
\begin{align}\label{D->C}
    y^j&\mapsto \hat{y}^j=y^j, \quad j=1, \dots, \ell-2, \nonumber\\
    y^{\ell-1}&\mapsto \hat{y}^{\ell-1}=y^{\ell-1}y^\ell-\frac{1}{4}\sum_{k=1}^{\ell-2}\left(2^{\ell-k}-(-2)^{\ell-k}\right)y^k-\frac{1}{4}\left(2^\ell-(-2)^\ell\right)\lambda, \\
    y^\ell&\mapsto\hat{y}^\ell=(y^\ell)^2+(y^{\ell-1})^2-\frac{1}{2}\sum_{k=1}^{\ell-2}\left(2^{\ell-k}+(-2)^{\ell-k}\right) y^k-\frac{1}{2}\left(2^\ell+(-2)^\ell\right)\lambda, \nonumber
\end{align}
in the $(D_\ell,\omega_1)$ case, then in these new coordinates the metric $g_\lambda$ coincides  
with the one given by \eqref{generating of g lambda ij} for the $(C_\ell,\omega_1)$ case. Thus, the generalized Frobenius manifold structures that we obtain in this way from $(B_\ell, \omega_1)$ and $(D_\ell,\omega_1)$ are isomorphic to the one that we obtain from $(C_\ell, \omega_1)$.

	\section{Conclusions}
	\label{sec5}
Starting from an irreducible reduced root system $R$ in the Euclidean space $V$ and a fixed weight $\omega$, we introduce the $W_a(R)$-invariant $\lm$-Fourier polynomial ring $\mathcal{A}^W$ in \cite{JTZ2025-1}, and constructed a   generalized Frobenius manifold structure on the orbit space of the associated affine Weyl group $W_a(R)$ under the assumption of the existence of a set of so called pencil generators of $\mathcal{A}^W$. In this paper, we construct pencil generators for the root systems of type $A_\ell, B_\ell, C_\ell$ and $D_\ell$ with the choice $\omega_\ell$ for $A_\ell$ and $\omega=\omega_1$ for the other root systems.
We also show that in the $A_\ell$ case the structure constants of the associated Frobenius algebra are quasi-homogeneous polynomials in the flat coordinates of the flat metric $\eta$, and in other cases they are rational functions of the flat coordinates. We expect that this construction of generalized Frobenius manifolds also works for other choices of the weight $\omega$, and also for the exceptional root systems of type $G_2, F_4, E_6, E_7$ and $E_8$, and we will consider these cases in subsequent publications. It is also interested to study the possibility of constructing generalized Frobenius manifold structures on the orbit space of Jacobi groups \cite{Bertola, Bertola-1, Bertola-2}.
	
%
\vskip 0.3cm 
\noindent{\bf{Acknowledgement.}}
This work is supported by NSFC No.\,12571266.


	\medskip

	\noindent Lingrui Jiang, 
	
	\noindent Department of Mathematical Sciences,  Tsinghua University \\ 
	Beijing 100084,  P.R.~China\\
	jlr24@mails.tsinghua.edu.cn
	\medskip

    \noindent Si-Qi Liu, 
	
	\noindent Department of Mathematical Sciences,  Tsinghua University \\ 
	Beijing 100084,  P.R.~China\\
	liusq@tsinghua.edu.cn
	\medskip
    
	 \noindent Yingchao Tian, 
	
	\noindent Department of Mathematical Sciences,  Tsinghua University \\ 
	Beijing 100084,  P.R.~China\\
	tianyc23@mails.tsinghua.edu.cn
	\medskip
	
	\noindent Youjin Zhang, 
	
	\noindent Department of Mathematical Sciences,  Tsinghua University \\ 
	Beijing 100084,  P.R.~China\\
	youjin@tsinghua.edu.cn
\end{document}